\documentclass[reqno]{amsart}
\usepackage[margin=1.15in]{geometry}
\usepackage{hyperref}
\usepackage{cite}
\usepackage{enumitem}
\usepackage{amssymb}

\theoremstyle{plain}
\newtheorem{theorem}{Theorem}
\newtheorem{proposition}[theorem]{Proposition}
\newtheorem{corollary}[theorem]{Corollary}
\newtheorem{lemma}[theorem]{Lemma}

\theoremstyle{definition}
\newtheorem{definition}{Definition}

\theoremstyle{remark}
\newtheorem{remark}{Remark}

\allowdisplaybreaks

\begin{document}

\author{Phuong Le}
\address{Phuong Le$^{1,2}$ (ORCID: 0000-0003-4724-7118)\newline
	$^1$Faculty of Economic Mathematics, University of Economics and Law, Ho Chi Minh City, Vietnam; \newline
	$^2$Vietnam National University, Ho Chi Minh City, Vietnam}
\email{phuongl@uel.edu.vn}

\subjclass[2020]{35J92, 35B53, 35B08, 35B33}
\keywords{$p$-Laplacian, Lane--Emden equation, Liouville theorems, stability outside a compact set, finite Morse index, Joseph--Lundgren exponent}

\title[Sharp Liouville thresholds and endpoint rigidity]{Sharp Liouville thresholds and endpoint rigidity for finite Morse index solutions of the $p$-Laplace Lane--Emden equation}

\begin{abstract}
We study $-\Delta_p u = |u|^{q-1}u$ in $\mathbb{R}^N$ with $N>p\ge 2$, for solutions that are stable outside a compact set, with no assumption of sign, boundedness or symmetry. Damascelli, Farina, Sciunzi and Valdinoci settled the subcritical range for $p>2$, and treated the supercritical range $p^*-1<q<q_c(N,p)$ only for radial solutions, stating that without radial symmetry they were not able to conclude. The obstruction is that the arguments available for $p=2$ rest on a monotonicity formula with no monotone analogue when $p\ne2$.

We remove the radial hypothesis: every $C^1$ weak solution stable outside a compact set is trivial when $p^*-1<q<q_c(N,p)$. The whole range $p-1<q<q_c(N,p)$ with $q\ne p^*-1$ is thereby settled, as it is for $p=2$ by Farina's theorem, and every finite Morse index solution in it is trivial. In its place we iterate the comparison principle against an explicit two-parameter supersolution, upgrading the decay forced by stability to that of the $p$-harmonic fundamental solution; this step uses no stability, only smallness of the self-similar decay, and is stated on its own. The upper threshold is sharp: for every $q>q_c(N,p)$ we exhibit positive bounded radial solutions stable outside a compact set. At the Sobolev endpoint, every $C^1$ solution stable outside a compact set has finite energy; for $p>2$ this removes the a priori $\mathcal D^{1,p}$ assumption from the low-Morse-index classification of Farina, Mercuri and Willem and answers their question on the Morse index of the Aubin--Talenti extremals: it is one. At the upper endpoint, $q_c(N,p)$ is the exact threshold for a nonzero homogeneous weak solution on $\mathbb R^N\setminus\{0\}$ to be stable outside a ball, and at the threshold there are exactly two.
\end{abstract}

\maketitle

\section{Introduction}\label{sec_intro}

Let $N>p>1$ and $q>p-1$. We consider the Lane--Emden equation driven by the $p$-Laplacian,
\begin{equation}\label{eq_main}
-\Delta_p u = |u|^{q-1}u \quad \text{in } \mathbb{R}^N,
\qquad \Delta_p u = \operatorname{div}\left(|\nabla u|^{p-2}\nabla u\right),
\end{equation}
and we study solutions that are stable outside a compact set. No sign condition, no boundedness assumption and no symmetry assumption are imposed. We write
\begin{equation}\label{eq_exponents}
p^*=\frac{Np}{N-p},\qquad p^*-1=\frac{N(p-1)+p}{N-p},\qquad
\alpha=\frac{p}{q-p+1},\qquad \gamma=\frac{N-p}{p-1}.
\end{equation}
The exponent $\alpha$ is the homogeneity of the scaling $u\mapsto \lambda^{\alpha}u(\lambda\,\cdot)$ that leaves \eqref{eq_main} invariant, and $\gamma$ is the decay rate of the fundamental solution of $-\Delta_p$. The Joseph--Lundgren exponent of the $p$-Laplacian is
\begin{equation}\label{eq_qc}
q_c(N,p)=
\begin{cases}
+\infty, & \text{if } (p-1)N\le p(p+3),\\[2mm]
\dfrac{
\begin{gathered}
\big((p-1)N-p\big)^2+p^2(p-2)-p^2(p-1)N\\[-1mm]
{}+2p^2\sqrt{(p-1)(N-1)}
\end{gathered}}
{(N-p)\big((p-1)N-p(p+3)\big)},
& \text{if } (p-1)N> p(p+3).
\end{cases}
\end{equation}
For $p=2$ this is the classical exponent of Joseph and Lundgren \cite{JosephLundgren1973}. For general $p>1$ a threshold playing the same role appeared earlier in the work of Cabr\'e and Sanch\'on on semi-stable singular power solutions of the Dirichlet problem on a ball \cite[Prop.~1.3]{CabreSanchon2007}, where it is denoted by $m_{cs}(p)$. One always has $q_c(N,p)>p^*-1$.

For $p=2$ the picture is complete and is due to Farina \cite{Farina2007}. Every $C^2$ solution of $-\Delta u=|u|^{q-1}u$ in $\mathbb{R}^N$ that is stable outside a compact set is trivial as soon as $q<q_c(N,2)$ and $q\ne \frac{N+2}{N-2}$, and both exclusions are necessary. At $q=\frac{N+2}{N-2}$ the Aubin--Talenti bubbles have Morse index one, and for $q\ge q_c(N,2)$ with $N\ge 11$ there are positive radial stable solutions. Earlier, Bahri and Lions \cite{BahriLions1992} had obtained the subcritical part of this statement for finite Morse index solutions.

For $p\ne 2$ the situation is different, and the difference is confined to the supercritical range. The reference point for the entire-space Liouville theory is the work of Damascelli, Farina, Sciunzi and Valdinoci \cite{DFSV2009}, where the same threshold $q_c(N,p)$ appears as the upper endpoint of their nonexistence range for stable solutions. Assuming $p>2$ throughout, they proved that every stable solution of \eqref{eq_main} is trivial when $p-1<q<q_c(N,p)$ \cite[Thm.~1.5]{DFSV2009}, and that every solution which is stable outside a compact set is trivial when $p-1<q<p^*-1$ \cite[Thm.~1.9]{DFSV2009}. In the supercritical range
\begin{equation}\label{eq_superrange}
p^*-1<q<q_c(N,p)
\end{equation}
they prove two things. First, every solution stable outside a compact set decays faster than the self-similar rate, namely $|x|^{p/(q-p+1)}|u(x)|\to0$ \cite[Thm.~1.10]{DFSV2009}. Second, using that decay, every \emph{radial} solution stable outside a compact set is trivial \cite[Thm.~1.11]{DFSV2009}. For a solution that is not assumed radial they state explicitly that they are not able to conclude. To the best of our knowledge, no subsequent result has removed this radial restriction in the full supercritical range \eqref{eq_superrange} for \eqref{eq_main}. Analogous restrictions also occur in related anisotropic extensions; see, for instance, \cite{FSV2024,LWY2025}.

The purpose of this paper is to remove that restriction.

Stability is also the driving hypothesis of a second and very active line of work, in which the question is regularity rather than nonexistence. On a bounded domain, Cabr\'e, Figalli, Ros-Oton and Serra \cite{CFRS2020} proved that stable solutions of $-\Delta u=f(u)$ with $f$ nonnegative are bounded, and therefore smooth, in dimension $N\le9$, by an interior estimate that depends only on the $L^1$ norm of the solution and not on the nonlinearity, and the dimension $9$ is optimal. For the $p$-Laplacian with $p\ge2$ the corresponding interior estimate is due to Cabr\'e, Miraglio and Sanch\'on \cite{CMS2022}, after Castorina and Sanch\'on \cite{CastorinaSanchon2015}. Their restriction on the dimension is $N<p+\frac{4p}{p-1}$, that is $(p-1)N<p(p+3)$, and this is precisely the condition under which $q_c(N,p)=+\infty$ in \eqref{eq_qc}. The two lines of work therefore stop at the same threshold, which appears once as a restriction on the dimension for an arbitrary nonlinearity and once as a restriction on the exponent for $f(u)=|u|^{q-1}u$. For finite Morse index solutions of the Lane--Emden equation itself, the regularity side was studied by D\'avila, Dupaigne and Farina \cite{DDF2011}.

\smallskip
Stability outside a compact set $K$ yields, by a test function argument going back to \cite{Farina2007} and carried out for the $p$-Laplacian in \cite[Prop.~1.4]{DFSV2009}, the family of estimates
\begin{equation}\label{eq_intro_estimate}
\int_{B_{2r}\setminus B_r}|u|^{q+\delta}\,dx \le C\, r^{\,N-\frac{p(q+\delta)}{q-p+1}},
\qquad \delta\in[1,\delta_0),\qquad
\delta_0=\frac{2q-(p-1)+2\sqrt{q(q-p+1)}}{p-1}.
\end{equation}
To reach a Liouville theorem one wants the finite energy conditions $|\nabla u|^p\in L^1(\mathbb{R}^N)$ and $|u|^{q+1}\in L^1(\mathbb{R}^N)$, after which the Pohozaev identity forces $u\equiv 0$ unless $q=p^*-1$. The choice $\delta=1$ in \eqref{eq_intro_estimate} is the only one that produces exactly those two quantities, and the corresponding exponent is
\[
N-\frac{p(q+1)}{q-p+1},
\]
which is negative precisely when $q<p^*-1$. This is the reason, and the only reason, why the results quoted above stop at the Sobolev exponent once the radial hypothesis is dropped.

For $p=2$ the supercritical range is reached by a different route. One first deduces from \eqref{eq_intro_estimate} that $|u(x)|=o(|x|^{-\alpha})$, and then one uses the monotonicity formula of Pacard type: the rescaled energy
\[
E(r)=r^{\frac{2(q+1)}{q-1}-N}\left(\frac12\int_{B_r}|\nabla u|^2-\frac{1}{q+1}\int_{B_r}|u|^{q+1}\right)
+\frac{1}{q-1}\,r^{\frac{4}{q-1}+1-N}\int_{\partial B_r}u^2\,d\sigma
\]
is nondecreasing, it tends to $0$ at both ends, hence it vanishes identically and $u$ is homogeneous of a negative degree, so $u\equiv 0$. This is the mechanism used by Dancer, Du and Guo \cite{DDG2011}, by D\'avila, Dupaigne, Wang and Wei \cite{DDWW2014} in the fourth order case, and by Chen and Liao \cite{ChenLiao2025} for the Baouendi--Grushin operator.

That mechanism has no counterpart for the $p$-Laplacian. Write $u_r(x)=r^{\alpha}u(rx)$, which solves \eqref{eq_main} again, and set
\[
E(r)=r^{\frac{p(q+1)}{q-p+1}-N}\left(\frac1p\int_{B_r}|\nabla u|^p-\frac{1}{q+1}\int_{B_r}|u|^{q+1}\right)
=\frac1p\int_{B_1}|\nabla u_r|^p-\frac{1}{q+1}\int_{B_1}|u_r|^{q+1}.
\]
Differentiating in $r$ and integrating by parts on $B_1$ gives
\begin{equation}
\frac{dE}{dr}=\int_{\partial B_1}|\nabla u_r|^{p-2}\left[\,r\,\dot u_r^{\,2}-\alpha\,u_r\,\dot u_r\,\right]d\sigma ,
\qquad \dot u_r=\partial_r u_r .
\end{equation}
When $p=2$ the second term inside the bracket is $-\frac{\alpha}{2}\frac{d}{dr}u_r^2$, and moving it to the left produces the boundary correction that makes the formula monotone. When $p\ne 2$ the weight $|\nabla u_r|^{p-2}$ itself depends on $r$, the product $|\nabla u_r|^{p-2}u_r\dot u_r$ is not an exact derivative in $r$, and no correction term of the same type is available. This prevents the standard boundary correction from producing a monotonicity formula and is one of the main obstacles in treating the supercritical range for $p\ne2$; to the best of our knowledge, the corresponding nonradial problem has remained open since \cite{DFSV2009}.

Our observation is that the monotonicity formula is not what the argument really needs. What it needs is finite energy, and finite energy can be obtained from a decay rate, provided the rate is the one of the fundamental solution of $-\Delta_p$ rather than the self-similar one. Since $q>p^*-1$ implies $q>\frac{N(p-1)}{N-p}$, we have
\[
\alpha<\gamma,
\]
and the gap between the two rates is exactly the room in which a bootstrap can operate. The bootstrap is driven by the weak comparison principle applied to the explicit two-parameter family
\[
w(x)=A|x|^{-\alpha}+B|x|^{-\gamma},
\]
for which $-\Delta_p w$ can be computed in closed form; see Lemma \ref{lem_super}. The second term is $p$-harmonic and contributes nothing to $-\Delta_p w$ beyond a factor larger than $1$ when $p\ge 2$, so it can be used freely to carry the boundary datum while the first term absorbs the nonlinearity. Iterating produces a sequence of radii and amplitudes whose bookkeeping closes exactly at the rate $|x|^{-\gamma}$, up to a power of $\log|x|$ which is harmless.

\smallskip
We work with weak solutions in the following sense. A function $u\in W^{1,p}_{\mathrm{loc}}(\mathbb{R}^N)\cap L^q_{\mathrm{loc}}(\mathbb{R}^N)$ is a weak solution of \eqref{eq_main} if
\[
\int_{\mathbb{R}^N}|\nabla u|^{p-2}\nabla u\cdot\nabla\varphi\,dx=\int_{\mathbb{R}^N}|u|^{q-1}u\,\varphi\,dx
\qquad\text{for every }\varphi\in C_c^1(\mathbb{R}^N).
\]
By the regularity theory of DiBenedetto \cite{DiBenedetto1983} and Tolksdorf \cite{Tolksdorf1984}, a locally bounded weak solution belongs to $C^{1,\beta}_{\mathrm{loc}}(\mathbb{R}^N)$ for some $\beta\in(0,1)$, so the hypothesis $u\in C^1(\mathbb{R}^N)$ made in Theorem \ref{thm_main} below is the same as local boundedness. Theorem \ref{thm_sing} concerns weak solutions that are not locally bounded, which is why the definition is not restricted to that class. The linearized quadratic form at $u$ is
\[
Q_u(\varphi)=\int_{\mathbb{R}^N}|\nabla u|^{p-2}|\nabla\varphi|^2\,dx
+(p-2)\int_{\mathbb{R}^N}|\nabla u|^{p-4}(\nabla u\cdot\nabla\varphi)^2\,dx
-q\int_{\mathbb{R}^N}|u|^{q-1}\varphi^2\,dx,
\]
where, when $p>2$, the integrands of the first two terms are understood to vanish on $\{\nabla u=0\}$, which is their continuous extension because both are bounded by $|\nabla u|^{p-2}|\nabla\varphi|^2$; for $p=2$ the second term vanishes identically and $Q_u$ is the classical form $\int|\nabla\varphi|^2-q\int|u|^{q-1}\varphi^2$. All three integrals are finite for every $\varphi\in C_c^1(\mathbb{R}^N)$: the first two because $|\nabla u|^{p-2}\in L^{\frac{p}{p-2}}_{\mathrm{loc}}(\mathbb{R}^N)$ when $p>2$, the case $p=2$ being immediate, and the third because $|u|^{q-1}\in L^{\frac{q}{q-1}}_{\mathrm{loc}}(\mathbb{R}^N)$.

\begin{definition}\label{def_stable}
Let $u$ be a weak solution of \eqref{eq_main} and let $K\subset\mathbb{R}^N$ be compact. We say that $u$ is \emph{stable outside $K$} if $Q_u(\varphi)\ge 0$ for every $\varphi\in C_c^1(\mathbb{R}^N\setminus K)$. We say that $u$ has \emph{finite Morse index} if there is an integer $k$ such that no subspace of $C_c^1(\mathbb{R}^N)$ of dimension larger than $k$ consists of functions $\varphi$ with $Q_u(\varphi)<0$ apart from $\varphi=0$, and we write $i(u)$ for the least such $k$, the \emph{Morse index} of $u$.
\end{definition}

\begin{remark}\label{rem_fmi}
Every finite Morse index solution is stable outside a compact set. Indeed, let $u$ have finite Morse index. The dimensions of the subspaces $Z\subset C_c^1(\mathbb{R}^N)$ such that $Q_u(\varphi)<0$ for every $\varphi\in Z\setminus\{0\}$ are bounded, so one of them, say $W$, has maximal dimension, the zero subspace being admissible. Since $W$ is finite dimensional and each of its elements has compact support, there is $R>0$ such that every element of $W$ is supported in $B_R$. Suppose $Q_u(\varphi)<0$ for some $\varphi\in C_c^1(\mathbb{R}^N\setminus\overline{B_R})$. Then $w$ and $\varphi$ have disjoint supports for every $w\in W$, so $Q_u(w+t\varphi)=Q_u(w)+t^2Q_u(\varphi)<0$ whenever $(w,t)\in W\times\mathbb{R}$ and $(w,t)\ne(0,0)$. Thus $W\oplus\mathbb{R}\varphi$ contradicts the maximality of $W$. Hence $Q_u\ge0$ on $C_c^1(\mathbb{R}^N\setminus\overline{B_R})$.
\end{remark}

\begin{theorem}\label{thm_main}
Let $N>p\ge 2$ and let $q$ satisfy
\[
p^*-1<q<q_c(N,p).
\]
Then every weak solution $u\in C^1(\mathbb{R}^N)$ of \eqref{eq_main} that is stable outside a compact set satisfies $u\equiv 0$.
\end{theorem}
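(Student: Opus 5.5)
The plan has four steps: get self-similar decay from stability, upgrade it to fundamental-solution decay by comparison, convert that into finite energy, and close with Pohozaev.

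\textbf{Step 1: self-similar decay.} We take as input the decay already forced by stability outside a compact set in the range \eqref{eq_superrange}, namely \cite[Thm.~1.10]{DFSV2009}:
\[
|x|^{\alpha}|u(x)|\to0 \quad\text{as } |x|\to\infty .
\]
It comes from the estimates \eqref{eq_intro_estimate} with $\delta$ close to $\delta_0$, combined with a local $L^\infty$--$L^{q+\delta}$ estimate for $p$-Laplace subsolutions on annuli. The requirement $q<q_c(N,p)$ enters exactly here: it guarantees that some admissible $\delta$ gives $q+\delta>N(q-p+1)/p$, so the rescaled annular norms vanish.

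\textbf{Step 2: bootstrap to the rate $|x|^{-\gamma}$.} This step uses no stability, only the smallness $\varepsilon(R)=\sup_{|x|\ge R}|x|^\alpha|u(x)|\to0$. Since $q>p^*-1$ gives $\alpha<\gamma$, we compare $|u|$ with the supersolution $w=A|x|^{-\alpha}+B|x|^{-\gamma}$ of Lemma~\ref{lem_super}.

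Outside $B_R$ we have $|u|^{q-1}u = c(x)u$ with $|c(x)|\le \varepsilon(R)^{q-1}|x|^{-p}$, so $u$ solves an equation with a small Hardy-type potential. For $p\ge2$, the closed form of $-\Delta_p w$ should show
\[
-\Delta_p w \ \ge\ \kappa A^{p-1}|x|^{-\alpha(p-1)-p}
\]
for some $\kappa>0$ depending on $\alpha,\gamma,N,p$; it is positive because $\alpha<\gamma$. This dominates $|x|^{-p}\varepsilon^{q-1} w^{p-1}$ once $\varepsilon$ is small and $A$ is chosen comparable to the boundary data. The choice is:
\begin{itemize}
\item $B$ carries the value of $|u|$ on $\partial B_R$;
\item $A$ is small, and handles the behavior at infinity, where $|u|=o(|x|^{-\alpha})$.
\end{itemize}
Applying the weak comparison principle on annuli $B_{R'}\setminus B_R$ with $R'\to\infty$, separately to $u$ and $-u$, gives $|u|\le w$ outside $B_R$.

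Iterating is the delicate part. At each stage we use $|u|^{q-1}\le w^{q-1}$ to improve the effective potential and push the self-similar part's amplitude $A$ down on dyadically growing radii. This yields a sequence $A_k\to0$ on $|x|\ge R_k$, and tracking the bookkeeping should give
\[
|u(x)|\le C|x|^{-\gamma}\log^m|x| .
\]

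\textbf{Step 3: finite energy.} From Step 2, $|u|^{q+1}\lesssim |x|^{-\gamma(q+1)}$, which is integrable because $\gamma(q+1)>N$ for $q>p^*-1$. For the gradient, interior $C^{1,\beta}$ estimates on annuli, applied to the rescaled solution, give
\[
|\nabla u(x)|\lesssim |x|^{-\gamma-1}\log^m|x| ,
\]
which is $p$-integrable since $(\gamma+1)p>N$. Thus $|\nabla u|^p, |u|^{q+1}\in L^1(\mathbb{R}^N)$.

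\textbf{Step 4: Pohozaev.} Test the equation with $u\,\phi_R$ and with $(x\cdot\nabla u)\,\phi_R$, where $\phi_R$ is a cutoff. The $x\cdot\nabla u$ test needs a regularization, since $u$ is only $C^{1,\beta}$; the standard Degiovanni--Musesti--Squassina Pohozaev for $C^1$ solutions suffices. Let $R\to\infty$; the error terms vanish thanks to the finite energy and the decay. This gives
\[
\Big(\tfrac{N-p}{p}-\tfrac{N}{q+1}\Big)\int|u|^{q+1}=0 ,
\]
and since $q\ne p^*-1$ we conclude $u\equiv0$.

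\textbf{Main obstacle.} I expect Step 2 to be the hardest: making the iteration close at the fundamental rate with only a logarithmic loss. The first thing I would try is to exploit the fact that for $p\ge2$ the $p$-harmonic part only increases $-\Delta_p w$, so $B$ can be enlarged freely at each stage without spoiling the supersolution inequality.
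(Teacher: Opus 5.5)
Your four steps follow the paper's own architecture (Proposition~\ref{prop_decay}; Proposition~\ref{prop_boot} with Corollary~\ref{cor_gradboot}; Lemma~\ref{lem_energy}; Pohozaev), and Steps 3 and 4 are fine. The gap is in Step 2, which is the new ingredient. It is only sketched (``should show'', ``should give''), and the comparison you set up does not follow from the weak comparison principle.

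You write the nonlinearity as a potential. For $p>2$ this has to be $|u|^{q-1}u=c\,|u|^{p-2}u$ with $c=|u|^{q-p+1}\le\varepsilon^{q-p+1}|x|^{-p}$; your $c\,u$ with $\varepsilon^{q-1}|x|^{-p}$ has the right homogeneity only for $p=2$. You then ask for $-\Delta_p w\ge c\,w^{p-1}$. But the operator $z\mapsto-\Delta_p z-c(x)|z|^{p-2}z$ is not monotone when $c\ge0$. Testing the difference of the two inequalities with $(u-w)^+$ leaves $\int c\,(u^{p-1}-w^{p-1})(u-w)^+\ge0$ on the right-hand side, and nothing follows. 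An additional argument is needed and you give none: Hardy's inequality plus smallness of $\varepsilon$ when $p=2$; for $p>2$, e.g.\ a Picone-type identity, or a weighted Hardy inequality using a lower bound on $|\nabla w|$.

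Moreover, in this formulation the right-hand side $c\,w^{p-1}$ grows with $B$. So the freedom to enlarge $B$ that you single out as your main tool is not available there. The iteration itself is also left undone, and that is exactly where the argument has to close.

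The paper's way out is to freeze the right-hand side. Under the inductive bound $|u|\le\varepsilon_n|x|^{-\alpha}$ on $\{|x|\ge R_n\}$, the function $|u|$ is a subsolution of $-\Delta_p z=f$ for the \emph{fixed} function $f=|u|^q\le\varepsilon_n^q|x|^{-\alpha q}$ (Lemma~\ref{lem_abs}; alternatively treat $\pm u$ directly). Since $\alpha(p-1)+p=\alpha q$, Lemma~\ref{lem_super} makes $w_n=A_n|x|^{-\alpha}+B_n|x|^{-\gamma}$, with $A_n=K^{-1/(p-1)}\varepsilon_n^{q/(p-1)}$, a supersolution of the same problem for \emph{every} $B_n>0$. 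The factor $(1+t)^{p-2}\ge1$ is where $p\ge2$ enters. Lemma~\ref{lem_comp} then applies with $B_n=\varepsilon_nR_n^{\gamma-\alpha}$ carrying the boundary value, and the gain per step is the superlinear exponent $q/(p-1)>1$.

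The paper then sets $\varepsilon_{n+1}=2A_n$ and $R_{n+1}=(\varepsilon_n/A_n)^{1/(\gamma-\alpha)}R_n$. It obtains $\varepsilon_nR_n^{\gamma-\alpha}=c_0\,2^n$ and $\log R_n\gtrsim(q/(p-1))^n$, whence the $(\log|x|)^\theta$ loss. Your dyadic radii also close once the right-hand side is frozen, and with a sharper result. Take $R_n=2^nR_0$ and $\varepsilon_{n+1}=A_n+2^{\alpha-\gamma}\varepsilon_n$. Then $\varepsilon_n$ decays geometrically, and $B_n=\varepsilon_nR_n^{\gamma-\alpha}$ stays bounded because $\prod_n\big(1+C\varepsilon_n^{q/(p-1)-1}\big)<\infty$. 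This gives $|u|\le C|x|^{-\gamma}$ with no logarithm.

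A smaller point on Step 1. \cite[Thm.~1.10]{DFSV2009} is stated only for $p>2$. Also, the rescaled norms that vanish are the $L^{s_1}$ norms with $s_1=N(q-p+1)/p$. They come from the borderline choice $\delta_1=s_1-q\in[1,\delta_0)$, which makes $\int_{|x|>2R_0}|u|^{s_1}$ finite and scale invariant. The $L^{q+\delta}$ norms with $q+\delta>s_1$ are only bounded after rescaling; their role is to put $|u_r|^{q-p+1}$ in $L^k$ with $k>N/p$.
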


Stability enters the proof of Theorem \ref{thm_main} only to produce a decay rate. Once that rate is available, the rest of the argument is independent of it, and it applies on a wider range of exponents.

\begin{theorem}\label{thm_decay}
Let $N>p\ge2$ and $q>\frac{N(p-1)}{N-p}$. There is $\varepsilon_*=\varepsilon_*(N,p,q)>0$ with the following property. Let $u\in C^1(\mathbb R^N)$ be a weak solution of \eqref{eq_main} such that
\[
\limsup_{|x|\to\infty}|x|^{\alpha}|u(x)|<\varepsilon_* .
\]
Then
\[
u\in\mathcal D^{1,p}(\mathbb R^N),\qquad
\int_{\mathbb R^N}|\nabla u|^p\,dx=\int_{\mathbb R^N}|u|^{q+1}\,dx<\infty,
\]
and if moreover $q\ne p^*-1$, then $u\equiv0$.
\end{theorem}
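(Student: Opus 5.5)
The proof runs a comparison bootstrap with the explicit supersolutions $w=A|x|^{-\alpha}+B|x|^{-\gamma}$ of Lemma \ref{lem_super}, then derives finite energy and applies Pohozaev. Since $q>\frac{N(p-1)}{N-p}$ we have $\alpha<\gamma$. Fix $\varepsilon<\varepsilon_*$ and $R_0$ with $|u(x)|\le\varepsilon|x|^{-\alpha}$ for $|x|\ge R_0$. Then $|u|^{q-1}\le \varepsilon^{q-1}|x|^{-p}$ there, so $u^\pm$ are subsolutions of $-\Delta_p v\le \varepsilon^{q-1}|x|^{-p}v^{q-p+1}\cdot v^{p-1}$. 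Concretely, $-\Delta_p u\le |u|^{q-1}u^+$.

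The first step is the computation for the supersolution. For $w=A|x|^{-\alpha}+B|x|^{-\gamma}$ with $A,B\ge0$, I expect Lemma \ref{lem_super} to give
\[
-\Delta_p w\ \ge\ c_0\,A^{p-1}|x|^{-\alpha(p-1)-p},\qquad c_0=\alpha^{p-1}\big((p-1)(\gamma-\alpha)\big)>0,
\]
using $p\ge2$ so that the $p$-harmonic term only enlarges the factor $|w'|^{p-2}$. I will compare this with $|u|^{q}\le \varepsilon^{q-1}|x|^{-\alpha(q-1)}|u|$. On an annulus $R_k<|x|<R_{k+1}$ I take $w$ with $w\ge|u|$ on both boundary spheres. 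Here $B$ carries the inner datum and $A$ is small on the outer sphere. The smallness $\varepsilon^{q-1}\ll c_0$ ensures $-\Delta_p w\ge |u|^{q-1}w$ wherever $w\ge|u|$. The weak comparison principle for $-\Delta_p$ applied to $u$ against $w$ (and to $-u$) on the annulus then gives $|u|\le w$. To avoid circularity in "wherever $w\ge |u|$", I instead compare against the source $f=|u|^{q-1}u$, treated as a fixed right-hand side bounded by $\varepsilon^{q}|x|^{-\alpha q}$. I need $-\Delta_pw\ge \varepsilon^q|x|^{-\alpha q}$, which is ensured by $A^{p-1}c_0\ge\varepsilon^q$, i.e. $A\sim\varepsilon^{q/(p-1)}$, noting $\alpha q=\alpha(p-1)+p$.

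Iterating $|u|\le A|x|^{-\alpha}$ with $A$ improved from $\varepsilon$ to $C\varepsilon^{q/(p-1)}$ shows that at each step the self-similar amplitude contracts geometrically, while the $B$-part accumulates. Letting the outer radius go to infinity, using $u\to0$ at infinity, gives $|u|\le A_k|x|^{-\alpha}+B_k|x|^{-\gamma}$ on $|x|\ge R_k$. The bookkeeping must choose $R_k$ growing geometrically so that $A_kR_k^{-\alpha}\lesssim B_kR_k^{-\gamma}$ and $B_k$ grows at most polynomially in $k$. This yields $|u(x)|\le C|x|^{-\gamma}(\log|x|)^m$. This bookkeeping, making the iteration close at the rate $\gamma$ rather than stalling at some intermediate rate, is the main obstacle. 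I would try first to pass through intermediate power rates $|x|^{-\beta}$, $\beta\in(\alpha,\gamma)$. Once $|u|\le C|x|^{-\beta}$, the source is bounded by $|x|^{-\beta q}$ with $\beta q>\beta(p-1)+p$. Radial supersolutions $|x|^{-\beta'}$ with $\beta'=\min(\gamma^-,(\beta q-p)/(p-1))$ then improve $\beta$ strictly, and finitely many steps reach $\gamma$ up to a logarithm at the last step.

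Given $|u|\lesssim|x|^{-\gamma}\log^m|x|$, interior $C^{1}$ estimates on dyadic annuli under scaling give $|\nabla u|\lesssim|x|^{-\gamma-1}\log^m|x|$. Then $|\nabla u|^p\sim|x|^{-p(N-1)/(p-1)}$ is integrable, since $p(N-1)/(p-1)>N$. Also $|u|^{q+1}$ is integrable, because $\gamma(q+1)>N$ follows from $q+1>p^*>\frac{N(p-1)}{N-p}$. Testing the equation with $u\eta_R$ for a cutoff $\eta_R$ gives $\int|\nabla u|^p=\int|u|^{q+1}$, since the error term $\int|\nabla u|^{p-1}|u||\nabla\eta_R|$ tends to $0$. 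Membership in $\mathcal D^{1,p}$ follows. The Pohozaev identity, obtained with the test field $x\cdot\nabla u\,\eta_R$ (justified for $C^1$ solutions by the standard Degiovanni--Musesti--Squassina approach, whose boundary terms vanish by the decay), gives $\frac{N-p}{p}\int|\nabla u|^p=\frac{N}{q+1}\int|u|^{q+1}$. Combining the two identities forces $u\equiv0$ unless $q+1=p^*$.
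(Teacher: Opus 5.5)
Your outline follows the paper's proof. You compare on exterior domains against $w=A|x|^{-\alpha}+B|x|^{-\gamma}$ from Lemma \ref{lem_super}, with $KA^{p-1}=\varepsilon^q$ so that $-\Delta_pw\ge\varepsilon^q|x|^{-\alpha q}\ge|u|^q$, and with $B$ carrying the datum on the inner sphere. Then come the scaled gradient bounds, the energy identity from testing with $u\eta_R$, and Pohozaev. Comparing $u$ and $-u$ separately with $w$ is a fine substitute for Lemma \ref{lem_abs}.

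\textbf{The gap.} The step you call the main obstacle, the bookkeeping, is left unproved. It is exactly what Steps 2--3 of the proof of Proposition \ref{prop_boot} do. Your fallback does not supply it. It presupposes a first gain $\beta>\alpha$ and cannot produce one, because $\frac{\beta q-p}{p-1}-\alpha=\kappa(\beta-\alpha)$ with $\kappa=\frac{q}{p-1}$. So $\beta=\alpha$ is a fixed point of your improvement map, and the smallness of $\varepsilon$ has to act through the amplitude iteration.

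\textbf{How the paper closes it.} The paper sets $\varepsilon_{n+1}=2A_n$ and takes $R_{n+1}$ to be the crossover radius where $B_n|x|^{-\gamma}=A_n|x|^{-\alpha}$. Telescoping gives $\varepsilon_nR_n^{\gamma-\alpha}=c_02^n$, so $B_n$ grows exponentially in $n$. But $-\log\varepsilon_n$ grows like $\kappa^n$, hence so does $\log R_n$, and therefore $2^n\le(C\log|x|)^{\theta}$ with $\theta=\log2/\log\kappa$.

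\textbf{Your geometric radii also work, and give more.} Take $R_{k+1}=\lambda R_k$ with $\lambda^{\alpha-\gamma}\le\frac14$, and choose $\varepsilon_0$ with $K^{-1/(p-1)}\varepsilon_0^{\kappa-1}\le\frac14$. Set $A_k=K^{-1/(p-1)}\varepsilon_k^{\kappa}$ and $B_k=\varepsilon_kR_k^{\gamma-\alpha}$, and compare on $\{|x|>R_k\}$. This yields $|u|\le\varepsilon_{k+1}|x|^{-\alpha}$ on $\{|x|\ge R_{k+1}\}$ with $\varepsilon_{k+1}=A_k+\lambda^{\alpha-\gamma}\varepsilon_k\le\frac12\varepsilon_k$. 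The quantity $D_k=\varepsilon_kR_k^{\gamma-\alpha}$ then satisfies
\[
D_{k+1}=D_k\Big(1+\lambda^{\gamma-\alpha}K^{-\frac1{p-1}}\varepsilon_k^{\kappa-1}\Big),
\]
which is a convergent product since $\varepsilon_k\le2^{-k}\varepsilon_0$ and $\kappa>1$. Because $A_kR_{k+1}^{\gamma-\alpha}+B_k=D_{k+1}$, for $R_k\le|x|\le R_{k+1}$ you get $|u(x)|\le D_{k+1}|x|^{-\gamma}\le(\sup_jD_j)\,|x|^{-\gamma}$. So $B_k$ stays bounded and no logarithm appears, which is sharper than \eqref{eq_pharmonic}.

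Do the comparison on the exterior domain with the condition at infinity, as in Lemma \ref{lem_comp}. On a finite annulus the improved $w$ does not dominate $\varepsilon|x|^{-\alpha}$ on the outer sphere.

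\textbf{One slip in the energy step.} You must deduce $\gamma(q+1)>N$ from $q>\frac{N(p-1)}{N-p}=\frac{N}{\gamma}$. Deducing it from $q+1>p^*$ fails for $\frac{N(p-1)}{N-p}<q<p^*-1$, which is part of the theorem's range. Also, $u\in L^{p^*}$, needed for $u\in\mathcal D^{1,p}(\mathbb R^N)$, comes from $\gamma p^*=\frac{Np}{p-1}>N$.
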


Here $\mathcal D^{1,p}(\mathbb R^N)=\{v\in L^{p^*}(\mathbb R^N):\nabla v\in L^p(\mathbb R^N)\}$. No stability is assumed in Theorem \ref{thm_decay}, and its range of exponents is strictly larger than that of Theorem \ref{thm_main}, since $\frac{N(p-1)}{N-p}<p^*-1$: it reaches below the Sobolev exponent, where a decay hypothesis replaces stability, and it has no upper restriction, the Joseph--Lundgren exponent entering only through Proposition \ref{prop_decay}. The smallness of the self-similar decay, rather than its vanishing, is all that the comparison bootstrap uses. Theorem \ref{thm_main} follows by feeding Theorem \ref{thm_decay} the decay that stability provides.

Combining Theorem \ref{thm_main} with the subcritical theorem of \cite{DFSV2009} gives the statement that matches Farina's theorem for $p=2$.

\begin{corollary}\label{cor_full}
Let $N>p\ge 2$ and let $q$ satisfy $p-1<q<q_c(N,p)$ and $q\ne p^*-1$. Then every weak solution $u\in C^1(\mathbb{R}^N)$ of \eqref{eq_main} that is stable outside a compact set satisfies $u\equiv 0$. In particular, every finite Morse index solution of \eqref{eq_main} in that range of exponents is trivial.
\end{corollary}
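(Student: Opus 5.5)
The corollary is a case split on $q$, with the two ranges covered by Theorem \ref{thm_main} and by the subcritical theorem of \cite{DFSV2009}. Let $u\in C^1(\mathbb{R}^N)$ be a weak solution of \eqref{eq_main} that is stable outside a compact set $K$, with $p-1<q<q_c(N,p)$ and $q\ne p^*-1$. Since $q\neq p^*-1$, exactly one of two cases occurs.

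\emph{Case $p-1<q<p^*-1$.} Here I invoke \cite[Thm.~1.9]{DFSV2009}, which gives $u\equiv0$ for solutions stable outside a compact set. That theorem is stated for $p>2$, so the case $p=2$ needs a separate word. For $p=2$ the class $C^1$ weak solutions coincides with the classical $C^2$ class by elliptic regularity: $u$ is locally bounded, the right-hand side $|u|^{q-1}u$ is then locally H\"older, and Schauder theory applies. Farina's theorem \cite{Farina2007} then covers the case. I also check that the notion of stability in Definition \ref{def_stable}, with test functions in $C^1_c$, matches the one used in \cite{DFSV2009}, where test functions are $C^1_c$ or $W^{1,p}$ with compact support. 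This follows by a density argument: every term of $Q_u$ is continuous under $C^1$ convergence with supports in a fixed compact set outside $K$, because $u\in C^1$ makes the weights locally bounded.

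\emph{Case $p^*-1<q<q_c(N,p)$.} This is exactly the hypothesis of Theorem \ref{thm_main}, which applies directly and gives $u\equiv0$.

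Finally, for the "in particular" statement, Remark \ref{rem_fmi} shows that every finite Morse index solution is stable outside a compact set, namely $\overline{B_R}$, so the first part applies. The only point needing care is the mismatch of hypotheses with \cite{DFSV2009}, namely the restriction $p>2$ and the precise class of test functions; as explained above, this is handled by Farina's theorem and by density.
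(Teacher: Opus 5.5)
Your proposal is correct and follows the paper's proof: the same case split, using \cite{DFSV2009} for $p>2$ in the subcritical range, Farina's theorem via Schauder regularity when $p=2$, Theorem \ref{thm_main} for $p^*-1<q<q_c(N,p)$, and Remark \ref{rem_fmi} for the finite Morse index statement. The differences are cosmetic: the paper applies Farina's theorem to the whole range when $p=2$, and it does not spell out the test-function density check that you add.
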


The Sobolev exclusion in Corollary \ref{cor_full} is necessary. At $q=p^*-1$ the functions
\begin{equation}\label{eq_bubble}
u_{\lambda,x_0}(x)=\pm\left(\frac{\lambda^{\frac{1}{p-1}}N^{\frac1p}\left(\frac{N-p}{p-1}\right)^{\frac{p-1}{p}}}
{\lambda^{\frac{p}{p-1}}+|x-x_0|^{\frac{p}{p-1}}}\right)^{\frac{N-p}{p}},
\qquad \lambda>0,\ x_0\in\mathbb{R}^N,
\end{equation}
are nontrivial solutions of finite Morse index, and they are stable outside a ball by \cite[Prop.~10.1]{DFSV2009}; their Morse index is computed in Theorem \ref{thm_index} below. The upper threshold $q_c(N,p)$ has a different sharpness mechanism in the larger weak class allowing an isolated singularity. For $q>p^*-1$ put
\begin{equation}\label{eq_cs}
c_s=\Big[\alpha^{p-1}(p-1)(\gamma-\alpha)\Big]^{\frac{1}{q-p+1}},
\qquad
u_{x_0}(x)=c_s\,|x-x_0|^{-\alpha},\quad x_0\in\mathbb{R}^N .
\end{equation}

\begin{theorem}\label{thm_sing}
Let $N>p\ge2$ and $q>p^*-1$. Then every $u_{x_0}$ is a positive weak solution of \eqref{eq_main} belonging to $W^{1,p}_{\mathrm{loc}}(\mathbb{R}^N)\cap C^\infty(\mathbb{R}^N\setminus\{x_0\})$, and the following are equivalent:
\begin{enumerate}[label=(\roman*)]
\item $Q_{u_{x_0}}(\varphi)\ge0$ for every $\varphi\in C_c^1(\mathbb{R}^N\setminus\{x_0\})$;
\item $u_{x_0}$ is stable outside some compact set;
\item $q\ge q_c(N,p)$.
\end{enumerate}
\end{theorem}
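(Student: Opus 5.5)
First I will verify that $u_{x_0}$ is a weak solution; by translation take $x_0=0$ and write $u=c_s r^{-\alpha}$. For a radial power $r^{-\alpha}$ one has $|u'|^{p-2}u'=-(c_s\alpha)^{p-1}r^{-(\alpha+1)(p-1)}$, and so
\[
-\Delta_p u = (c_s\alpha)^{p-1}\big(N-1-(\alpha+1)(p-1)\big)\,r^{-(\alpha+1)(p-1)-1}.
\]
The exponent equals $\alpha q$ because $\alpha(q-p+1)=p$. The constant equals $(c_s\alpha)^{p-1}(p-1)(\gamma-\alpha)=c_s^q$ by the definition of $c_s$, and it is positive since $\alpha<\gamma$. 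Thus the equation holds classically on $\mathbb R^N\setminus\{0\}$.

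Next, $q>p^*-1$ gives $(\alpha+1)p<N$, so $|\nabla u|^p\in L^1_{\rm loc}$. It also gives $\alpha q<N$, so $u^q\in L^1_{\rm loc}$, and moreover $|\nabla u|^{p-1}r^{N-1}\to0$ as $r\to0$. Cutting out $B_\epsilon$ in the weak formulation and letting $\epsilon\to0$, the boundary flux vanishes. This yields the weak equation on all of $\mathbb R^N$, and with it $u\in W^{1,p}_{\rm loc}\cap L^q_{\rm loc}$.

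For the stability claim, since $\nabla u=u'(r)\,x/r$ we get $|\nabla u|^{p-4}(\nabla u\cdot\nabla\varphi)^2=|\nabla u|^{p-2}(\partial_r\varphi)^2$. Hence
\[
Q_u(\varphi)=(c_s\alpha)^{p-2}\int r^{-(\alpha+1)(p-2)}\big(|\nabla\varphi|^2+(p-2)(\partial_r\varphi)^2\big)-qc_s^{q-1}\int r^{-\alpha(q-1)}\varphi^2 .
\]
Both weights are homogeneous of the same degree, since $(\alpha+1)(p-2)+2=\alpha(q-1)$, so $Q_u$ is dilation-invariant. Dilation invariance gives (i)$\Rightarrow$(ii) trivially, since (i) is stability outside the compact set $\{0\}$. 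It also gives (ii)$\Rightarrow$(i): a test function supported in $\mathbb R^N\setminus\{0\}$ can be rescaled by $\varphi(\lambda\cdot)$ with $\lambda$ small to be supported outside any ball, and $Q_u$ only gets multiplied by a positive power of $\lambda$. It remains to prove (i)$\Leftrightarrow$(iii).

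For this I will decompose $\varphi$ in spherical harmonics and write $\partial_r\varphi$ and the tangential gradient separately. The angular part only adds nonnegative terms $\int r^{-(\alpha+1)(p-2)-2}|\nabla_\theta\varphi|^2$, and the radial coefficient is $p-1$. So (i) is equivalent to the weighted one-dimensional Hardy inequality
\[
(p-1)(c_s\alpha)^{p-2}\int_0^\infty r^{N-1-(\alpha+1)(p-2)}\psi'^2\,dr\ \ge\ qc_s^{q-1}\int_0^\infty r^{N-3-(\alpha+1)(p-2)}\psi^2\,dr .
\]
Indeed, necessity follows by testing radial functions, and sufficiency by applying the inequality to each mode and using the nonnegativity of the angular term. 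Writing $m=N-2-(\alpha+1)(p-2)$, the sharp constant is $(m/2)^2$, and it is not attained but approached by cut-offs of $r^{-m/2}$; this requires $m>0$, which I will check. Using $c_s^{q-p+1}=\alpha^{p-1}(p-1)(\gamma-\alpha)$, condition (i) becomes
\[
(p-1)\frac{m^2}{4}\ \ge\ q\,\alpha(p-1)(\gamma-\alpha),\qquad\text{i.e.}\qquad \big(N-2-(\alpha+1)(p-2)\big)^2\ge 4q\alpha(\gamma-\alpha).
\]

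The main obstacle is the final algebra. I must show that this polynomial inequality in $q$ (through $\alpha=p/(q-p+1)$), restricted to $q>p^*-1$, holds exactly when $(p-1)N\le p(p+3)$ fails in the appropriate way, namely when $q\ge q_c(N,p)$ as given in \eqref{eq_qc}. My plan is to multiply by $(q-p+1)^2$ to obtain a quadratic in $q$ and identify $q_c$ as its larger root. For $q=p^*-1$, equivalently $\alpha+1=\gamma... $ adjusted, the inequality fails. As $q\to\infty$ one has $\alpha\to0$, and the inequality holds iff $(N-p)^2>0$-type asymptotics, with the sign of the leading coefficient $(N-p)((p-1)N-p(p+3))$ deciding whether the set is empty (the case $q_c=\infty$) or a half-line $[q_c,\infty)$. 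I would cross-check this against the classical $p=2$ case and the Cabré--Sanchón value.
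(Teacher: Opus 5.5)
Your proof is correct. The weak-solution part, the formula for $Q_{u_s}$ and the implication (iii)$\Rightarrow$(i) are the paper's argument. The spherical-harmonic decomposition is unnecessary: discarding the angular term and applying the one-dimensional inequality on each ray is exactly Lemma~\ref{lem_hardy} with $\sigma=p-2$.

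\textbf{Where you differ: the converse.} The paper gets (ii)$\Rightarrow$(iii) from Theorem~\ref{thm_qc_hom}(i), the classification of all $(-\alpha)$-homogeneous profiles stable outside a ball. Inside that proof, stability is first upgraded to the punctured space by the same dilation you use. Then the profile equation on $\mathbb S^{N-1}$ gives $I_1+\alpha mI_0=I_q$, and the test functions $r^{-\mu}\eta(\log r)\psi(\theta)$ give $qI_q\le(p-1)(\mu^2I_0+I_1)$. You instead prove (ii)$\Rightarrow$(i) by dilation and (i)$\Rightarrow$(iii) directly from the optimality of the Hardy constant, testing with radial cut-offs of $r^{-m/2}$. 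In the dilation step the factor is $\lambda^{-m}$, a negative power of $\lambda$ but positive, which is all you use. Since $\mu=m/2$ and $\psi\equiv c_s$ here, the paper's test function reduces to yours, so on this profile the two arguments are the same computation. Your route is shorter, self-contained, and makes (i)$\Leftrightarrow$(iii) a single sharp-constant statement. The paper's route is longer for this one profile, but the same argument also shows that no nonzero homogeneous profile, of any sign and without symmetry, is stable outside a ball when $q<q_c(N,p)$, and gives uniqueness at $q=q_c(N,p)$.

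\textbf{What is still missing: the algebra.} This is Lemma~\ref{lem_algebra}, and your sketch does not complete it.
\begin{enumerate}[label=(\alph*)]
\item Your remark at $q=p^*-1$ should read: there $\alpha=\frac{N-p}{p}$, $m=\frac{2(N-p)}{p}$, and $m^2-4q\alpha(\gamma-\alpha)=-\frac{4(N-p)}{p-1}<0$.
\item Your two facts (failure at $q=p^*-1$ and the sign of $(N-p)\big((p-1)N-p(p+3)\big)$) suffice if you work in the variable $\alpha$. Since $q\alpha=p+(p-1)\alpha$ and $m=N-p-(p-2)\alpha$, the function $f(\alpha)=m^2-4q\alpha(\gamma-\alpha)$ is a convex quadratic with leading coefficient $(p-2)^2+4(p-1)=p^2$. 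Its value at zero is $f(0)=\frac{N-p}{p-1}\big((p-1)N-p(p+3)\big)$. Hence on $\big(0,\frac{N-p}{p}\big)$ the set $\{f\ge0\}$ is empty when $f(0)\le0$, and an interval $(0,\alpha_c]$ otherwise.
\item If instead you clear denominators in $t=q-p+1$, as you propose, you get $H(t)=(p-1)t^2f(p/t)$. When its leading coefficient is negative, you also need $H(0)=p^4(p-1)>0$ to rule out a window where $H>0$ to the right of $p^2/(N-p)$. Your sketch does not mention this.
\item In either form you must still match the root with \eqref{eq_qc}. One quarter of the discriminant of $H$ is $4p^4(p-1)(N-1)$, which is where $\sqrt{(p-1)(N-1)}$ comes from.
\end{enumerate}
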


Theorem \ref{thm_sing} exhibits, for every $q\ge q_c(N,p)$, an $N$-parameter family of nontrivial positive weak solutions of \eqref{eq_main} whose quadratic form is nonnegative away from the singular point. These profiles are not in the $C^1(\mathbb R^N)$ class of Theorem \ref{thm_main}; regular counterexamples are supplied instead by Corollary \ref{cor_radialsharp} below, for every $q>q_c(N,p)$. For $p=2$ the equivalence of (i) and (iii) is the classical Joseph--Lundgren stability criterion. For general $p>1$, Cabr\'e and Sanch\'on had already met the same threshold in their study of the singular function $U^{\#}(x)=|x|^{-p/(m-p+1)}-1$ on the unit ball, which they prove to be a semi-stable solution of the associated Dirichlet problem exactly when $m\ge m_{cs}(p)$ \cite[Prop.~1.3]{CabreSanchon2007}. Theorem \ref{thm_sing} is recorded here in the normalization and whole-space framework of \eqref{eq_main}, and its direct weighted-Hardy proof will also be useful for Proposition \ref{prop_radial}. We make no claim of a regular $C^1$ counterexample at the exact endpoint $q=q_c(N,p)$ when $p>2$.

Theorem \ref{thm_sing} concerns one profile. In fact $q_c(N,p)$ is the exact threshold for the existence of \emph{any} stable homogeneous profile, with no sign assumption, and at the threshold the profiles of \eqref{eq_cs} are the only ones. Call $U$ \emph{$(-\alpha)$-homogeneous} if $U(\lambda x)=\lambda^{-\alpha}U(x)$ for every $\lambda>0$.

\begin{theorem}[Homogeneous profiles and the exponent $q_c$]\label{thm_qc_hom}
Let $N>p\ge2$ and $q>p^*-1$. Let $U\in W^{1,p}_{\mathrm{loc}}(\mathbb R^N\setminus\{0\})\cap L^{q+1}_{\mathrm{loc}}(\mathbb R^N\setminus\{0\})$ be a $(-\alpha)$-homogeneous weak solution of \eqref{eq_main} in $\mathbb R^N\setminus\{0\}$ such that $Q_U(\varphi)\ge0$ for every $\varphi\in C_c^1(\mathbb R^N\setminus\overline{B_R})$ and some $R>0$. Then
\begin{enumerate}[label=\rm(\roman*)]
\item if $q<q_c(N,p)$, then $U\equiv0$;
\item if $q=q_c(N,p)$, then $U\equiv0$ or $U(x)=\pm c_s|x|^{-\alpha}$, with $c_s$ given by \eqref{eq_cs}.
\end{enumerate}
\end{theorem}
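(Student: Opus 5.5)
We write $U(x)=r^{-\alpha}\psi(\theta)$ with $r=|x|$, $\theta\in S^{N-1}$, and reduce both the equation and the stability inequality to statements on the sphere. Since $U$ is homogeneous, the outer stability transfers to all of $\mathbb R^N\setminus\{0\}$ by scaling: if $\varphi\in C_c^1(\mathbb R^N\setminus\{0\})$, then $\varphi_\lambda(x)=\varphi(x/\lambda)$ is supported outside $\overline{B_R}$ for $\lambda$ large. Both $Q_U(\varphi_\lambda)$ and $Q_U(\varphi)$ scale by the same power of $\lambda$, because $|\nabla U|^{p-2}$ is $(-(\alpha+1)(p-2))$-homogeneous and $|U|^{q-1}$ is $(-\alpha(q-1))$-homogeneous, and these match since $(\alpha+1)(p-2)+2=\alpha(q-1)$. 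So $Q_U\ge0$ on $C_c^1(\mathbb R^N\setminus\{0\})$.

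We then test with $\varphi(x)=r^{-\beta}\eta(r)\,\zeta(\theta)$, where $\eta$ is a radial cutoff that is logarithmically spread over $[\varepsilon,1/\varepsilon]$. The choice $\beta=(N-2-(\alpha+1)(p-2))/2$ makes the radial integral critical, that is, it makes $\int r^{-(\alpha+1)(p-2)-2\beta+N-3}\,dr=\int dr/r$. Dividing by $|\log\varepsilon|$ and letting $\varepsilon\to0$ yields a weighted Hardy-type inequality on the sphere. In its $\theta$-independent form $\zeta\equiv1$, the gradient is $\nabla(r^{-\beta})=-\beta r^{-\beta-1}e_r$, and the inequality reads
\[
\int_{S^{N-1}}\Big(\beta^2 a(\theta)+(p-2)\beta^2 a(\theta)\frac{(\nabla U\cdot e_r)^2}{|\nabla U|^2}\Big)d\theta\ \ge\ q\int_{S^{N-1}}|\psi|^{q-1}d\theta .
\]
Here $a(\theta)=|\nabla U|^{p-2}$ evaluated at $r=1$, and $\nabla U\cdot e_r=-\alpha\psi$ there. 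One should also keep a general $\zeta$, or $\zeta=|\psi|^{s}$, to extract more.

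The second ingredient is an identity for $\psi$. Testing the equation against $U\phi$ with radial $\phi$, in the same logarithmic fashion, gives
\[
\int_{S^{N-1}}|\psi|^{q+1}=\int_{S^{N-1}}a\big(\alpha^2\psi^2+|\nabla_\theta\psi|^2\big)-c\int_{S^{N-1}} a\,\psi^2 ,
\]
with an explicit constant $c$ coming from the boundary terms. Using $|\nabla U|^2=\alpha^2\psi^2+|\nabla_\theta\psi|^2$ and the bound $(\nabla U\cdot e_r)^2\le|\nabla U|^2$, we aim to combine the stability inequality, tested with $\zeta=|\psi|^{\frac{q+1}{2}-\dots}$ or simply $\zeta=\psi$-type choices, with this identity. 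The goal is to bound $q\int a\,\psi^2\cdot(\cdot)$ from above by a quantity which, after inserting the radial profile computation, becomes
\[
\Big(\beta^2(p-1)-q\,\alpha^{p-1}(p-1)(\gamma-\alpha)\cdot\tfrac{1}{c_s^{q-1}}\cdots\Big)\int(\cdots)\le0 .
\]
The coefficient here is exactly the one appearing in Theorem \ref{thm_sing}. For $\psi\equiv c_s$ the stability criterion reduces to $\beta^2(p-1)\ge q\,c_s^{q-1}\alpha^{-(p-2)}\cdots$, whose sign change is precisely at $q=q_c$; we take this from the proof of Theorem \ref{thm_sing}, the weighted Hardy constant. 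We then want an inequality of the form $\Lambda(q)\int w\le0$ with $w\ge0$ and $\Lambda(q)>0$ for $q<q_c$, and $\Lambda(q_c)=0$ with equality only when the inequalities used are equalities.

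The main obstacle is making the comparison work for nonconstant and sign-changing $\psi$. There the angular gradient enters both the weight $a$ and the term $|\nabla_\theta\psi|^2$, and the factor $(p-2)$ from the anisotropic term must be controlled. I would first try the test function $\varphi=U\,r^{\alpha-\beta}\eta$, i.e.\ $\zeta=\psi$, which makes the quadratic form directly comparable to the equation tested with $U$. The angular terms then appear with a favorable sign, since $(p-2)\ge0$ and $(\nabla U\cdot\nabla\varphi)^2$ can be expanded exactly. For the equality case at $q=q_c$, one traces back: equality forces $\nabla_\theta\psi\equiv0$, so $\psi$ is constant, and the equation then forces $\psi\in\{0,\pm c_s\}$. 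Part (i) follows because for $q<q_c$ the strict inequality forces $\int|\psi|^{q+1}=0$.
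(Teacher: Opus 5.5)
\textbf{There is a genuine gap: the decisive combination step is missing, and your description of the anisotropic term is backwards.}

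Your setup matches the paper. Homogeneity upgrades stability to $C_c^1(\mathbb R^N\setminus\{0\})$. Your choice $\zeta=\psi$, i.e.\ $\varphi=r^{-\mu}\eta(\log r)\psi(\theta)$, is exactly the paper's test function; your $\beta$ is the paper's $\mu=\frac{N-2-(\alpha+1)(p-2)}{2}$, and the paper reserves $\beta$ for $(\alpha+1)(p-2)$.

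The proposal stops where the proof begins. The step you call the main obstacle is left as ``I would first try''. The constant $c$ is never computed. The target coefficient you display is written for $\psi\equiv c_s$ and trails off in ellipses, whereas you need an inequality whose coefficient depends only on $N,p,q$ and holds for every $\psi$.

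The claim that the angular terms come ``with a favorable sign, since $p-2\ge0$'' is backwards. The term $(p-2)|\nabla U|^{p-4}(\nabla U\cdot\nabla\varphi)^2$ sits on the positive side of $Q_U$, so it must be bounded from above.

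Here is the missing step. Put $A=(\alpha^2\psi^2+|\nabla_\theta\psi|^2)^{\frac{p-2}{2}}$, $I_0=\int A\psi^2$, $I_1=\int A|\nabla_\theta\psi|^2$ and $I_q=\int|\psi|^{q+1}$.
\begin{enumerate}
\item Your identity is $I_q=I_1+\alpha m I_0$ with $m=(p-1)(\gamma-\alpha)$, so $c=\alpha^2-\alpha m$.
\item Cauchy--Schwarz gives $\mathcal M(\nabla U)\nabla\varphi\cdot\nabla\varphi\le(p-1)|\nabla U|^{p-2}|\nabla\varphi|^2$. After the logarithmic averaging, stability becomes $qI_q\le(p-1)(\mu^2I_0+I_1)$.
\item Subtracting $q$ times the identity gives
\[
(q-p+1)\,I_1\le\big[(p-1)\mu^2-q\alpha m\big]\,I_0=\frac{H(q-p+1)}{4(q-p+1)^2}\,I_0
\]
by \eqref{eq_Hdef}.
\end{enumerate}
Lemma \ref{lem_algebra} then gives the sign of $H$ by pure algebra. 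Do not take it from Theorem \ref{thm_sing}: its implication (ii)$\Rightarrow$(iii) is deduced from the present theorem, which would be circular.

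This inequality also shows what your endgame lacks. For $q<q_c(N,p)$ the bracket is negative, so $I_0=0$, hence $\psi=0$ because $A>0$ wherever $\psi\ne0$.

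At $q=q_c(N,p)$ the bracket vanishes, and a scheme of the form $\Lambda(q)\int w\le0$ with $\Lambda(q_c)=0$ yields nothing. For $p=2$ there is not even Cauchy--Schwarz slack whose equality case you could trace. The rigidity comes from the surplus $(q-p+1)I_1$, which appears because $I_1$ has coefficient $p-1$ in the stability bound but $q$ in $q$ times the identity. It forces $I_1=0$. Since $A|\nabla_\theta\psi|^2\ge|\nabla_\theta\psi|^p$ for $p\ge2$, this gives $\nabla_\theta\psi=0$, so $\psi$ is constant. The equation on the sphere then gives $\psi\in\{0,\pm c_s\}$.

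Finally, since $\psi$ is only in $W^{1,p}(\mathbb S^{N-1})\cap L^{q+1}(\mathbb S^{N-1})$, testing both the sphere equation and the stability inequality with it needs a density argument.
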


Part (i) covers the case $q_c(N,p)=+\infty$, in which it says that no nonzero stable homogeneous profile exists for any $q>p^*-1$. Taken together with Theorem \ref{thm_sing}, the two parts say that a nonzero $(-\alpha)$-homogeneous weak solution whose quadratic form is nonnegative outside a ball exists if and only if $q\ge q_c(N,p)$, and that at $q=q_c(N,p)$ there are exactly two. The profiles $\pm c_s|x|^{-\alpha}$ are therefore not merely an example at the threshold: they are the whole of it. Part (i) is also what proves the implication (ii) $\Rightarrow$ (iii) of Theorem \ref{thm_sing}, since $u_s$ is $(-\alpha)$-homogeneous and nonzero. There is no circularity: the proof of Theorem \ref{thm_qc_hom} uses neither Theorem \ref{thm_sing} nor Lemma \ref{lem_hardy}. For $q>q_c(N,p)$ the argument stops: the quantity that vanishes at the threshold has the wrong sign, and we do not know whether other homogeneous stable profiles appear.

In Section \ref{sec_sing} we also prove compactness of arbitrary blow-down sequences at $q=q_c$. What is not supplied by the present argument is a monotonicity formula forcing every blow-down limit to be homogeneous.

The solutions $u_{x_0}$ are unbounded near $x_0$, so they do not contradict Theorem \ref{thm_main}, whose hypothesis $u\in C^1(\mathbb{R}^N)$ they fail. Regular solutions with the same behavior at infinity exist as soon as $q>q_c(N,p)$, and they too are stable outside a compact set. We say that a positive radial solution $u$ of \eqref{eq_main} on $\mathbb R^N$ has \emph{slow decay} if
\begin{equation}\label{eq_slow}
0<\liminf_{r\to\infty}r^{\alpha}u(r)\le\limsup_{r\to\infty}r^{\alpha}u(r)<\infty .
\end{equation}
This is a statement about the rate only; the constant is not prescribed. It turns out to be prescribed automatically, together with the gradient.

\begin{proposition}\label{prop_radial}
Let $N>p\ge2$ and $q>p^*-1$, and let $u\in C^1(\mathbb{R}^N)$ be a positive radial solution of \eqref{eq_main} with slow decay \eqref{eq_slow}. Then
\begin{equation}\label{eq_slowexact}
\lim_{|x|\to\infty}|x|^{\alpha}u(x)=c_s
\qquad\text{and}\qquad
\lim_{|x|\to\infty}|x|^{\alpha+1}|\nabla u(x)|=\alpha c_s .
\end{equation}
If moreover $q>q_c(N,p)$, then $u$ is stable outside a compact set.
\end{proposition}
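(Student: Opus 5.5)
The plan is to pass to Emden--Fowler variables, where slow decay becomes boundedness of a trajectory of an autonomous planar system. From that system I will read off the limits \eqref{eq_slowexact}, and then derive stability from a weighted Hardy inequality, as in the proof of Theorem \ref{thm_sing}.

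\emph{Exact asymptotics.} Write $u(r)=r^{-\alpha}v(t)$ with $t=\log r$. The radial equation $-(r^{N-1}|u'|^{p-2}u')'=r^{N-1}u^{q}$ turns into an autonomous second order ODE for $v$. Equivalently it is a planar system for $(v,w)$ with $w=-r^{(\alpha+1)(p-1)}|u'|^{p-2}u'$. Since $u>0$ solves $-\Delta_p u=u^q\ge0$, the standard radial argument gives $u'<0$ for $r>0$. Hence $w>0$, and the system reads
\[
\dot v=\alpha v-w^{1/(p-1)},\qquad \dot w=\big((\alpha+1)(p-1)-(N-1)\big)w+v^{q}.
\]
Note that $(\alpha+1)(p-1)-(N-1)=(p-1)(\alpha-\gamma)<0$. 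Slow decay says that $v$ stays in a compact subset of $(0,\infty)$. I will also show that $w$ is bounded by integrating $w$'s equation, which makes $w$ a weighted average of $v^q$. So the forward orbit is precompact in the open quadrant, and its $\omega$-limit set $\Omega$ is a nonempty compact invariant set there.

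The only equilibrium in the open quadrant is $(c_s,(\alpha c_s)^{p-1})$. To exclude periodic orbits, and with them any $\omega$-limit set other than this point, I will use a Lyapunov function. The natural candidate is the Emden--Fowler energy
\[
H(v,\dot v)=\frac{p-1}{p}|\alpha v-\dot v|^{p}-\alpha v|\alpha v-\dot v|^{p-2}(\alpha v-\dot v)\cdot(\text{correction})+\frac{v^{q+1}}{q+1}+\dots,
\]
whose derivative along trajectories is a sign-definite multiple of $\dot v^2$ times the damping coefficient. That coefficient is $N-p-p\alpha$ up to positive factors, and it is positive exactly because $q>p^*-1$. I expect this step to be the main obstacle: finding the correct Lyapunov functional for $p\ne2$. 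If no clean $H$ emerges, the fallback is the Dulac criterion with weight $v^aw^b$. It excludes closed orbits once the divergence of the weighted field has a strict sign, and it can be tuned using $\alpha<\gamma$. After that, Poincar\'e--Bendixson together with LaSalle gives $\Omega=\{(c_s,(\alpha c_s)^{p-1})\}$. Therefore $v\to c_s$ and $w^{1/(p-1)}=r^{\alpha+1}|u'|\to\alpha c_s$, which is \eqref{eq_slowexact}.

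\emph{Stability for $q>q_c$.} Since $u$ is radial with $u'<0$, the quadratic form is bounded below by the rotationally weighted form
\[
Q_u(\varphi)\ge\int|u'|^{p-2}\Big(|\nabla\varphi|^2+(p-2)(\partial_r\varphi)^2\Big)-q\int u^{q-1}\varphi^2 .
\]
By \eqref{eq_slowexact}, for every $\eta>0$ there is $R$ such that on $|x|>R$
\[
|u'|^{p-2}\ge(1-\eta)(\alpha c_s)^{p-2}|x|^{-(\alpha+1)(p-2)},\qquad u^{q-1}\le(1+\eta)c_s^{q-1}|x|^{-\alpha(q-1)} .
\]
Because $(\alpha+1)(p-2)+2=\alpha(q-1)$, the problem reduces to the weighted Hardy inequality used for Theorem \ref{thm_sing}, which is Lemma \ref{lem_hardy}:
\[
(\alpha c_s)^{p-2}\int|x|^{-(\alpha+1)(p-2)}\Big(|\nabla\varphi|^2+(p-2)(\partial_r\varphi)^2\Big)\ge\kappa\, q\,c_s^{q-1}\int|x|^{-\alpha(q-1)}\varphi^2 .
\]
Here $\kappa$ is the optimal constant ratio, and $\kappa>1$ strictly exactly when $q>q_c(N,p)$. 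The strict inequality is where $q\ne q_c$ is needed: it leaves room to absorb $\eta$. Choosing $\eta$ with $(1-\eta)\kappa\ge1+\eta$ then gives $Q_u\ge0$ on $C_c^1(\mathbb R^N\setminus\overline{B_R})$.
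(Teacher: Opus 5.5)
Your skeleton is the paper's. Your $(v,w)$ is exactly its $(W,F)$, your system is \eqref{eq_phase} with $m=(p-1)(\gamma-\alpha)$, and precompactness comes from writing $w$ as an exponentially weighted average of $v^q$ (this must give $\liminf w>0$ as well as boundedness, and it does). Your stability half is the paper's Step 6, via Lemmas \ref{lem_hardy} and \ref{lem_algebra}.

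The gap is the step you flag as the main obstacle, and it carries the proof: nothing in the proposal actually excludes an $\omega$-limit set other than $P=(c_s,(\alpha c_s)^{p-1})$. The Lyapunov function is left as ``(correction)$+\dots$'', and the property you expect does not hold when $p\ne2$. The natural energy $E=\frac{p-1}{p}w^{p/(p-1)}-\alpha vw+\frac{v^{q+1}}{q+1}$ has $\dot E=(m-\alpha)(\alpha v-\dot v)^{p-1}\dot v$. Since $(\alpha v-\dot v)^{p-1}$ is not affine in $\dot v$, no correction $\Psi(v)$ turns this into a multiple of $\dot v^2$. The Dulac fallback is left untuned. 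The condition you would tune with, $\alpha<\gamma$, only gives $m>0$, whereas what matters is the sign of $m-\alpha$, i.e.\ $q>p^*-1$. Moreover, a Dulac argument does not give you LaSalle, so after Poincar\'e--Bendixson a homoclinic loop at $P$ would still have to be ruled out.

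Both tools can be completed. The divergence of the unweighted field is the constant $\alpha-m=p\alpha-(N-p)$, negative precisely because $q>p^*-1$. So Bendixson's criterion (your weight with $a=b=0$) excludes periodic orbits. The same Green's theorem computation on the region enclosed by a homoclinic loop, with a small disc around $P$ excised, excludes the loop. That is the paper's proof.

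Alternatively, take the correction $\Psi(v)=-\frac{(m-\alpha)\alpha^{p-1}}{p}v^p$. Then $\mathcal E=E+\Psi(v)$ satisfies
\[
\dot{\mathcal E}=-(m-\alpha)\big(w^{\frac1{p-1}}-\alpha v\big)\big(w-(\alpha v)^{p-1}\big)\le0
\]
by monotonicity of $s\mapsto s^{p-1}$, with equality exactly where $\dot v=0$; for $p=2$ this is $-(m-\alpha)\dot v^2$. The largest invariant subset of $\{\dot v=0\}$ in the open quadrant is $\{P\}$. Hence LaSalle's principle gives $(v,w)\to P$ at once, with no appeal to Poincar\'e--Bendixson and no separate treatment of homoclinic loops.

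The paper's constant-divergence argument is the shorter of the two. The Lyapunov function gives a monotone quantity along the radial flow and handles all non-equilibrium limit sets in one stroke.
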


The proof of \eqref{eq_slowexact} is a two-dimensional phase-plane argument whose whole input is that the divergence of the relevant vector field is the constant $p\alpha-(N-p)$, negative precisely because $q>p^*-1$. The only fact we import is the existence of radial solutions with slow decay, which is classical.

\begin{corollary}\label{cor_radialsharp}
Let $N>p\ge2$ and $q>q_c(N,p)$. Then for every $a>0$ equation \eqref{eq_main} has a positive bounded radial solution $u_a\in C^1(\mathbb{R}^N)$ with $u_a(0)=a$, and every such solution satisfies \eqref{eq_slowexact} and is stable outside a compact set. In particular the upper endpoint in Theorem \ref{thm_main} cannot be moved past $q_c(N,p)$, even within the class of positive bounded radial solutions.
\end{corollary}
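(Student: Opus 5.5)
We prove Corollary \ref{cor_radialsharp} in three steps: existence, identification of the decay, and the application of Proposition \ref{prop_radial}.

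\emph{Existence and regularity.} Fix $a>0$ and solve the radial initial value problem
\[
-(r^{N-1}|u'|^{p-2}u')'=r^{N-1}|u|^{q-1}u,\qquad u(0)=a,\quad u'(0)=0 .
\]
Local existence of a $C^1$ solution near $r=0$ follows from the integral formulation
\[
u(r)=a-\int_0^r\Big(s^{1-N}\int_0^s t^{N-1}|u|^{q-1}u\,dt\Big)^{\frac1{p-1}}ds
\]
and a fixed point argument. While $u>0$, $u$ is decreasing with $0<u\le a$, so the solution continues as long as it stays positive. Since $q>q_c(N,p)>p^*-1$, the classical radial theory of the $p$-Lane--Emden equation in the supercritical range shows that $u$ never vanishes. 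This is the Pohozaev-type argument: a zero at some $R$ would give a positive solution of the Dirichlet problem on $B_R$, which is impossible for $q\ge p^*-1$. Hence $u>0$ on $[0,\infty)$ and it defines a positive bounded radial $C^1(\mathbb R^N)$ weak solution; regularity at the origin is standard.

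\emph{Slow decay.} This is the step where the supercriticality is used and where I expect the main difficulty. Every positive radial solution satisfies $u(r)\le Cr^{-\alpha}$, by the standard monotonicity/integration estimate $r^{N}u(r)^q\lesssim r^{N-1}|u'|^{p-1}$ combined with $u$ decreasing. It remains to exclude fast decay $u\sim cr^{-\gamma}$ and to obtain the lower bound in \eqref{eq_slow}.

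\begin{itemize}
\item[] \emph{Excluding fast decay.} If $u$ decayed like $r^{-\gamma}$, then, because $q>p^*-1$ and $\alpha<\gamma$, both $|\nabla u|^p$ and $u^{q+1}$ would be integrable. The Pohozaev identity (valid for such $C^1$ radial solutions) would then force $u\equiv0$, a contradiction. Equivalently, one can cite Theorem \ref{thm_decay} once $\limsup r^\alpha u<\varepsilon_*$ is known.
\item[] \emph{Lower bound.} To get $\liminf r^\alpha u>0$, pass to the Emden--Fowler variables $t=\log r$, $v(t)=r^{\alpha}u(r)$, together with the flux variable. In the resulting autonomous planar system, the divergence is the constant $p\alpha-(N-p)<0$, and the orbit of our solution emanates from the origin-type equilibrium (the regular solution) and stays in a bounded region. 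By Poincar\'e--Bendixson together with Bendixson--Dulac (the constant negative divergence rules out periodic orbits and homoclinic loops), the $\omega$-limit is an equilibrium. The only candidates are $0$ and $(c_s,\ldots)$. Convergence to $0$ would mean $r^\alpha u\to0$, and then Theorem \ref{thm_decay} gives $u\equiv0$, which is absurd. Hence $r^\alpha u\to c_s$, and in particular \eqref{eq_slow} holds.
\end{itemize}

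\emph{Conclusion.} With \eqref{eq_slow} established, Proposition \ref{prop_radial} yields \eqref{eq_slowexact} for every such solution. Since $q>q_c(N,p)$, the same proposition also gives that $u_a$ is stable outside a compact set. This applies to every positive bounded radial solution with $u_a(0)=a$, because the argument above used only positivity, radial symmetry and $q>p^*-1$.

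Finally, sharpness follows immediately: $u_a\not\equiv0$ is a $C^1$ solution stable outside a compact set for every $q>q_c(N,p)$. Therefore the conclusion of Theorem \ref{thm_main} fails for each such $q$, and the range cannot be extended past $q_c(N,p)$ even within positive bounded radial solutions.
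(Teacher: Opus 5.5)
Your route differs from the paper's and is sound in outline. The paper's proof has three steps. Existence of $u_a$ for every $a>0$, and slow decay \eqref{eq_slow} for every positive radial solution, are imported from the classification of Bidaut-V\'eron in Franca's form. Boundedness comes from monotonicity via \eqref{eq_radialint}. Proposition \ref{prop_radial} does the rest.

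You replace that citation with three ingredients: shooting plus the Pohozaev obstruction on balls for existence, the standard bound $r^\alpha u\le C$ from \eqref{eq_radialint}, and the system \eqref{eq_phase} run from the origin, with Theorem \ref{thm_decay} excluding convergence to the trivial equilibrium. This uses the paper's own decay theorem in place of the fast/slow dichotomy of the classification, and it makes the corollary depend only on elementary inputs.

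The cost is that you lose what makes Step 3 of Proposition \ref{prop_radial} painless. Without \eqref{eq_slow}, the forward orbit is only relatively compact in the \emph{closed} quadrant. Its $\omega$-limit set may a priori contain the origin, where for $p>2$ the field is not Lipschitz because of $F^{1/(p-1)}$. Your sketch needs three checks there:
\begin{enumerate}
\item The $\omega$-limit set misses the open half-axes, so it lies in $\mathcal Q\cup\{0\}$. The flow leaves $\mathcal Q$ transversally through $\{W=0,\ F>0\}$ and enters it transversally through $\{F=0,\ W>0\}$, with uniqueness near both.
\item Orbits in $\mathcal Q$ cannot reach the origin in finite time, since $\dot F\ge -mF$ forward and $\dot W\le \alpha W$ backward. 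Hence Poincar\'e--Bendixson still applies, because every non-equilibrium point of the $\omega$-limit set lies where the field is $C^1$.
\item Besides periodic orbits and homoclinic loops, you must also exclude cycles of connecting orbits through both equilibria, such as a heteroclinic cycle $0\to P\to 0$. The same Green argument with constant divergence $p\alpha-(N-p)<0$ handles this, after excising small disks around $0$ and $P$.
\end{enumerate}
After these checks only the case where the $\omega$-limit set is $\{0\}$ remains, and Theorem \ref{thm_decay} rules it out exactly as you say. Your first bullet on excluding fast decay then becomes redundant.
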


\begin{proof}
For $q>p^*-1$ and every $a>0$ there is a positive radial solution of \eqref{eq_main} on $\mathbb R^N$ with $u_a(0)=a$ and $u_a'(0)=0$, and every such solution has slow decay \eqref{eq_slow}; this is the classification of radial solutions of \eqref{eq_main} due to Bidaut-V\'eron \cite{BidautVeron1989}, in the form given by Franca \cite[Cor.~3.14]{Franca2004}. Since $q>q_c(N,p)>p^*-1$, Proposition \ref{prop_radial} applies. Such a solution is radially decreasing, because the right hand side of \eqref{eq_radialint} below is positive, so it is bounded by $a$.
\end{proof}

The other excluded exponent has a different structure. At $q=p^*-1$ the Pohozaev identity no longer forces triviality. Nevertheless the decay argument developed for Theorem \ref{thm_main} still yields finite energy, without imposing it a priori.

\begin{theorem}[The Sobolev endpoint]\label{thm_sob_endpoint}
Let $N>p\ge2$, $q=p^*-1$, and let $u\in C^1(\mathbb R^N)$ be a weak solution of \eqref{eq_main} which is stable outside a compact set. Then
\begin{equation}\label{eq_critical_energy}
u\in\mathcal D^{1,p}(\mathbb R^N),\qquad
\int_{\mathbb R^N}|\nabla u|^p\,dx
=\int_{\mathbb R^N}|u|^{p^*}\,dx<\infty.
\end{equation}
If $u$ has finite Morse index, then the number of its nodal regions is finite and does not exceed $i(u)$. In particular, if $i(u)\le1$, then either $u\equiv0$ or, up to sign, $u=u_{\lambda,x_0}$ for some $\lambda>0$ and $x_0\in\mathbb R^N$; and the same alternative holds for a solution of one sign, without assuming finite Morse index.
\end{theorem}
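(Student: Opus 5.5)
The plan has three parts, taken in order: finite energy via decay, the nodal count, and the classification.

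\emph{Finite energy.} I would deduce \eqref{eq_critical_energy} from Theorem \ref{thm_decay}, applied with $q=p^*-1$. This is allowed because $p^*-1>\frac{N(p-1)}{N-p}$. What has to be checked is the smallness hypothesis $\limsup_{|x|\to\infty}|x|^{\alpha}|u(x)|<\varepsilon_*$, and at the critical exponent $\alpha=\frac{N-p}{p}$.

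First I would use the stability estimate \eqref{eq_intro_estimate} with some $\delta\in(1,\delta_0)$; note $\delta_0>1$. It gives $\int_{B_{2r}\setminus B_r}|u|^{q+\delta}\le Cr^{N-\alpha(q+\delta)}$. To turn this into a pointwise bound, rescale with $u_r(x)=r^{\alpha}u(rx)$ on the annulus $\{1/2<|x|<4\}$. The rescaled functions satisfy the same equation, with $\|u_r\|_{L^{q+\delta}}$ bounded uniformly in $r$. Local boundedness (Serrin-type estimates for $p$-Laplace equations with a potential $|u_r|^{q-1}\in L^{(q+\delta)/(q-1)}$, where $(q+\delta)/(q-1)>N/p$) then gives $|u(x)|\le C|x|^{-\alpha}$.

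To get the needed smallness rather than mere boundedness, I would instead follow the route of \cite[Thm.~1.10]{DFSV2009}. There one shows $\int_{B_{2r}\setminus B_r}|u|^{q+\delta}=o(r^{N-\alpha(q+\delta)})$, or equivalently $u_r\to0$ in $L^{q+\delta}$ of the annulus. At the critical exponent the $\delta=1$ quantity is scale invariant, so the $o(\cdot)$ is not automatic. I would obtain it from the tail of a convergent integral. The integral $\int_{|x|>R_0}|u|^{p^*}$ is finite: sum the dyadic bounds with $\delta=1$ and apply the test-function argument with cutoffs of growing support. Once that is in place, the local boundedness step gives $|x|^\alpha|u|\to0$, and Theorem \ref{thm_decay} applies.

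I expect this step to be the main obstacle. If the summation fails because each annulus contributes only $O(1)$, I would fall back on $\varepsilon$-regularity. Stability outside $K$ gives $\int_{B_{2r}\setminus B_r}|u|^{q+\delta}\le C$ uniformly in the rescaled variables with $\delta>1$. This is a higher integrability than the critical $L^{p^*}$. It forces the blow-downs $u_r$ to be precompact, and their limits would be stable on $\mathbb R^N\setminus\{0\}$. A stable critical solution on a punctured space is zero, by Farina's test-function argument with $\delta>1$ together with the Pohozaev identity. This yields $u_r\to0$ locally uniformly on annuli, which is exactly the smallness required.

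\emph{Nodal count.} Suppose $u$ has finite Morse index and let $\Omega_1,\dots,\Omega_m$ be distinct nodal regions. By finite energy, $u_j=u\chi_{\Omega_j}\in\mathcal D^{1,p}$. Testing the equation with $u_j$ gives $\int|\nabla u_j|^p=\int|u_j|^{p^*}$. For $p\ge2$ one then computes $Q_u(u_j)=(p-1)\int|\nabla u|^p\chi_{\Omega_j}-q\int|u|^{q+1}\chi_{\Omega_j}=(p-q-1)\int_{\Omega_j}|u|^{p^*}<0$. The $u_j$ have disjoint supports, so $Q_u$ is negative definite on their span. Approximating by $C_c^1$ functions, using truncations $(|u|-\epsilon)^+\chi_{\Omega_j}$ and cutoffs at infinity controlled by the finite energy, produces an $m$-dimensional negative subspace. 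Hence $m\le i(u)$.

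\emph{Classification.} If $i(u)\le1$, then $u$ has at most one nodal region, so it has one sign. A one-signed finite-energy $C^1$ solution at the critical exponent is classified as $\pm u_{\lambda,x_0}$. For $p\ge2$ this uses the positive-solution classification in $\mathcal D^{1,p}$ due to Sciunzi / V\'etois, combined with the strong maximum principle to pass from $u\ge0$ to $u>0$. The one-sign case without a finite Morse index assumption is covered by the same argument, since finite energy was obtained above from stability outside a compact set alone.
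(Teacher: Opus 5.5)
Your plan is essentially the paper's proof: finite energy comes from the endpoint case $q=p^*-1$ of Proposition \ref{prop_decay} fed into Theorem \ref{thm_decay}. The obstacle you flag does not arise, because a single cutoff of growing support gives \eqref{eq_ball} with $\delta=1$, where $m_1=N$, so $\int_{B_r\setminus B_{2R_0}}|u|^{p^*}\le C$ uniformly in $r$ and the $L^{p^*}$ norm on far annuli tends to zero. Your nodal count is the paper's $p=2$ argument run for all $p\ge2$, which is legitimate since $Q_u$ is continuous on $\mathcal D^{1,p}(\mathbb R^N)$ once $\nabla u\in L^p$ and $p\ge2$; for $p>2$ the paper instead cites \cite[Thm.~1.4]{FMW2019}.

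Two corrections. First, for $p>2$ the Serrin potential must be $c_r=|u_r|^{q-p+1}$, which is bounded in $L^k$ with $k=\frac{q+\delta}{q-p+1}>\frac Np$ for $\delta>1$; with $|u_r|^{q-1}$ and $k=\frac{q+\delta}{q-1}$ one has $k<\frac Np$ for $\delta$ near $1$. Second, drop the blow-down fallback: at $q=p^*-1$ the Pohozaev identity coincides with the energy identity and cannot force a limit to vanish.
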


For $p=2$ the first assertion of Theorem \ref{thm_sob_endpoint} is due to Farina \cite[Thm.~2]{Farina2007}, who proves it for $C^2$ solutions by a different route; what is new here is the range $p>2$, where no monotonicity formula is available. For $p>2$, the estimate on the number of nodal regions is \cite[Thm.~1.4]{FMW2019}; its hypothesis $u\in\mathcal D^{1,p}(\mathbb R^N)$ is precisely what \eqref{eq_critical_energy} supplies and what is assumed a priori in \cite{FMW2019}. For $p=2$ the same estimate follows directly from the standard nodal-domain localization argument; we include it in the proof below. Once a solution has at most one nodal region it does not change sign, and the classification of positive finite-energy solutions of the critical equation gives an Aubin--Talenti profile: this is \cite{Sciunzi2016} for $p>2$ and the theorem of Caffarelli, Gidas and Spruck \cite{CGS1989} for $p=2$. For higher index a formula classification is not expected from energy information alone: nonradial sign-changing finite-energy solutions at the critical exponent are known to exist for $N\ge4$ \cite{ClappRios2018}. In fact, when $N\ge4$, Clapp and Vicente-Ben\'itez recently proved that for every $1<p<N$ the critical $p$-Laplace equation in $\mathbb R^N$ possesses infinitely many nonradial sign-changing solutions which are inequivalent under translations and dilations \cite[Thm.~1.1]{ClappVicente2026}.

The functions $u_{\lambda,x_0}$ themselves are solutions of finite Morse index, and they are stable outside a ball by \cite[Prop.~10.1]{DFSV2009}. Farina, Mercuri and Willem observed that the exact value of their Morse index was not known \cite[Rem.~1]{FMW2019}. It is one.

\begin{theorem}\label{thm_index}
Let $2<p<N$ and $q=p^*-1$. Then $i(u_{\lambda,x_0})=1$ for every $\lambda>0$ and every $x_0\in\mathbb{R}^N$.
\end{theorem}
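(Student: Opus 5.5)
We must show $i(u_{\lambda,x_0})\ge1$ and $i(u_{\lambda,x_0})\le1$. By translation and the scaling $u\mapsto \mu^{\frac{N-p}{p}}u(\mu\,\cdot)$, which leaves $Q_u$ invariant up to a change of variables in $\varphi$, it suffices to treat $U=u_{1,0}$, which is positive, radial and radially decreasing, with $\nabla U\neq0$ away from the origin.

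\emph{Lower bound.} Test $Q_U$ with $\varphi=U\eta_R$, where $\eta_R$ is a standard cutoff equal to $1$ on $B_R$ and supported in $B_{2R}$. For $\varphi=U$ formally one gets $Q_U(U)=(p-1)\int|\nabla U|^p-q\int U^{q+1}=(p-1-q)\int U^{p^*}<0$, since $q=p^*-1>p-1$ and $\int|\nabla U|^p=\int U^{p^*}$. The decay $U\sim|x|^{-\gamma}$, $|\nabla U|\sim|x|^{-\gamma-1}$ makes the cutoff errors $O(R^{N-p^*\frac{N-p}{p}})\to0$. Hence $Q_U(U\eta_R)<0$ for $R$ large, and $i(U)\ge1$.

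\emph{Upper bound.} This is the main point. I would argue via the Sobolev inequality. $U$ minimizes the Rayleigh quotient $J(v)=\int|\nabla v|^p/(\int|v|^{p^*})^{p/p^*}$ on $\mathcal D^{1,p}$. Since $p>2$, $J$ is $C^2$ near $U$ along directions in $C^1_c$. The minimality gives $\frac{d^2}{dt^2}J(U+t\varphi)|_{t=0}\ge0$. A direct computation, using the equation, gives
\[
\frac{d^2}{dt^2}\Big|_{t=0} J(U+t\varphi)=c\Big[Q_U(\varphi)+ (q+1-p)\frac{\big(\int U^{q}\varphi\big)^2}{\int U^{q+1}}\cdot(\text{positive constant})\Big]
\]
with $c>0$. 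Precisely, one gets $p\,Q_U(\varphi)/\|U\|^{p}_{p^*}$ plus a rank-one term of the form $C\big(\int U^{p^*-1}\varphi\big)^2$, where the coefficient is nonnegative because $p^*>p$. I would do this computation carefully, including the weight $|\nabla U|^{p-2}$ in the second variation of $\int|\nabla v|^p$, which is legitimate since $p>2$ and $\nabla U$ is bounded.

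It follows that $Q_U(\varphi)\ge0$ on the hyperplane $H=\{\varphi\in C_c^1:\int U^{p^*-1}\varphi=0\}$. If $Z\subset C_c^1$ were a two-dimensional subspace on which $Q_U<0$ away from $0$, then $Z\cap H\neq\{0\}$, which is a contradiction. Hence $i(U)\le1$.

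\emph{Expected obstacle.} The delicate step is the justification of second differentiability of $t\mapsto\int|\nabla U+t\nabla\varphi|^p$ at $t=0$. Its second derivative is exactly the first two terms of $Q_U$, namely $p\int|\nabla U|^{p-2}|\nabla\varphi|^2+p(p-2)\int|\nabla U|^{p-4}(\nabla U\cdot\nabla\varphi)^2$. For $p\ge2$ this follows by dominated convergence from the $C^2$ regularity of $\xi\mapsto|\xi|^p$, with bounds $|D^2(|\xi|^p)|\le C|\xi|^{p-2}$ on the compact support of $\varphi$. Its interplay with the convention on $\{\nabla U=0\}=\{0\}$, a null set, is harmless. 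This is where $p>2$ is used. One must also check that $J(U+t\varphi)\ge J(U)$ holds with $U+t\varphi\in\mathcal D^{1,p}$, which is immediate.
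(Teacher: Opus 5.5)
Your argument is correct, but it takes a different route from the paper on both bounds.

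\emph{Lower bound.} The paper just invokes the nodal-region estimate $n(u)\le i(u)$ of \cite[Thm.~1.4]{FMW2019}. Your direct test with $U\eta_R$ is self-contained and works; the identity $Q_{-u}=Q_u$ covers the negative members of \eqref{eq_bubble}. The exponent you wrote for the cutoff error is wrong, though: $N-p^*\frac{N-p}{p}=0$. With $\gamma=\frac{N-p}{p-1}$, every error term is in fact $O(R^{-\gamma})$.

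\emph{Upper bound.} The paper proves that $J$ and the Nehari functional $\Phi$ are Fr\'echet $C^2$ on all of $\mathcal D^{1,p}(\mathbb R^N)$ (Lemma~\ref{lem_C2}, which rests on the superposition continuity of Lemma~\ref{lem_nem}). It then uses the implicit function theorem to produce $C^2$ curves in the Nehari manifold through $U$ with any prescribed tangent $v\in T=\ker\Phi'(U)$, and deduces $J''(U)[v,v]\ge0$ on $T$.

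You instead differentiate the $0$-homogeneous Sobolev quotient twice along a single direction $\varphi\in C_c^1$. This needs only dominated convergence on the support of $\varphi$, and your computation is right: the second derivative equals $p\,\|U\|_{L^{p^*}}^{-p}\big[Q_U(\varphi)+(p^*-p)\big(\int U^{p^*-1}\varphi\big)^2/\int U^{p^*}\big]$. The two hyperplanes are in fact the same, since the equation gives $\Phi'(U)[v]=(p-p^*)\int U^{p^*-1}v$.

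\emph{Comparison.} Your route is shorter and avoids Banach-space $C^2$ theory and the implicit function theorem. That is all the definition of $i(u)$ through subspaces of $C_c^1(\mathbb R^N)$ requires. The paper's route gives the stronger statement that $J''(U)$ is nonnegative on a closed codimension-one subspace of the whole energy space. The superposition lemma it relies on is also reused elsewhere in the paper (Proposition~\ref{prop_qc_blowdown}).
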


Only the upper bound has to be proved: the lower bound $i(u_{\lambda,x_0})\ge1$ is contained in \cite[Thm.~1.4]{FMW2019}, which bounds the number of nodal regions of a solution by its Morse index. The upper bound is a constrained second-order argument on the Nehari manifold, and the role of the hypothesis $p>2$ is that it makes the energy functional twice differentiable.

The proof of Theorem \ref{thm_main} does not use the hypothesis $p\ge 2$ in an essential way at every step, but it does use it in a way we have not been able to remove, namely in the integral estimate of Lemma \ref{lem_stab} and in the sign of the exponent $p-2$ in Lemma \ref{lem_super}. See Remark \ref{rem_psmall}.

\begin{remark}
For $p=2$ the argument below is a proof of the supercritical part of Farina's theorem \cite{Farina2007} which uses no monotonicity formula and no blow-down analysis. Sections \ref{sec_decay} to \ref{sec_pohozaev} may be read with $p=2$ throughout, in which case Lemma \ref{lem_super} reduces to the elementary computation $-\Delta\big(A|x|^{-\alpha}+B|x|^{2-N}\big)=\alpha(N-2-\alpha)A|x|^{-\alpha-2}$.
\end{remark}

\smallskip
Section \ref{sec_prelim} collects the comparison principle, the subsolution property of $|u|$ and the local estimates that we use. Section \ref{sec_stab} proves the integral estimates coming from stability and identifies the exponent range in which they are useful, which is where $q_c(N,p)$ enters. Section \ref{sec_decay} proves the decay $|u(x)|=o(|x|^{-\alpha})$ together with its gradient counterpart. Section \ref{sec_boot} contains the comparison bootstrap, which is the new ingredient. Section \ref{sec_pohozaev} deduces finite energy, applies the Pohozaev identity and proves Theorems \ref{thm_decay} and \ref{thm_main}. Section \ref{sec_sing} proves Theorems \ref{thm_sing} and \ref{thm_qc_hom}, establishes blow-down compactness at $q=q_c$, and proves Proposition \ref{prop_radial}. Section \ref{sec_index} proves Theorem \ref{thm_sob_endpoint} and computes the Morse index of the extremals in Theorem \ref{thm_index}.

Throughout, $B_r$ denotes the open Euclidean ball of radius $r$ centered at the origin, and $C$ denotes a positive constant that may change from line to line and depends only on $N$, $p$, $q$ and on the quantities indicated.

\section{Preliminaries}\label{sec_prelim}

\begin{lemma}[Weak comparison principle on exterior domains]\label{lem_comp}
Let $R>0$, let $f\in L^\infty_{\mathrm{loc}}(\overline{B_R^c})$ where $B_R^c=\{x:|x|>R\}$, and let
$v,w\in C(\overline{B_R^c})\cap W^{1,p}_{\mathrm{loc}}(B_R^c)$ satisfy
\[
\int_{B_R^c}|\nabla v|^{p-2}\nabla v\cdot\nabla\varphi\,dx\le\int_{B_R^c}f\varphi\,dx
\le\int_{B_R^c}|\nabla w|^{p-2}\nabla w\cdot\nabla\varphi\,dx
\]
for every nonnegative $\varphi\in C_c^1(B_R^c)$. Assume $v\le w$ on $\partial B_R$ and
$\limsup_{|x|\to\infty}\big(v(x)-w(x)\big)\le 0$. Then $v\le w$ in $B_R^c$.
\end{lemma}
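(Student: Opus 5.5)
We prove the weak comparison principle of Lemma~\ref{lem_comp} by testing the difference of the two inequalities with a truncation of $(v-w)^+$ that has compact support in $B_R^c$, and then using the strict monotonicity of the vector field $\xi\mapsto|\xi|^{p-2}\xi$.

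Fix $\varepsilon>0$ and set $\varphi_\varepsilon=(v-w-\varepsilon)^+$. Its support is compact in $B_R^c$:
\begin{itemize}
\item near infinity, $\limsup_{|x|\to\infty}(v-w)\le0$ gives $\rho>R$ with $v-w<\varepsilon$ on $\{|x|\ge\rho\}$;
\item near $\partial B_R$, continuity of $v-w$ on $\overline{B_R^c}$ together with $v\le w$ on $\partial B_R$ gives $v-w<\varepsilon$ on a shell $\{R\le|x|\le R+\eta\}$, using compactness of $\partial B_R$.
\end{itemize}
Hence $\operatorname{supp}\varphi_\varepsilon\subset\{R+\eta\le|x|\le\rho\}$. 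Moreover $\varphi_\varepsilon$ is nonnegative, belongs to $W^{1,p}$ there, is continuous and is bounded.

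The hypotheses are stated only for $\varphi\in C_c^1(B_R^c)$, so we pass to $\varphi_\varepsilon$ by density. Mollify $\varphi_\varepsilon$ to obtain nonnegative $\varphi_k\in C_c^\infty(B_R^c)$, supported in a fixed compact annulus $A$, with $\varphi_k\to\varphi_\varepsilon$ in $W^{1,p}(A)$ and uniformly. The left and right sides of the hypothesis pass to the limit:
\begin{itemize}
\item $|\nabla v|^{p-2}\nabla v$ and $|\nabla w|^{p-2}\nabla w$ lie in $L^{p'}(A)$, so the gradient terms converge;
\item $f\in L^\infty(A)$, so $\int f\varphi_k\to\int f\varphi_\varepsilon$.
\end{itemize}
Subtracting the two inequalities, the $f$ terms cancel and we obtain
\[
\int_{\{v-w>\varepsilon\}}\big(|\nabla v|^{p-2}\nabla v-|\nabla w|^{p-2}\nabla w\big)\cdot(\nabla v-\nabla w)\,dx\le0 .
\]

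The integrand is nonnegative, and it vanishes only where $\nabla v=\nabla w$. This follows from the strict monotonicity of $\xi\mapsto|\xi|^{p-2}\xi$, which for $p\ge2$ takes the quantitative form $(|a|^{p-2}a-|b|^{p-2}b)\cdot(a-b)\ge c_p|a-b|^p$. Therefore $\nabla\varphi_\varepsilon=0$ a.e., and $\varphi_\varepsilon$ is constant on each connected component of $B_R^c$. Since $N\ge2$, $B_R^c$ is connected, and $\varphi_\varepsilon$ has compact support, so $\varphi_\varepsilon\equiv0$. This means $v\le w+\varepsilon$, and letting $\varepsilon\to0$ gives $v\le w$ in $B_R^c$.

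The only delicate point is producing an admissible test function. The $\varepsilon$-shift is what makes the support of $\varphi_\varepsilon$ compact away from both $\partial B_R$ and infinity, so that local $W^{1,p}$ regularity and local boundedness of $f$ suffice. This is exactly why only a $\limsup$ condition at infinity is needed, and no global integrability of $v$ or $w$. The remaining steps are routine.
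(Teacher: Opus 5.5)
Your proof is correct and follows the paper's argument. The ingredients are the same: the test function $(v-w-\varepsilon)^+$, whose support is compact in $B_R^c$ and which is made admissible by density; subtracting the two inequalities; and the strict monotonicity of $\xi\mapsto|\xi|^{p-2}\xi$. The only difference is cosmetic and lies in the last step. You use that $\varphi_\varepsilon$ has zero gradient on the connected set $B_R^c$ together with compact support. The paper instead works on each connected component of $\{v-w>\varepsilon\}$ and uses continuity at its boundary, which avoids needing connectedness of $B_R^c$, i.e.\ $N\ge2$.
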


\begin{proof}
Fix $\lambda>0$ and set $D_\lambda=\{x\in B_R^c: v(x)-w(x)>\lambda\}$. Since $v-w$ is continuous on $\overline{B_R^c}$ and $v-w\le 0$ on $\partial B_R$, the set $D_\lambda$ has positive distance from $\partial B_R$. Since $\limsup_{|x|\to\infty}(v-w)\le 0$, the set $D_\lambda$ is bounded. Hence $(v-w-\lambda)^+$ belongs to $W^{1,p}(B_R^c)$ and has compact support in $B_R^c$, so it is an admissible test function by density. Subtracting the two inequalities and testing with it gives
\[
\int_{D_\lambda}\left(|\nabla v|^{p-2}\nabla v-|\nabla w|^{p-2}\nabla w\right)\cdot(\nabla v-\nabla w)\,dx\le 0 .
\]
The map $\xi\mapsto|\xi|^{p-2}\xi$ is strictly monotone on $\mathbb{R}^N$, so the integrand is nonnegative and vanishes only where $\nabla v=\nabla w$. Therefore $\nabla(v-w)=0$ almost everywhere in $D_\lambda$, so $v-w$ is constant on each connected component of $D_\lambda$. On the boundary of any such component one has $v-w=\lambda$ by continuity, while $v-w>\lambda$ inside. Hence $D_\lambda=\emptyset$. Letting $\lambda\to 0$ finishes the proof.
\end{proof}

\begin{lemma}\label{lem_abs}
Let $u\in C^1(\mathbb{R}^N)$ be a weak solution of \eqref{eq_main}. Then $|u|$ is a weak subsolution of $-\Delta_p z=|u|^q$, that is,
\[
\int_{\mathbb{R}^N}\big|\nabla|u|\big|^{p-2}\nabla|u|\cdot\nabla\varphi\,dx\le\int_{\mathbb{R}^N}|u|^q\varphi\,dx
\qquad\text{for every nonnegative }\varphi\in C_c^1(\mathbb{R}^N).
\]
\end{lemma}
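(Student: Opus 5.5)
We prove Lemma \ref{lem_abs} by a Kato-type argument, approximating $|u|$ by a smooth convex function of $u$ so that the chain rule can be applied to the weak formulation. For $\varepsilon>0$ set $\eta_\varepsilon(t)=\sqrt{t^2+\varepsilon^2}$. Then $\eta_\varepsilon$ is smooth and convex, $|\eta_\varepsilon'|\le 1$, $\eta_\varepsilon'(t)\to\operatorname{sgn}t$, and $\eta_\varepsilon(t)\to|t|$ uniformly. Put $z_\varepsilon=\eta_\varepsilon(u)\in C^1$, so that $\nabla z_\varepsilon=\eta_\varepsilon'(u)\nabla u$ and
\[
|\nabla z_\varepsilon|^{p-2}\nabla z_\varepsilon=|\eta_\varepsilon'(u)|^{p-2}\eta_\varepsilon'(u)\,|\nabla u|^{p-2}\nabla u .
\]

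Fix a nonnegative $\varphi\in C_c^1(\mathbb R^N)$. Choose the test function $\psi=|\eta_\varepsilon'(u)|^{p-2}\eta_\varepsilon'(u)\varphi$ in the weak formulation of \eqref{eq_main}. It is admissible: it is compactly supported and Lipschitz, because $g(t)=|\eta_\varepsilon'(t)|^{p-2}\eta_\varepsilon'(t)$ is $C^1$ for $p\ge2$ and $u\in C^1$. Its gradient is
\[
\nabla\psi=g(u)\nabla\varphi+g'(u)\varphi\nabla u,\qquad g'\ge0 ,
\]
where $g'\ge 0$ because $g$ is a nondecreasing function of $\eta_\varepsilon'$ and $\eta_\varepsilon'$ is increasing. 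Therefore
\[
\int|\nabla z_\varepsilon|^{p-2}\nabla z_\varepsilon\cdot\nabla\varphi
=\int|u|^{q-1}u\,g(u)\varphi-\int g'(u)|\nabla u|^p\varphi
\le\int|u|^{q-1}u\,g(u)\varphi .
\]
Since $|g|\le1$, the right side is at most $\int|u|^q\varphi$.

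Then let $\varepsilon\to0$. Pointwise, $\nabla z_\varepsilon\to\operatorname{sgn}(u)\nabla u$, which equals $\nabla|u|$ almost everywhere; on $\{u=0\}$ both sides vanish a.e., since $\nabla u=0$ a.e. there. The integrands are dominated by $|\nabla u|^{p-1}|\nabla\varphi|$, so dominated convergence gives the claimed inequality.

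The only delicate point is justifying the test function and the chain rule near $\{u=0\}$, where $|u|$ fails to be differentiable. The regularization is there precisely to avoid testing directly with $\operatorname{sgn}(u)\varphi$, and the sign $g'\ge0$ is exactly what discards the gradient term. If $1<p<2$, one would instead test with $\eta_\varepsilon'(u)\varphi$ and use the monotonicity inequality differently, but here $p\ge2$ makes $g$ Lipschitz and the argument direct.
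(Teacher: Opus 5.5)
Your argument is correct and is essentially the paper's proof. The paper tests with $\theta_\varepsilon(u)\varphi$, where $\theta_\varepsilon(s)=s(s^2+\varepsilon^2)^{-1/2}$ is your $\eta_\varepsilon'$; it then drops the nonnegative term $\int|\nabla u|^p\theta_\varepsilon'(u)\varphi$, bounds the right side by $\int|u|^q\varphi$, and passes to the limit by dominated convergence exactly as you do. The only difference is your extra factor $|\eta_\varepsilon'(u)|^{p-2}$: it makes the left side exactly the flux of $z_\varepsilon=\sqrt{u^2+\varepsilon^2}$, but it forces $p\ge2$. The plain choice $\eta_\varepsilon'(u)\varphi$, which you mention for $1<p<2$, works verbatim for every $p>1$ with no further monotonicity argument, since $\eta_\varepsilon''\ge0$ already disposes of the gradient term.
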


\begin{proof}
For $\varepsilon>0$ put $\theta_\varepsilon(s)=s(s^2+\varepsilon^2)^{-1/2}$, so that $\theta_\varepsilon$ is smooth, bounded by $1$ in absolute value, and $\theta_\varepsilon'\ge 0$. Let $\varphi\in C_c^1(\mathbb{R}^N)$ be nonnegative. The function $\theta_\varepsilon(u)\varphi$ is Lipschitz with compact support, hence admissible, and testing \eqref{eq_main} with it gives
\[
\int_{\mathbb{R}^N}|\nabla u|^{p-2}\theta_\varepsilon(u)\,\nabla u\cdot\nabla\varphi\,dx
+\int_{\mathbb{R}^N}|\nabla u|^{p}\,\theta_\varepsilon'(u)\,\varphi\,dx
=\int_{\mathbb{R}^N}|u|^{q-1}u\,\theta_\varepsilon(u)\,\varphi\,dx .
\]
The second term on the left is nonnegative, and $|u|^{q-1}u\,\theta_\varepsilon(u)\le|u|^q$ pointwise. Hence
\[
\int_{\mathbb{R}^N}|\nabla u|^{p-2}\theta_\varepsilon(u)\,\nabla u\cdot\nabla\varphi\,dx\le\int_{\mathbb{R}^N}|u|^q\varphi\,dx .
\]
As $\varepsilon\to 0$ we have $\theta_\varepsilon(u)\to \operatorname{sgn}(u)$ pointwise, with the convention $\operatorname{sgn}(0)=0$. Since $\nabla u=0$ almost everywhere on $\{u=0\}$ and $\nabla|u|=\operatorname{sgn}(u)\nabla u$ almost everywhere, the integrand converges almost everywhere to $\big|\nabla|u|\big|^{p-2}\nabla|u|\cdot\nabla\varphi$ and is dominated by $|\nabla u|^{p-1}|\nabla\varphi|$, which is integrable on the support of $\varphi$. Dominated convergence gives the claim.
\end{proof}

We shall use two local estimates for the $p$-Laplacian. The first is Serrin's local boundedness theorem.

\begin{lemma}[{Serrin \cite[Thm.~1]{Serrin1964}, see also \cite{Trudinger1967}}]\label{lem_serrin}
Let $\Omega=B_4\setminus \overline{B_1}$ and $\Omega'=B_3\setminus\overline{B_2}$, let $p>1$ and let $k>N/p$. For every $M>0$ there is a constant $C=C(N,p,k,M)$ with the following property. If $c\in L^k(\Omega)$ with $\|c\|_{L^k(\Omega)}\le M$ and if $v\in W^{1,p}(\Omega)\cap C(\Omega)$ satisfies
\[
\int_{\Omega}|\nabla v|^{p-2}\nabla v\cdot\nabla\varphi\,dx=\int_{\Omega}c(x)|v|^{p-2}v\,\varphi\,dx
\qquad\text{for every }\varphi\in C_c^1(\Omega),
\]
then
\[
\sup_{\Omega'}|v|\le C\,\|v\|_{L^{p}(\Omega)} .
\]
\end{lemma}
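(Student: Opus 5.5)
We would prove Lemma \ref{lem_serrin} by Moser iteration. The lemma is local and the constant depends only on $(N,p,k,M)$, so we cover $\overline{\Omega'}$ by finitely many balls $B_{\rho}(x_i)$ with $B_{2\rho}(x_i)\subset\Omega$, where $\rho$ depends only on $N$. It then suffices to prove $\sup_{B_\rho}|v|\le C\|v\|_{L^p(B_{2\rho})}$ on a single ball. By scaling we may take $\rho$ fixed; translation does not affect $\|c\|_{L^k}$. Since $|v|$ and $-v$ play symmetric roles, we work with $|v|$ directly.

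\emph{Step 1 (Caccioppoli inequality for powers).} Fix $\beta\ge1$, a truncation level $L>0$, and put $\bar v=\min(|v|,L)$. Take a cutoff $\eta\in C_c^1(B_{r})$ with $\eta=1$ on $B_{r'}$ and $|\nabla\eta|\le 2/(r-r')$, where $\rho\le r'<r\le2\rho$. We test the equation with $\varphi=\eta^p\,\bar v^{\,\beta-1}v$. This is admissible in $W^{1,p}_0$ because of the truncation and the continuity of $v$; a density argument passes from $C_c^1$ test functions to it. Using Young's inequality on the term in which $\nabla\eta$ appears, we get
\[
\int\eta^p\bar v^{\,\beta-1}|\nabla v|^p\le C\beta^{p}\int|\nabla\eta|^p\bar v^{\,\beta-1}|v|^p+C\int|c|\,\eta^p\bar v^{\,\beta-1}|v|^p .
\]
We then set $w=\bar v^{(\beta-1)/p}v$, so that $|\nabla w|^p\lesssim\beta^{p}\,\bar v^{\,\beta-1}|\nabla v|^p$ (one checks this separately on $\{|v|<L\}$ and $\{|v|\ge L\}$). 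The inequality becomes $\int|\nabla(\eta w)|^p\le C\beta^{C}\big(\int|\nabla\eta|^p|w|^p+\int|c|\,|\eta w|^p\big)$.

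\emph{Step 2 (absorbing the potential; this is the main obstacle).} This is the only place where $k>N/p$ enters. By H\"older, $\int|c|\,|\eta w|^p\le M\|\eta w\|_{L^{pk'}}^p$. Since $k>N/p$ we have $p<pk'<p^*$, with $p^*=Np/(N-p)$ when $p<N$ and any large exponent otherwise. Interpolating between $L^p$ and $L^{p^*}$ and then applying Sobolev gives
\[
\|\eta w\|_{L^{pk'}}^p\le\varepsilon\|\nabla(\eta w)\|_p^p+C\varepsilon^{-\sigma}\|\eta w\|_p^p
\]
for some $\sigma=\sigma(N,p,k)$. We choose $\varepsilon\sim(M\beta^{C})^{-1}$ to absorb the gradient term. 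The price is a factor polynomial in $\beta$ and $M$, which is harmless for the iteration. Applying Sobolev once more, we obtain the reverse H\"older step
\[
\|\bar v^{\,\beta-1}|v|^p\|_{L^{\chi}(B_{r'})}\le\Big(\frac{C(M)\beta^{C}}{(r-r')^{p}}\Big)\big\|\bar v^{\,\beta-1}|v|^p\big\|_{L^1(B_r)},\qquad\chi=\tfrac{N}{N-p}>1,
\]
with $\chi$ replaced by any fixed number greater than $1$ if $p\ge N$.

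\emph{Step 3 (iteration).} We let $L\to\infty$ by monotone convergence and set $\beta_j+p-1=p\chi^j$ and $r_j=\rho(1+2^{-j})$. Iterating Step 2 gives
\[
\|v\|_{L^{p\chi^{j+1}}(B_{r_{j+1}})}\le\prod_{i\le j}\big(C\,\chi^{Ci}2^{pi}\big)^{1/(p\chi^{i})}\,\|v\|_{L^p(B_{2\rho})}.
\]
The product converges because $\sum_i i\chi^{-i}<\infty$. Letting $j\to\infty$ gives $\sup_{B_\rho}|v|\le C(N,p,k,M)\|v\|_{L^p(B_{2\rho})}$. Summing over the finite cover yields $\sup_{\Omega'}|v|\le C\|v\|_{L^p(\Omega)}$, which is the claim. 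Beyond the absorption in Step 2, the only delicate point is justifying the test functions, and the truncation $\bar v$ handles it.
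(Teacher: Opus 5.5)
Your Moser iteration is correct, but it is not the paper's route. The paper gives no argument of its own. It imports Serrin's Theorem~1 and only checks that $-\Delta_p v=c|v|^{p-2}v$ fits Serrin's framework, with $\mathbf A(x,z,\xi)=|\xi|^{p-2}\xi$ and $B(x,z,\xi)=-c(x)|z|^{p-2}z$. The only nonzero structural coefficient is $|c|$, multiplying $|z|^{p-1}$; the hypothesis $k>N/p$ puts it above the scaling-critical space $L^{N/p}$; and Serrin's constant depends on $c$ only through $\|c\|_{L^k}$.

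What you wrote is essentially the mechanism underlying Serrin's theorem, specialised to this homogeneous equation:
\begin{enumerate}[label=(\roman*)]
\item the Caccioppoli inequality for $\bar v^{\,\beta-1}v$;
\item absorption of $\int|c|\,|\eta w|^p$ by interpolating $L^{pk'}$ between $L^p$ and $L^{p^*}$, which is exactly where $k>N/p$ (equivalently $pk'<p^*$) is used;
\item the iteration with $\beta_j+p-1=p\chi^j$.
\end{enumerate}

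The citation buys brevity. Your version makes visible the two facts the paper asserts by inspecting Serrin's constants. First, there is no additive term in the bound, because the equation is homogeneous of degree $p-1$ in $v$. Second, $C$ is uniform in $c$ through $M$ alone, and your proof shows the dependence is polynomial. That uniformity is precisely what Proposition~\ref{prop_decay} needs for the rescaled family $u_r$.

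Two minor remarks. Since $v$ is continuous, hence bounded on compact subsets of $\Omega$, the truncation at level $L$ is not actually needed for admissibility of the test function; it is harmless. Your H\"older step needs $k\ge1$. The statement implicitly requires this anyway, since for $p>N$ it formally allows $k<1$, and it holds in the paper's application, where $N>p$.
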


The equation above is of the form $\operatorname{div}\mathbf A(x,v,\nabla v)=B(x,v,\nabla v)$ treated in \cite{Serrin1964}, with $\mathbf A(x,z,\xi)=|\xi|^{p-2}\xi$ and $B(x,z,\xi)=-c(x)|z|^{p-2}z$. The only coefficient occurring in the structure conditions of \cite{Serrin1964} that is not zero here is the one multiplying $|z|^{p-1}$ in the bound for $|B|$, and it equals $|c|$. Its scaling critical space is $L^{N/p}$, so the hypothesis $k>N/p$ puts it strictly above the critical exponent, which is what the local boundedness estimate requires. The dependence of the constant on $c$ enters only through $\|c\|_{L^k}$, whence the uniformity recorded above.

The second is the interior gradient bound, in the scale invariant form that we need.

\begin{lemma}\label{lem_grad}
Let $p>1$. There is $C=C(N,p)$ such that for every $v\in C^1(B_2)$ solving $-\Delta_p v=g$ weakly in $B_2$ with $g\in L^\infty(B_2)$ one has
\[
\sup_{B_1}|\nabla v|\le C\left(\|v\|_{L^\infty(B_2)}+\|g\|_{L^\infty(B_2)}^{\frac{1}{p-1}}\right).
\]
\end{lemma}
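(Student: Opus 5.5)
The estimate is scale invariant, so the plan is to reduce it to a normalized situation and then invoke the $C^{1,\beta}$ regularity theory of DiBenedetto \cite{DiBenedetto1983} and Tolksdorf \cite{Tolksdorf1984}.

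\emph{Normalization.} Put $M=\|v\|_{L^\infty(B_2)}+\|g\|_{L^\infty(B_2)}^{1/(p-1)}$. If $M=0$, then $v\equiv0$ and there is nothing to prove, so assume $M>0$. Consider $\tilde v=v/M$. Since $\Delta_p$ is homogeneous of degree $p-1$, it solves
\[
-\Delta_p\tilde v=\tilde g,\qquad \tilde g=g/M^{p-1}.
\]
By the choice of $M$ we have $\|\tilde v\|_{L^\infty(B_2)}\le1$ and $\|\tilde g\|_{L^\infty(B_2)}\le1$. It therefore suffices to show $\sup_{B_1}|\nabla\tilde v|\le C(N,p)$ under these two normalized bounds. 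Undoing the scaling, $\nabla v=M\nabla\tilde v$, which gives the stated inequality.

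\emph{The normalized bound.} Apply \cite[Thm.~2]{DiBenedetto1983} or \cite[Thm.~1]{Tolksdorf1984} to the structure $\mathbf A(x,z,\xi)=|\xi|^{p-2}\xi$ with bounded right-hand side $\tilde g$. These give a $C^{1,\beta}(\overline{B_1})$ bound on $\tilde v$, depending only on $N$, $p$, $\|\tilde v\|_{L^\infty(B_2)}$ and $\|\tilde g\|_{L^\infty(B_2)}$. Hence $\sup_{B_1}|\nabla\tilde v|\le C(N,p)$.

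\emph{Checking the hypotheses.} The regularity theorems need $\tilde v\in W^{1,p}(B_{3/2})$; this holds because $v\in C^1(B_2)$. Their constants must not depend on $\|\nabla\tilde v\|_{L^p}$ beyond an a priori bound. If a version is used whose constant does involve $\|\nabla\tilde v\|_{L^p(B_{3/2})}$, that norm is controlled by a Caccioppoli estimate: test the equation with $\tilde v\eta^p$, where $\eta$ is a cutoff equal to $1$ on $B_{3/2}$ and supported in $B_2$. Then Young's inequality gives
\[
\int_{B_{3/2}}|\nabla\tilde v|^p\le C\Big(\int_{B_2}|\tilde v|^p+\int_{B_2}|\tilde g||\tilde v|\Big)\le C(N,p).
\]
Finally, covering $\overline{B_1}$ by finitely many small balls, a number depending only on $N$, handles the passage from balls to $B_1$.

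The only delicate point is making sure the cited theorem's constant depends on the data exclusively through these normalized sup norms. The Caccioppoli step above is what secures that, so no real obstacle is expected.
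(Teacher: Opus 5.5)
Your proposal is correct and follows the paper's proof. Both normalize by $\|v\|_{L^\infty(B_2)}+\|g\|_{L^\infty(B_2)}^{1/(p-1)}$, using the $(p-1)$-homogeneity of $\Delta_p$, and then invoke the interior $C^{1,\beta}$ estimates of DiBenedetto and Tolksdorf for the normalized problem. Your Caccioppoli bound and covering remark are harmless extra bookkeeping that the paper leaves implicit.
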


\begin{proof}
Set $\lambda=\|v\|_{L^\infty(B_2)}+\|g\|_{L^\infty(B_2)}^{1/(p-1)}$, which we may assume positive, and put $V=v/\lambda$. Then $-\Delta_p V=\lambda^{1-p}g$ weakly in $B_2$, with $\|V\|_{L^\infty(B_2)}\le 1$ and $\|\lambda^{1-p}g\|_{L^\infty(B_2)}\le 1$. The standard interior $C^{1,\beta}$ estimates for degenerate quasilinear equations, as developed for instance by DiBenedetto \cite{DiBenedetto1983} and Tolksdorf \cite{Tolksdorf1984}, give $\sup_{B_1}|\nabla V|\le C(N,p)$. Multiplying by $\lambda$ gives the claim.
\end{proof}

Finally we record two facts about superposition that are used in Sections \ref{sec_sing} and \ref{sec_index}. When $p>2$ we write
\begin{equation}\label{eq_Mdef}
\mathcal M(\xi)=|\xi|^{p-2}\mathrm{Id}+(p-2)|\xi|^{p-4}\xi\otimes\xi\quad(\xi\ne0),
\qquad \mathcal M(0)=0
\end{equation}
for the matrix that linearizes the $p$-Laplacian; this is the matrix implicit in the first two terms of $Q_u$, since $\mathcal M(\xi)\zeta\cdot\zeta=|\xi|^{p-2}|\zeta|^2+(p-2)|\xi|^{p-4}(\xi\cdot\zeta)^2$.

\begin{lemma}\label{lem_nem}
Let $\Omega\subseteq\mathbb{R}^N$ be open.
\begin{enumerate}[label=\rm(\roman*)]
\item For every $q>1$ the map $z\mapsto|z|^{q-1}$ acts continuously from $L^{q+1}(\Omega)$ into $L^{\frac{q+1}{q-1}}(\Omega)$.
\item Let $p>2$. Then $\xi\mapsto|\xi|^{p-2}\xi$ is of class $C^1$ on $\mathbb{R}^N$ with differential $\mathcal M$,
\begin{equation}\label{eq_Mbound}
|\mathcal M(\xi)|\le(p-1)|\xi|^{p-2}\qquad\text{for every }\xi\in\mathbb{R}^N ,
\end{equation}
and $\mathcal M$ acts continuously from $L^p(\Omega;\mathbb{R}^N)$ into $L^{\frac{p}{p-2}}(\Omega;\mathbb{R}^{N\times N})$.
\end{enumerate}
\end{lemma}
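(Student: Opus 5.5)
For (i), the pointwise bound $\big||z|^{q-1}\big|^{\frac{q+1}{q-1}}=|z|^{q+1}$ shows the map sends $L^{q+1}(\Omega)$ into $L^{\frac{q+1}{q-1}}(\Omega)$. Continuity then follows from a standard Nemytskii argument. Given $z_n\to z$ in $L^{q+1}$, pass to a subsequence with $z_n\to z$ a.e. and $|z_n|\le h$ for some $h\in L^{q+1}$; this is the partial converse of dominated convergence. Then $\big||z_n|^{q-1}-|z|^{q-1}\big|^{\frac{q+1}{q-1}}\le C h^{q+1}$, and dominated convergence gives convergence along the subsequence. Since every subsequence has a further subsequence with the same limit, the whole sequence converges. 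When $1<q<2$ the function $s\mapsto s^{q-1}$ is not Lipschitz, but the argument only needs continuity of $s\mapsto |s|^{q-1}$ and the growth bound, so nothing changes.

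For (ii), with $p>2$ set $F(\xi)=|\xi|^{p-2}\xi$. On $\xi\ne0$, $F$ is smooth and a direct computation gives $DF(\xi)=|\xi|^{p-2}\mathrm{Id}+(p-2)|\xi|^{p-4}\xi\otimes\xi=\mathcal M(\xi)$. At $\xi=0$ we have $|F(\xi)|=|\xi|^{p-1}=o(|\xi|)$ because $p-1>1$, so $DF(0)=0=\mathcal M(0)$. Continuity of $\mathcal M$ at $0$ follows from the bound \eqref{eq_Mbound} and $p-2>0$, so $F\in C^1$.

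For \eqref{eq_Mbound}, note that $\mathcal M(\xi)$ is symmetric with eigenvalues $|\xi|^{p-2}$ (multiplicity $N-1$, on $\xi^\perp$) and $(p-1)|\xi|^{p-2}$ (on $\xi$). Its operator norm is therefore $(p-1)|\xi|^{p-2}$. If the Frobenius norm is intended instead, one gets a dimensional constant; I would state the operator norm. The continuity of $\mathcal M$ from $L^p$ to $L^{\frac{p}{p-2}}$ is then proved exactly as in (i): $\mathcal M$ is continuous on $\mathbb R^N$ and $|\mathcal M(\xi)|^{\frac p{p-2}}\le C|\xi|^p$, so the same subsequence-and-domination argument applies.

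The only delicate point is the differentiability at $\xi=0$ and the continuity of $\mathcal M$ there, which is exactly where $p>2$ is used.
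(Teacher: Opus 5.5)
Your proposal is correct and follows essentially the same route as the paper: the $C^1$ analysis away from and at the origin, the operator-norm bound (you compute it exactly via the eigenvalues $|\xi|^{p-2}$ and $(p-1)|\xi|^{p-2}$, while the paper bounds $|\mathcal M(\xi)e|$ for unit $e$ by the triangle inequality), and the subsequence-plus-domination Nemytskii argument. The only packaging difference is that the paper states one observation for any continuous $F$ with $|F(z)|\le C_F|z|^a$ acting from $L^b$ into $L^{b/a}$, constructs the dominating function $h=|z_1|+\sum_j|z_{j+1}-z_j|$ explicitly instead of quoting the partial converse of dominated convergence, and obtains (i) and (ii) as the cases $(a,b)=(q-1,q+1)$ and $(a,b)=(p-2,p)$, exactly as you indicate.
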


\begin{proof}
We first dispose of the pointwise statements in (ii). Away from the origin $\xi\mapsto|\xi|^{p-2}\xi$ is differentiable with differential $\mathcal M(\xi)$, and for a unit vector $e$ we have $|\mathcal M(\xi)e|\le|\xi|^{p-2}+(p-2)|\xi|^{p-2}$, which is \eqref{eq_Mbound}. At the origin, $\big||\xi|^{p-2}\xi\big|=|\xi|^{p-1}=o(|\xi|)$ because $p>2$, so the map is differentiable there with differential $0=\mathcal M(0)$; and the right hand side of \eqref{eq_Mbound} tends to $0$ as $\xi\to0$, so $\mathcal M$ is continuous at the origin. Hence the map is of class $C^1$ on $\mathbb{R}^N$.

Both superposition statements are instances of the following observation: if $F$ is a continuous map from $\mathbb{R}^d$ into a finite dimensional normed space with $|F(z)|\le C_F|z|^{a}$ for every $z$, and if $0<a\le b<\infty$ with $b\ge1$, then $F$ acts continuously from $L^{b}(\Omega;\mathbb{R}^d)$ into $L^{b/a}(\Omega)$. The growth bound gives $|F(z)|^{b/a}\le C_F^{b/a}|z|^{b}$, so $F$ does map $L^b$ into $L^{b/a}$. Let $z_j\to z$ in $L^b$. It is enough to produce, in every subsequence, a further subsequence along which $F(z_j)\to F(z)$ in $L^{b/a}$. Passing to a subsequence we may assume $\|z_{j+1}-z_j\|_{L^b}\le2^{-j}$, so that
\[
h:=|z_1|+\sum_{j\ge1}|z_{j+1}-z_j|
\]
satisfies $\|h\|_{L^b}\le\|z_1\|_{L^b}+1$ and $|z_j|\le h$ for every $j$. The series also converges almost everywhere, so $z_j\to z$ almost everywhere along this subsequence and $|z|\le h$ almost everywhere. By the continuity of $F$ we get $|F(z_j)-F(z)|^{b/a}\to0$ almost everywhere, while
\[
\big|F(z_j)-F(z)\big|^{b/a}\le\big(2C_Fh^{a}\big)^{b/a}=(2C_F)^{b/a}h^{b}\in L^1(\Omega),
\]
and dominated convergence concludes. Part (i) is the case $(a,b)=(q-1,q+1)$, $F(z)=|z|^{q-1}$, and the last assertion of (ii) is the case $(a,b)=(p-2,p)$, $F=\mathcal M$, the growth bound being \eqref{eq_Mbound}.
\end{proof}

\section{Integral estimates from stability}\label{sec_stab}

Throughout this section $N>p\ge 2$, $q>p-1$, and $u\in C^1(\mathbb{R}^N)$ is a weak solution of \eqref{eq_main} which is stable outside a compact set $K$. We fix $R_0\ge 1$ with $K\subset B_{R_0}$, and we set
\begin{equation}
\delta_0=\frac{2q-(p-1)+2\sqrt{q(q-p+1)}}{p-1},
\end{equation}
which is the larger root of the quadratic $(p-1)\delta^2-\big(4q-2(p-1)\big)\delta+(p-1)=0$ and which satisfies $\delta_0>1$ because $q>p-1$.

The following estimate is not new. For $p>2$ it is \cite[Prop.~1.4]{DFSV2009}, and for $p=2$ it goes back to \cite{Farina2007}. We give the short proof because we use it for every $p\ge2$ in a single statement, and because the exact range of admissible $\delta$ is used quantitatively in Lemma \ref{lem_qc} and in Proposition \ref{prop_decay}.

\begin{lemma}[{\cite[Prop.~1.4]{DFSV2009}}]\label{lem_stab}
Let $\delta\in[1,\delta_0)$ and let $\gamma_0\ge \frac{p(q+\delta)}{q-p+1}$. There is $C=C(N,p,q,\delta,\gamma_0)$ such that
\begin{equation}
\int_{\mathbb{R}^N}|u|^{q+\delta}\eta^{\gamma_0}\,dx+\int_{\mathbb{R}^N}|u|^{\delta-1}|\nabla u|^p\eta^{\gamma_0}\,dx
\le C\int_{\mathbb{R}^N}|\nabla\eta|^{\frac{p(q+\delta)}{q-p+1}}\,dx
\end{equation}
for every $\eta\in C_c^\infty(\mathbb{R}^N\setminus K)$ with $0\le\eta\le 1$.
\end{lemma}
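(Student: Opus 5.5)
The plan is the Farina test-function argument adapted to the $p$-Laplacian, as in \cite{DFSV2009}. Fix $\eta\in C_c^\infty(\mathbb{R}^N\setminus K)$ with $0\le\eta\le1$ and an integer power $m$ to be chosen, $m\ge\gamma_0/2$ so that every cutoff power below is at least $\gamma_0$ up to harmless factors. I will use two identities and combine them.

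\emph{Step 1 (equation tested with $|u|^{\delta-1}u\,\psi^p$).} With $\psi=\eta^m$, the function $|u|^{\delta-1}u\,\psi^p$ is $C^1$ with compact support, since $\delta\ge1$ and $u\in C^1$, so it is admissible. Testing the equation with it gives
\[
\delta\int|u|^{\delta-1}|\nabla u|^p\psi^p+p\int|u|^{\delta-1}u\,\psi^{p-1}|\nabla u|^{p-2}\nabla u\cdot\nabla\psi=\int|u|^{q+\delta}\psi^p .
\]

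\emph{Step 2 (stability tested with $\varphi=|u|^{\frac{\delta-1}{2}}u\,\psi^{p/2}$).} This $\varphi$ is $C^1$ because $\frac{\delta+1}{2}\ge1$, and it is supported in $\mathbb{R}^N\setminus K$. Since $p\ge2$, one has $Q_u(\varphi)\le(p-1)\int|\nabla u|^{p-2}|\nabla\varphi|^2-q\int|u|^{q+\delta}\psi^p$. Only the upper bound $(p-1)|\nabla u|^{p-2}|\nabla\varphi|^2$ on the first two terms of $Q_u$ is needed here, and this is the place where $p\ge2$ enters. Now $\nabla\varphi=\frac{\delta+1}{2}|u|^{\frac{\delta-1}{2}}\nabla u\,\psi^{p/2}+|u|^{\frac{\delta-1}{2}}u\,\nabla\psi^{p/2}$. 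Expanding the square gives the main term $(p-1)\frac{(\delta+1)^2}{4}\int|u|^{\delta-1}|\nabla u|^p\psi^p$, cross terms of the form $\int|u|^{\delta-1}u|\nabla u|^{p-2}\nabla u\cdot\psi^{p-1}\nabla\psi$, and a term $\int|u|^{\delta+1}|\nabla u|^{p-2}\psi^{p-2}|\nabla\psi|^2$.

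\emph{Step 3 (combine).} Stability gives $q\int|u|^{q+\delta}\psi^p\le(p-1)\frac{(\delta+1)^2}{4}\int|u|^{\delta-1}|\nabla u|^p\psi^p+\text{error}$. Substituting the right side of Step 1 for $\int|u|^{q+\delta}\psi^p$ yields
\[
\Big(q\delta-(p-1)\tfrac{(\delta+1)^2}{4}\Big)\int|u|^{\delta-1}|\nabla u|^p\psi^p\le\text{error terms}.
\]
The coefficient is positive exactly when $(p-1)\delta^2-(4q-2(p-1))\delta+(p-1)<0$, that is, for $\delta$ strictly between the two roots. The smaller root is $<1$ because $q>p-1$, so $\delta\in[1,\delta_0)$ is precisely the admissible range. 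This is where the exact value of $\delta_0$ is used.

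\emph{Step 4 (absorb the errors).} Every error term carries at least one factor $|\nabla\psi|$. Using Young's inequality with a small parameter, the gradient part of each is absorbed into $\varepsilon\int|u|^{\delta-1}|\nabla u|^p\psi^p$. Each then reduces to a term $\int|u|^{\delta+p-1}\psi^{p-(\ldots)}|\nabla\psi|^{p}$; for the $|\nabla u|^{p-2}$ term I use Young with exponents $\frac{p}{p-2}$ and $\frac p2$. Apply Young once more with exponents $\frac{q+\delta}{\delta+p-1}$ and $\frac{q+\delta}{q-p+1}$ to bound this by $\varepsilon\int|u|^{q+\delta}\psi^p+C\int\psi^{p-\frac{p(q+\delta)}{q-p+1}}|\nabla\psi|^{\frac{p(q+\delta)}{q-p+1}}$. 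With $\psi=\eta^m$ the weight is $\eta^{mp-\frac{p(q+\delta)}{q-p+1}}|\nabla\eta|^{\frac{p(q+\delta)}{q-p+1}}$. For $m$ large the power of $\eta$ is nonnegative and bounded by $1$, and $mp\ge\gamma_0$ by the hypothesis $\gamma_0\ge\frac{p(q+\delta)}{q-p+1}$ (take $mp=\gamma_0$ after rescaling, or use $\eta^{mp}\le\eta^{\gamma_0}$). The $\varepsilon\int|u|^{q+\delta}\psi^p$ term is handled through Step 1 in the same way.

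The main obstacle I expect is this bookkeeping. The $|\nabla u|^{p-2}|u|^{\delta+1}|\nabla\psi|^2$ term must be split so that every absorbed piece has exactly the integrand $|u|^{\delta-1}|\nabla u|^p\psi^p$ or $|u|^{q+\delta}\psi^p$, while the cutoff powers stay nonnegative and the constant depends only on $N,p,q,\delta,\gamma_0$. Once the gradient bound holds, Step 1 also yields the bound on $\int|u|^{q+\delta}\eta^{\gamma_0}$.
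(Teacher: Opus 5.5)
Your proposal is correct and is essentially the paper's proof: the same two test functions ($|u|^{\delta-1}u\,\eta^{\gamma_0}$ in the equation and $|u|^{\frac{\delta-1}{2}}u\,\eta^{\gamma_0/2}$ in $Q_u$), the same Young splittings with exponents $(\frac{p}{p-1},p)$, $(\frac{p}{p-2},\frac p2)$ and $(\frac{q+\delta}{\delta+p-1},\frac{q+\delta}{q-p+1})$, and the same positivity condition $q\delta>(p-1)\frac{(\delta+1)^2}{4}$. The only structural difference is that you eliminate $\int|u|^{q+\delta}$ to bound the gradient term first, whereas the paper eliminates the gradient term to bound $\int|u|^{q+\delta}$ first. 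One correction: you must take $\psi=\eta^{\gamma_0/p}$ exactly, which is admissible since $\gamma_0/p>1$, rather than an integer power $m\ge\gamma_0/2$, because when $mp>\gamma_0$ the inequality $\eta^{mp}\le\eta^{\gamma_0}$ runs the wrong way for the left-hand side of the lemma.
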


\begin{proof}
Since $\delta\ge 1$ and $u\in C^1(\mathbb{R}^N)$, the function $|u|^{\delta-1}u\,\eta^{\gamma_0}$ belongs to $W^{1,p}(\mathbb{R}^N)$ and has compact support in $\mathbb{R}^N\setminus K$. Every integral written below is finite, because $u$ and $\nabla u$ are continuous and $\eta$ has compact support, so each of the absorption steps is legitimate. Testing \eqref{eq_main} with it gives
\begin{equation}\label{eq_test1}
\int|u|^{q+\delta}\eta^{\gamma_0}
=\delta\int|u|^{\delta-1}|\nabla u|^p\eta^{\gamma_0}
+\gamma_0\int|u|^{\delta-1}u\,|\nabla u|^{p-2}\nabla u\cdot\nabla\eta\ \eta^{\gamma_0-1},
\end{equation}
where all integrals are over $\mathbb{R}^N$. For $\varepsilon\in(0,1)$, Young's inequality with exponents $\frac{p}{p-1}$ and $p$ applied to the factorization
\[
|u|^{\delta-1}|u|\,|\nabla u|^{p-1}\eta^{\gamma_0-1}|\nabla\eta|
=\left(|u|^{\frac{(\delta-1)(p-1)}{p}}|\nabla u|^{p-1}\eta^{\frac{\gamma_0(p-1)}{p}}\right)
\left(|u|^{\frac{\delta-1}{p}+1}\eta^{\frac{\gamma_0}{p}-1}|\nabla\eta|\right)
\]
yields
\begin{equation}\label{eq_young1}
\gamma_0\left|\int|u|^{\delta-1}u\,|\nabla u|^{p-2}\nabla u\cdot\nabla\eta\ \eta^{\gamma_0-1}\right|
\le \varepsilon\int|u|^{\delta-1}|\nabla u|^p\eta^{\gamma_0}
+C_\varepsilon\int|u|^{\delta+p-1}\eta^{\gamma_0-p}|\nabla\eta|^p .
\end{equation}
Next we use stability. The function $\varphi=|u|^{\frac{\delta-1}{2}}u\,\eta^{\gamma_0/2}$ belongs to $C_c^1(\mathbb{R}^N\setminus K)$, so $Q_u(\varphi)\ge 0$. Since $p\ge 2$ we have $|\nabla u|^{p-4}(\nabla u\cdot\nabla\varphi)^2\le|\nabla u|^{p-2}|\nabla\varphi|^2$ pointwise, whence
\[
q\int|u|^{q+\delta}\eta^{\gamma_0}\le (p-1)\int|\nabla u|^{p-2}|\nabla\varphi|^2 .
\]
Expanding $\nabla\varphi$ and applying Young's inequality in the form $|a+b|^2\le(1+\varepsilon)a^2+(1+\varepsilon^{-1})b^2$,
\[
|\nabla\varphi|^2\le(1+\varepsilon)\frac{(\delta+1)^2}{4}|u|^{\delta-1}|\nabla u|^2\eta^{\gamma_0}
+C_\varepsilon|u|^{\delta+1}\eta^{\gamma_0-2}|\nabla\eta|^2 ,
\]
so that
\begin{equation}\label{eq_stab1}
q\int|u|^{q+\delta}\eta^{\gamma_0}
\le (p-1)(1+\varepsilon)\frac{(\delta+1)^2}{4}\int|u|^{\delta-1}|\nabla u|^p\eta^{\gamma_0}
+C_\varepsilon\int|u|^{\delta+1}|\nabla u|^{p-2}\eta^{\gamma_0-2}|\nabla\eta|^2 .
\end{equation}
If $p>2$, Young's inequality with exponents $\frac{p}{p-2}$ and $\frac p2$ applied to
\[
|u|^{\delta+1}|\nabla u|^{p-2}\eta^{\gamma_0-2}|\nabla\eta|^2
=\left(|u|^{\frac{(\delta-1)(p-2)}{p}}|\nabla u|^{p-2}\eta^{\frac{\gamma_0(p-2)}{p}}\right)
\left(|u|^{\delta+1-\frac{(\delta-1)(p-2)}{p}}\eta^{\frac{2\gamma_0}{p}-2}|\nabla\eta|^2\right)
\]
gives, since $\frac p2\left(\delta+1-\frac{(\delta-1)(p-2)}{p}\right)=\delta+p-1$,
\begin{equation}\label{eq_young2}
C_\varepsilon\int|u|^{\delta+1}|\nabla u|^{p-2}\eta^{\gamma_0-2}|\nabla\eta|^2
\le\varepsilon\int|u|^{\delta-1}|\nabla u|^p\eta^{\gamma_0}
+C\int|u|^{\delta+p-1}\eta^{\gamma_0-p}|\nabla\eta|^p ,
\end{equation}
where $C$ depends on $\varepsilon$. For $p=2$ the inequality \eqref{eq_young2} holds trivially with the first term on the right omitted.

From \eqref{eq_test1} and \eqref{eq_young1},
\begin{equation}\label{eq_grad_control}
\int|u|^{\delta-1}|\nabla u|^p\eta^{\gamma_0}
\le\frac{1}{\delta-\varepsilon}\left(\int|u|^{q+\delta}\eta^{\gamma_0}
+C_\varepsilon\int|u|^{\delta+p-1}\eta^{\gamma_0-p}|\nabla\eta|^p\right).
\end{equation}
Substituting \eqref{eq_young2} and \eqref{eq_grad_control} into \eqref{eq_stab1} we obtain
\[
\left(q-\frac{(p-1)(1+\varepsilon)\frac{(\delta+1)^2}{4}+\varepsilon}{\delta-\varepsilon}\right)
\int|u|^{q+\delta}\eta^{\gamma_0}\le C\int|u|^{\delta+p-1}\eta^{\gamma_0-p}|\nabla\eta|^p .
\]
The condition $q\delta>(p-1)\frac{(\delta+1)^2}{4}$ is equivalent to $(p-1)\delta^2-\big(4q-2(p-1)\big)\delta+(p-1)<0$, whose roots are
\[
\delta_\pm=\frac{2q-(p-1)\pm 2\sqrt{q(q-p+1)}}{p-1},
\]
so it holds precisely for $\delta\in(\delta_-,\delta_+)=(\delta_-,\delta_0)$. Since $q>p-1$ the value $\delta=1$ satisfies it strictly, so $\delta_-<1$ and the whole interval $[1,\delta_0)$ is allowed. Write $\kappa_\delta=q-\frac{(p-1)(\delta+1)^2}{4\delta}$, which is positive by the previous line. The coefficient on the left is a continuous function of $\varepsilon$ on $[0,\delta/2]$ whose value at $\varepsilon=0$ is $\kappa_\delta$, so there is $\varepsilon_\delta\in(0,\delta/2]$ such that this coefficient is at least $\kappa_\delta/2$ for every $\varepsilon\in(0,\varepsilon_\delta]$. Fixing such an $\varepsilon$ and using \eqref{eq_grad_control} we get
\begin{equation}\label{eq_almost}
\int|u|^{q+\delta}\eta^{\gamma_0}+\int|u|^{\delta-1}|\nabla u|^p\eta^{\gamma_0}
\le C\int|u|^{\delta+p-1}\eta^{\gamma_0-p}|\nabla\eta|^p .
\end{equation}
Finally, Young's inequality with exponents $\frac{q+\delta}{\delta+p-1}$ and $\frac{q+\delta}{q-p+1}$ applied to the factorization
\[
|u|^{\delta+p-1}\eta^{\gamma_0-p}|\nabla\eta|^p
=\left(|u|^{\delta+p-1}\eta^{\frac{\gamma_0(\delta+p-1)}{q+\delta}}\right)
\left(\eta^{\gamma_0-p-\frac{\gamma_0(\delta+p-1)}{q+\delta}}|\nabla\eta|^p\right)
\]
gives
\[
C\int|u|^{\delta+p-1}\eta^{\gamma_0-p}|\nabla\eta|^p
\le\frac12\int|u|^{q+\delta}\eta^{\gamma_0}
+C\int\eta^{\gamma_0-\frac{p(q+\delta)}{q-p+1}}|\nabla\eta|^{\frac{p(q+\delta)}{q-p+1}} .
\]
The exponent of $\eta$ in the last integral equals $\gamma_0-\frac{p(q+\delta)}{q-p+1}\ge 0$ by the choice of $\gamma_0$, so that factor is bounded by $1$. Absorbing the first term on the right into the left hand side of \eqref{eq_almost} completes the proof.
\end{proof}

\begin{corollary}\label{cor_annuli}
Let $\delta\in[1,\delta_0)$ and write $m_\delta=\frac{p(q+\delta)}{q-p+1}$. There is $C=C(N,p,q,\delta,R_0)$ such that for every $r\ge 8R_0$,
\begin{equation}\label{eq_ann}
\int_{B_{4r}\setminus B_r}|u|^{q+\delta}\,dx\le C\,r^{\,N-m_\delta},
\end{equation}
\begin{equation}\label{eq_ball}
\int_{B_{r}\setminus B_{2R_0}}|u|^{q+\delta}\,dx\le C\left(1+r^{\,N-m_\delta}\right).
\end{equation}
\end{corollary}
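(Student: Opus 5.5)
We derive both estimates from Lemma \ref{lem_stab}, applied with one fixed exponent $\gamma_0=m_\delta$ and a suitable cutoff. For \eqref{eq_ann}, fix $r\ge 8R_0$. We choose $\eta\in C_c^\infty(B_{8r}\setminus \overline{B_{r/2}})$ with $0\le\eta\le1$, $\eta\equiv1$ on $B_{4r}\setminus B_r$, and $|\nabla\eta|\le C/r$, where $C$ depends only on $N$. Since $r/2\ge 4R_0>R_0$ and $K\subset B_{R_0}$, the support of $\eta$ lies in $\mathbb R^N\setminus K$, so $\eta$ is admissible in Lemma \ref{lem_stab}. The lemma then gives
\[
\int_{B_{4r}\setminus B_r}|u|^{q+\delta}\,dx\le\int_{\mathbb R^N}|u|^{q+\delta}\eta^{m_\delta}\,dx\le C\int_{B_{8r}\setminus B_{r/2}}|\nabla\eta|^{m_\delta}\,dx\le C\,r^{-m_\delta}\,r^{N}.
\]
The constant depends only on $N,p,q,\delta$, which is \eqref{eq_ann}.

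For \eqref{eq_ball}, the plan is to split the region into a fixed bounded piece and dyadic annuli. If $r\le 8R_0$, the region lies in the compact set $\overline{B_{8R_0}}$. There $u$ is continuous, so the integral is bounded by a constant depending on $u$ and $R_0$. We read the constant $C$ as allowed to depend on $u$ here, as is implicit in the statement since $u$ is fixed throughout the section.

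If $r>8R_0$, we write
\[
B_r\setminus B_{2R_0}\subset \overline{B_{8R_0}}\cup\bigcup_{j=0}^{J}\big(B_{4^{j+1}\cdot 8R_0}\setminus B_{4^j\cdot8R_0}\big),
\]
where $J$ is the least integer with $4^{J+1}\cdot 8R_0\ge r$. Each annulus is covered by \eqref{eq_ann} with radius $r_j=4^j\cdot 8R_0\ge 8R_0$. Summing over $j$ gives
\[
\int_{B_r\setminus B_{2R_0}}|u|^{q+\delta}\,dx\le C+C\sum_{j=0}^{J} r_j^{\,N-m_\delta}.
\]
The sum is geometric, and how we bound it depends on the sign of $N-m_\delta$:
\begin{enumerate}[label=(\alph*)]
\item If $N-m_\delta>0$, the sum is dominated by its last term, which is comparable to $r^{N-m_\delta}$ because $r_J\le r$.
\item If $N-m_\delta<0$, the sum is bounded by a constant times $R_0^{N-m_\delta}$.
\item If $N-m_\delta=0$, each term equals $1$ and there are about $\log(r/R_0)$ of them, so the bound would carry a logarithmic factor.
\end{enumerate}

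The main point to check is the borderline case (c). As stated, $C(1+r^{N-m_\delta})$ with $N=m_\delta$ is just a constant, and dyadic summation alone does not give that. We would handle it by not summing at all: apply Lemma \ref{lem_stab} once with a single cutoff $\eta$ equal to $1$ on $B_r\setminus B_{2R_0}$, vanishing near $\overline{B_{R_0}}$ and outside $B_{2r}$. Such an $\eta$ has two transition layers. The inner one contributes a constant depending on $R_0$, and the outer one contributes $C r^{N-m_\delta}$. Both contributions are controlled without any summation.

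This single-cutoff argument in fact covers every case at once, so we would present it as the proof of \eqref{eq_ball}. The dyadic decomposition then serves only as a heuristic check. The only care needed is that the inner layer sits in $B_{2R_0}\setminus \overline{B_{R_0}}$, which lies outside $K$.
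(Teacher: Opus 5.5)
Your proof is correct and is essentially the paper's. For \eqref{eq_ann} you use the same cutoff supported in $B_{8r}\setminus\overline{B_{r/2}}$. For \eqref{eq_ball} your final argument is the paper's single cutoff: an inner layer in $B_{2R_0}\setminus\overline{B_{R_0}}$ contributing $CR_0^{N-m_\delta}$ and an outer layer in $B_{2r}\setminus B_r$ contributing $Cr^{N-m_\delta}$. This is exactly what is needed at the borderline $N=m_\delta$ that Proposition \ref{prop_decay} later uses. The dyadic detour and the $u$-dependent constant for $r\le 8R_0$ are unnecessary, since the statement only concerns $r\ge 8R_0$ and the single-cutoff bound depends only on $N,p,q,\delta,R_0$.
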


\begin{proof}
For \eqref{eq_ann}, choose $\eta\in C_c^\infty(B_{8r}\setminus \overline{B_{r/2}})$ with $0\le\eta\le1$, $\eta\equiv1$ on $B_{4r}\setminus B_r$ and $|\nabla\eta|\le C/r$. This is admissible because $r/2\ge 4R_0>R_0$. Then $\int|\nabla\eta|^{m_\delta}\le C r^{-m_\delta}|B_{8r}|\le Cr^{N-m_\delta}$, and Lemma \ref{lem_stab} gives \eqref{eq_ann}.

For \eqref{eq_ball}, choose $\eta\in C_c^\infty(B_{2r}\setminus\overline{B_{R_0}})$ with $\eta\equiv1$ on $B_r\setminus B_{2R_0}$, $|\nabla\eta|\le C/R_0$ on $B_{2R_0}\setminus B_{R_0}$ and $|\nabla\eta|\le C/r$ on $B_{2r}\setminus B_r$. The inner annulus contributes at most $CR_0^{N-m_\delta}\le C$ and the outer one at most $Cr^{N-m_\delta}$.
\end{proof}

The next lemma is where the Joseph--Lundgren exponent appears. It says that the estimates of Corollary \ref{cor_annuli} reach the scaling exponent $\frac{N(q-p+1)}{p}$ exactly when $q<q_c(N,p)$.

\begin{lemma}\label{lem_qc}
Let $N>p>1$ and define, for $q>p-1$,
\[
G(q)=q+\delta_0(q)-\frac{N(q-p+1)}{p},
\qquad \delta_0(q)=\frac{2q-(p-1)+2\sqrt{q(q-p+1)}}{p-1}.
\]
Then $G$ is strictly concave on $(p-1,+\infty)$ and $\lim_{q\to(p-1)^+}G(q)=p>0$. Moreover:
\begin{enumerate}[label=(\roman*)]
\item if $(p-1)N\le p(p+3)$, then $G(q)>0$ for every $q>p-1$;
\item if $(p-1)N> p(p+3)$, then $G$ has a unique zero in $(p-1,+\infty)$, it is given by the formula \eqref{eq_qc} for $q_c(N,p)$, and $G(q)>0$ for $p-1<q<q_c(N,p)$ while $G(q)<0$ for $q>q_c(N,p)$.
\end{enumerate}
\end{lemma}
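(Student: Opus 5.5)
Write $s=q-p+1>0$, so that $q=s+p-1$. Then
\[
G=\frac{(p+1)q-(p-1)+2\sqrt{q(q-p+1)}}{p-1}-\frac{Ns}{p}.
\]

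\emph{Concavity and the limit.} Everything except $2\sqrt{q(q-p+1)}/(p-1)$ is affine in $q$. The function $h(q)=\sqrt{q(q-p+1)}$ is the geometric mean of the two positive affine functions $q$ and $q-p+1$, hence concave. Strict concavity follows from
\[
h''=-\frac{(p-1)^2}{4h^3}<0 .
\]
This comes from writing $h^2=q^2-(p-1)q$, so that $hh'=q-\frac{p-1}{2}$ and $h'^2+hh''=1$, which gives
\[
h^2h''=h^2-\Bigl(q-\tfrac{p-1}{2}\Bigr)^2=-\frac{(p-1)^2}{4}.
\]
As $q\to(p-1)^+$ we have $h\to0$ and $\delta_0\to\frac{2(p-1)-(p-1)}{p-1}=1$, so $G\to p-1+1=p$.

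\emph{Behavior at infinity.} Since $h=q-\frac{p-1}{2}+O(1/q)$, we get
\[
G(q)=q\Bigl(1+\frac{4}{p-1}-\frac Np\Bigr)+O(1).
\]
The slope is $\frac{p(p+3)-N(p-1)}{p(p-1)}$.

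\emph{Case (i): $(p-1)N<p(p+3)$.} The slope is positive and $G$ is concave with positive left limit. A concave function that tends to $+\infty$ is eventually increasing, so its minimum over $(p-1,\infty)$ is its infimum near the left endpoint, namely $p>0$. Hence $G>0$ throughout.

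\emph{Case (i), equality.} When the slope vanishes, $G'(q)=\frac{2}{p-1}\bigl(h'-1\bigr)$, and
\[
h'=\frac{q-\frac{p-1}{2}}{h}>1,
\]
because $h^2<\bigl(q-\frac{p-1}{2}\bigr)^2$. So $G$ is strictly decreasing toward its limit. That limit is the constant term of the expansion, $\frac{-(p-1)-(p-1)}{p-1}+\frac{N(p-1)}{p}=-2+N\frac{p-1}{p}=p+1>0$, using $N(p-1)=p(p+3)$. Hence $G>p+1>0$.

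\emph{Case (ii).} The slope is negative, so $G\to-\infty$. A strictly concave function that starts positive and tends to $-\infty$ has exactly one zero, with $G>0$ before it and $G<0$ after it. Indeed, after its first zero a concave function cannot come back up, since it would then have to stay bounded below.

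\emph{Identifying the zero with \eqref{eq_qc}.} This is the main computational obstacle. Set $G=0$ and isolate the root:
\[
2\sqrt{q(q-p+1)}=(p-1)\frac{Ns}{p}-(p+1)q+(p-1)=:L(q).
\]
Squaring gives a quadratic in $q$. I would carry this out in terms of $s$ and check that its roots are given by the quotient in \eqref{eq_qc} with the $+$ sign in front of $2p^2\sqrt{(p-1)(N-1)}$. The discriminant should reduce to a multiple of $p^4(p-1)(N-1)$, and the leading coefficient to a multiple of $(N-p)\bigl((p-1)N-p(p+3)\bigr)$.

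Squaring may introduce a spurious root, so I would then discard it. I would check that the $+$ root satisfies $L\ge0$, or else that it is the one exceeding $p-1$. The other root is excluded by uniqueness once we know the correct root lies in $(p-1,\infty)$ with $L\ge0$.

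Rather than expanding blindly, a cleaner route is to verify directly that the proposed $q_c$ satisfies the squared equation. Alternatively, one can use the known equivalence with the Cabré--Sanchón condition. I expect the algebra of this identification, including the sign selection, to be the main work.
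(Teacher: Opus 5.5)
\textbf{Main gap: the identification with \eqref{eq_qc} is not carried out.} Your qualitative analysis follows the paper's route. But the lemma explicitly asserts that the zero of $G$ is given by \eqref{eq_qc}, and that identification is most of the paper's proof. You only announce it, and one of your two proposed ways of discarding the spurious root does not work.

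Write $B=(p-1)N-p$, $a=B-p^2$ and $b=(p-1)B$. Then your $L$ is $L(q)=(aq-b)/p$. Squaring gives
\[
P(q)=(a^2-4p^2)q^2+\bigl(4p^2(p-1)-2ab\bigr)q+b^2=0,
\]
with $a^2-4p^2=(p-1)(N-p)\bigl((p-1)N-p(p+3)\bigr)>0$ in case (ii). Now $P(p-1)=p^4(p-1)^2>0$, $P(b/a)=-4p^4(p-1)b/a^2<0$, and $b/a-(p-1)=(p-1)p^2/a>0$. Hence \emph{both} roots of $P$ always lie in $(p-1,\infty)$; for $p=2$, $N=11$ they are about $1.30$ and $6.92$. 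So ``the one exceeding $p-1$'' selects nothing.

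Your other criterion, $L\ge0$ at the chosen root, is the right one, but it needs an argument. The paper's device is this: the true zero satisfies $L=2\sqrt{q(q-p+1)}>0$, that is $q>b/a$. Since $P(b/a)<0$, the point $b/a$ separates the two roots, so the true zero is the larger root.

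After that, the computation still has to be done. The reduced discriminant is $4p^2(p-1)^2(B^2-aB+p^2)=4p^4(p-1)^3(N-1)$. Using $ab=(p-1)(B^2-p^2B)$, the larger root is
\[
q_+=\frac{B^2-p^2B-2p^2+2p^2\sqrt{(p-1)(N-1)}}{(N-p)\bigl((p-1)N-p(p+3)\bigr)},
\]
which becomes \eqref{eq_qc} after substituting $B$. Appealing to Cabr\'e--Sanch\'on cannot replace this step. Their threshold is defined through semi-stability of a singular solution, not through $G$, and relating the two is exactly the computation at issue.

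\textbf{Sign error in the equality subcase of (i).} From $h'>1$ you get $G'=\frac{2}{p-1}(h'-1)>0$, so $G$ \emph{increases} from $p$ to $p+1$. Your claims that $G$ is decreasing and that $G>p+1$ are false. For example, with $p=2$, $N=10$ one has $G(q)=4-2q+2\sqrt{q(q-1)}$, which rises from $2$ to $3$. The needed conclusion $G>p>0$ still follows at once. The paper avoids splitting case (i) altogether: $\lim_{q\to\infty}G'\ge0$ together with $G'$ strictly decreasing already gives $G'>0$.

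\textbf{Minor points.}
\begin{itemize}
\item Your displayed identity should read $h^3h''=h^2-\bigl(q-\frac{p-1}{2}\bigr)^2$.
\item In case (ii), the reason a concave $G$ cannot return to $[0,\infty)$ after its first zero is the chord inequality with a point to the left where $G>0$. Boundedness from below is not the reason.
\end{itemize}
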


\begin{proof}
Differentiating,
\[
G'(q)=1+\frac{2+h(q)}{p-1}-\frac{N}{p},
\qquad h(q)=\frac{2q-(p-1)}{\sqrt{q(q-p+1)}} .
\]
Since
\[
h(q)^2=\frac{4q^2-4(p-1)q+(p-1)^2}{q^2-(p-1)q}=4+\frac{(p-1)^2}{q^2-(p-1)q}
\]
and $q\mapsto q^2-(p-1)q$ is strictly increasing on $(p-1,+\infty)$, the function $h$ is strictly decreasing there, with $h(q)\to+\infty$ as $q\to(p-1)^+$ and $h(q)\to 2$ as $q\to+\infty$. Hence $G'$ is strictly decreasing, so $G$ is strictly concave, and
\[
\lim_{q\to+\infty}G'(q)=1+\frac{4}{p-1}-\frac Np=\frac{p+3}{p-1}-\frac Np .
\]
Also $G(p-1^+)=(p-1)+\frac{2(p-1)-(p-1)}{p-1}-0=p$.

If $(p-1)N\le p(p+3)$, then $\lim_{q\to\infty}G'\ge 0$, and since $G'$ is strictly decreasing with this limit we get $G'>0$ on $(p-1,+\infty)$. Hence $G$ is increasing and $G>p>0$, which is (i).

If $(p-1)N>p(p+3)$, then $\lim_{q\to\infty}G'<0$, so $G(q)\to-\infty$. Being concave with $G(p-1^+)=p>0$, the function $G$ has exactly one zero $q_c$ in $(p-1,+\infty)$, is positive to its left and negative to its right.

It remains to identify $q_c$. Put $B=(p-1)N-p$, $a=B-p^2$ and $b=(p-1)B$. In case (ii) we have $(p-1)N>p(p+3)>p$, so $B>0$ and $b>0$; also $a-2p=(p-1)N-p(p+3)>0$, so $a>2p>0$. A computation gives
\[
(p-1)\left(\frac{N(q-p+1)}{p}-q\right)-\big(2q-(p-1)\big)=\frac{aq-b}{p},
\]
so $G(q)=0$ is equivalent to
\begin{equation}\label{eq_root}
\frac{aq-b}{p}=2\sqrt{q(q-p+1)} .
\end{equation}
In particular $aq-b\ge 0$ at the zero, and squaring \eqref{eq_root} shows that $q_c$ is a root of
\[
P(q):=(a^2-4p^2)q^2+\big(4p^2(p-1)-2ab\big)q+b^2 .
\]
Here
\begin{align*}
a^2-4p^2
&=(a-2p)(a+2p)\\
&=\big((p-1)N-p(p+3)\big)\big((p-1)N-p(p-1)\big)\\
&=(p-1)(N-p)\big((p-1)N-p(p+3)\big)>0.
\end{align*}
and, using $a=B-p^2$ and $b=(p-1)B$,
\[
\left(\frac{4p^2(p-1)-2ab}{2}\right)^2-(a^2-4p^2)b^2
=4p^2\left(b^2-(p-1)ab+p^2(p-1)^2\right)
=4p^2(p-1)^2\left(B^2-aB+p^2\right).
\]
Since $a=B-p^2$ we have $B^2-aB+p^2=p^2B+p^2=p^2\big((p-1)N-p+1\big)=p^2(p-1)(N-1)$, so the quantity above equals $4p^4(p-1)^3(N-1)$ and its square root is $2p^2(p-1)\sqrt{(p-1)(N-1)}$. Also $ab=(p-1)(B^2-p^2B)$, hence
\[
\frac{2ab-4p^2(p-1)}{2}+2p^2(p-1)\sqrt{(p-1)(N-1)}
=(p-1)\left(B^2-p^2B-2p^2+2p^2\sqrt{(p-1)(N-1)}\right).
\]
Therefore the larger root of $P$ is
\[
\frac{B^2-p^2B-2p^2+2p^2\sqrt{(p-1)(N-1)}}{(N-p)\big((p-1)N-p(p+3)\big)} .
\]
Substituting $B=(p-1)N-p$ in $-p^2B-2p^2=-p^2(p-1)N+p^3-2p^2=-p^2(p-1)N+p^2(p-2)$ shows that this expression coincides with the right hand side of \eqref{eq_qc}. It is the larger root that corresponds to $q_c$. Indeed, \eqref{eq_root} gives $aq_c-b=2p\sqrt{q_c(q_c-p+1)}>0$, so $q_c>b/a$, while a direct computation using $(p-1)a-b=(p-1)(B-p^2)-(p-1)B=-(p-1)p^2$ gives
\[
P\!\left(\frac ba\right)=\frac{4p^2b}{a^2}\big((p-1)a-b\big)=-\frac{4p^4(p-1)b}{a^2}<0 .
\]
Since $P$ has positive leading coefficient, the point $b/a$ lies strictly between the two roots of $P$, so any root of $P$ larger than $b/a$ is the larger one. This proves (ii).
\end{proof}

\begin{remark}\label{rem_qcbig}
Writing $\delta_1=\frac{N(q-p+1)}{p}-q$ one has $G(q)=\delta_0(q)-\delta_1(q)$. At $q=p^*-1$ we have $\delta_1=1<\delta_0$, so $G(p^*-1)>0$ and therefore $q_c(N,p)>p^*-1$, as announced in Section \ref{sec_intro}.
\end{remark}

\begin{remark}
Lemma \ref{lem_qc} is implicit in \cite{DFSV2009}, where $q_c(N,p)$ arises as the threshold beyond which the stability estimates stop giving a negative power of $r$. We have included the computation because the identification of the threshold with the closed formula \eqref{eq_qc} is used quantitatively below.
\end{remark}

\section{Decay at the self-similar rate}\label{sec_decay}

The first limit in the next statement is \cite[Thm.~1.10]{DFSV2009} when $p>2$. We prove it for every $p\ge2$, and we prove at the same time the gradient decay, which is not stated there and which Section \ref{sec_boot} needs.

\begin{proposition}[{compare \cite[Thm.~1.10]{DFSV2009}}]\label{prop_decay}
Let $N>p\ge 2$ and $p^*-1\le q<q_c(N,p)$, and let $u\in C^1(\mathbb{R}^N)$ be a weak solution of \eqref{eq_main} that is stable outside a compact set. Then
\begin{equation}\label{eq_decay}
\lim_{|x|\to+\infty}|x|^{\alpha}|u(x)|=0
\qquad\text{and}\qquad
\lim_{|x|\to+\infty}|x|^{\alpha+1}|\nabla u(x)|=0 .
\end{equation}
\end{proposition}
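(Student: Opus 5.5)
The plan is a rescaling argument on annuli. Lemma \ref{lem_qc} turns the integral estimates of Corollary \ref{cor_annuli} into a vanishing scaled $L^k$ norm with $k$ above the Serrin threshold, and Lemma \ref{lem_serrin} together with Lemma \ref{lem_grad} turns this into pointwise decay.

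\textbf{Choice of $\delta$.} Since $q<q_c(N,p)$, Lemma \ref{lem_qc} gives $G(q)>0$, that is $\delta_0>\delta_1:=\frac{N(q-p+1)}{p}-q$. At $q\ge p^*-1$ we have $\delta_1\ge1$, by Remark \ref{rem_qcbig} and monotonicity of $\delta_1$ in $q$, because $N>p$. Choose $\delta\in(\max(1,\delta_1),\delta_0)$. Then $q+\delta>\frac{N(q-p+1)}{p}$, which is equivalent to $N-m_\delta<0$.

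\textbf{Rescaling.} For $r\ge 8R_0$ set $v_r(y)=r^{\alpha}u(ry)$ on $\Omega=B_4\setminus\overline{B_1}$; it solves \eqref{eq_main}. A change of variables in \eqref{eq_ann} gives
\[
\int_{\Omega}|v_r|^{q+\delta}\,dy=r^{\alpha(q+\delta)-N}\int_{B_{4r}\setminus B_r}|u|^{q+\delta}\,dx\le C\,r^{\alpha(q+\delta)-N+N-m_\delta}=C,
\]
since $\alpha(q+\delta)=m_\delta$. This is only boundedness, not smallness. To get $o(1)$ I will use the freedom in $\delta$. Apply the same computation with a second admissible $\delta'\in(\delta,\delta_0)$, and use \eqref{eq_ann} differently: split the annulus $B_{4r}\setminus B_r$ into the tail region of $u$, and obtain $o(1)$ from the summability of $\int_{|x|>8R_0}|u|^{q+\delta}|x|^{m_\delta-N}\,dx$. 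That summability follows by summing \eqref{eq_ann} over dyadic annuli with weight $r^{m_\delta-N}$ at exponent $\delta'$ combined with Hölder. The cleaner route I would try first is: since $N-m_{\delta'}<0$, the integral $\int_{|x|>8R_0}|u|^{q+\delta'}\,dx$ is finite. Hölder interpolation between the $\delta$ and $\delta'$ estimates on the annulus then yields $\|v_r\|_{L^{q+\delta}(\Omega)}\to0$. Concretely, $\int_{\Omega}|v_r|^{q+\delta'}=r^{m_{\delta'}-N}\int_{B_{4r}\setminus B_r}|u|^{q+\delta'}$. The factor $r^{m_{\delta'}-N}$ grows, so this alone is not enough. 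Instead I use the tail of the convergent integral $\int|u|^{q+\delta''}$ for some $\delta''$ with $N-m_{\delta''}<0$ together with scaling, the scaled quantity being exactly $r^{m_{\delta''}-N}\int_{\text{annulus}}$. I expect this extraction of smallness to be the main obstacle. The fallback is a contradiction argument: if $|x_j|^{\alpha}|u(x_j)|\ge c>0$ along $|x_j|\to\infty$, the rescalings $v_{r_j}$ with $r_j=|x_j|/2.5$ are bounded in $C^{1,\beta}(\overline{\Omega'})$ by the steps below. They converge to a stable (outside nothing) solution on an annulus, and the stability estimate with the limiting $\delta$ forces the limit to vanish, a contradiction.

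\textbf{Pointwise bounds.} Write the equation as $-\Delta_p v_r=c_r|v_r|^{p-2}v_r$ with $c_r=|v_r|^{q-p+1}$. Then $\|c_r\|_{L^k(\Omega)}$ is uniformly bounded for $k=\frac{q+\delta}{q-p+1}>\frac Np$. By Lemma \ref{lem_serrin},
\[
\sup_{\Omega'}|v_r|\le C\|v_r\|_{L^p(\Omega)}\le C\|v_r\|_{L^{q+\delta}(\Omega)},
\]
using $q+\delta>p$. Hence $\sup_{\Omega'}|v_r|\to0$, which is the first limit in \eqref{eq_decay} after undoing the scaling, since $|x|\in(2r,3r)$ covers all large $|x|$. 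For the gradient, cover $\Omega'$ by balls of fixed radius, rescaled to $B_2$, and apply Lemma \ref{lem_grad} with $g=|v_r|^{q-1}v_r$. This bounds $\sup|\nabla v_r|$ by $C\big(\sup|v_r|+\sup|v_r|^{q/(p-1)}\big)\to0$ on a slightly smaller annulus. After shrinking $\Omega'$ and relabelling radii, this gives $|x|^{\alpha+1}|\nabla u(x)|\to0$.
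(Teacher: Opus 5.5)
There is a genuine gap, and it is the one you flag yourself: the proposal never shows that the rescaled solutions become small. Every exponent you use has $\delta\ne\delta_1$, and for such exponents the scaled norms are bounded but not small.

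For $\delta>\delta_1$ the identity $\int_{\Omega}|v_r|^{q+\delta}\,dy=r^{m_\delta-N}\int_{B_{4r}\setminus B_r}|u|^{q+\delta}\,dx$ multiplies the vanishing tail of the convergent integral $\int_{|x|>8R_0}|u|^{q+\delta}$ by a factor that tends to infinity, so nothing follows.

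H\"older interpolation between two such exponents does not help either. With $\vartheta=\frac{q+\delta}{q+\delta'}$ one has $\vartheta\, m_{\delta'}=m_\delta$, so \eqref{eq_ann} at $\delta'$ gives back exactly $\int_{B_{4r}\setminus B_r}|u|^{q+\delta}\le Cr^{N-m_\delta}$. In particular, each dyadic term of your weighted integral $\int|u|^{q+\delta}|x|^{m_\delta-N}$ is of order one, and the sum is not shown to converge.

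The fallback compactness argument has the same hole. A limit $V$ of the rescalings that is stable on an annulus, or even on $\mathbb R^N\setminus\{0\}$, is not forced to vanish by Lemma~\ref{lem_stab}. For $\delta\ne\delta_1$ the right hand side $\int|\nabla\eta|^{m_\delta}$ blows up either at the origin or at infinity as $\eta$ exhausts the punctured space. For $\delta=\delta_1$ it only gives $\int|V|^{s_1}<\infty$, not $V\equiv0$.

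The missing idea is to use the scale-critical value $\delta=\delta_1$ itself. You showed it is admissible ($1\le\delta_1<\delta_0$) but then discarded it by taking $\delta>\max(1,\delta_1)$. For $s_1=q+\delta_1=\frac{N(q-p+1)}{p}$ one has $N-m_{\delta_1}=0$. The second estimate \eqref{eq_ball} of Corollary~\ref{cor_annuli}, which uses one cutoff across all scales rather than a sum over annuli, then gives $\int_{B_r\setminus B_{2R_0}}|u|^{s_1}\le 2C$ uniformly in $r$. Hence $\int_{\mathbb R^N\setminus B_{2R_0}}|u|^{s_1}<\infty$.

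Because $\alpha s_1=N$, the scaled norm $\int_\Omega|v_r|^{s_1}$ equals $\int_{B_{4r}\setminus B_r}|u|^{s_1}$. This is the tail of a convergent integral, so it tends to $0$. Keep a $\delta\in(\delta_1,\delta_0)$ only for the uniform bound on $\|c_r\|_{L^k(\Omega)}$ with $k>N/p$ needed in Lemma~\ref{lem_serrin}. Since $s_1\ge p^*>p$, H\"older then gives $\sup_{\Omega'}|v_r|\le C\|v_r\|_{L^p(\Omega)}\le C\|v_r\|_{L^{s_1}(\Omega)}\to0$. With this in place, the rest of your argument, including the gradient bound via Lemma~\ref{lem_grad}, goes through as in the paper.
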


\begin{proof}
We keep the notation of Section \ref{sec_stab}: $K$ is a compact set outside which $u$ is stable, and $R_0\ge1$ satisfies $K\subset B_{R_0}$. Put
\[
s_1=\frac{N(q-p+1)}{p},\qquad \delta_1=s_1-q .
\]
The inequality $\delta_1\ge 1$ is equivalent to $N(q-p+1)\ge p(q+1)$, that is to $q(N-p)\ge N(p-1)+p$, that is to $q\ge p^*-1$. The inequality $\delta_1<\delta_0$ is exactly $G(q)>0$ in the notation of Lemma \ref{lem_qc}, hence it holds because $q<q_c(N,p)$. Therefore $\delta_1\in[1,\delta_0)$, and the corresponding exponent in Corollary \ref{cor_annuli} is
\[
N-\frac{p(q+\delta_1)}{q-p+1}=N-\frac{p s_1}{q-p+1}=0 .
\]
By \eqref{eq_ball},
\begin{equation}\label{eq_Ls1}
\int_{\mathbb{R}^N\setminus B_{2R_0}}|u|^{s_1}\,dx\le C<+\infty .
\end{equation}
Since $\delta_1<\delta_0$ we may fix $\bar\delta\in(\delta_1,\delta_0)$ and set
\[
\eta_0=\frac{p(q+\bar\delta)}{q-p+1}-N>0 .
\]
By \eqref{eq_ann}, for every $r\ge 8R_0$,
\begin{equation}\label{eq_Lsbar}
\int_{B_{4r}\setminus B_r}|u|^{q+\bar\delta}\,dx\le C\,r^{-\eta_0} .
\end{equation}

For $r\ge 8R_0$ define $u_r(x)=r^{\alpha}u(rx)$ for $x\in B_4\setminus\overline{B_1}$. A change of variables shows that $u_r\in C^1$ is a weak solution of $-\Delta_p u_r=|u_r|^{q-1}u_r$ there. Since $\alpha(q+\bar\delta)=\frac{p(q+\bar\delta)}{q-p+1}=N+\eta_0$ and $\alpha s_1=N$, we get from \eqref{eq_Lsbar} and \eqref{eq_Ls1}
\begin{equation}\label{eq_scaled}
\int_{B_4\setminus B_1}|u_r|^{q+\bar\delta}\,dx=r^{\alpha(q+\bar\delta)-N}\int_{B_{4r}\setminus B_r}|u|^{q+\bar\delta}\,dx\le C,
\end{equation}
\begin{equation}
\int_{B_4\setminus B_1}|u_r|^{s_1}\,dx=\int_{B_{4r}\setminus B_r}|u|^{s_1}\,dx=:\omega(r)^{s_1},
\end{equation}
and $\omega(r)\to 0$ as $r\to+\infty$ because the integral in \eqref{eq_Ls1} converges and the annuli $B_{4r}\setminus B_r$ escape to infinity.

Write the equation for $u_r$ as $-\Delta_p u_r=c_r(x)|u_r|^{p-2}u_r$ with $c_r=|u_r|^{q-p+1}$, and set
\[
k=\frac{q+\bar\delta}{q-p+1}>\frac{q+\delta_1}{q-p+1}=\frac{s_1}{q-p+1}=\frac{N}{p} .
\]
Then $\|c_r\|_{L^k(B_4\setminus B_1)}^{k}=\int_{B_4\setminus B_1}|u_r|^{q+\bar\delta}\le C$ by \eqref{eq_scaled}, uniformly in $r$. Lemma \ref{lem_serrin} therefore gives a constant $C$ independent of $r$ with $\sup_{B_3\setminus B_2}|u_r|\le C\|u_r\|_{L^p(B_4\setminus B_1)}$. Since $q\ge p^*-1$ we have $s_1\ge\frac Np\cdot\frac{p^2}{N-p}=p^*>p$, so H\"older's inequality on the bounded set $B_4\setminus B_1$ turns this into
\[
\sup_{B_3\setminus B_2}|u_r|\le C\,\|u_r\|_{L^{s_1}(B_4\setminus B_1)}=C\,\omega(r) .
\]
Undoing the scaling, $\sup_{2r\le|x|\le 3r}|x|^{\alpha}|u(x)|\le 3^{\alpha}\sup_{B_3\setminus B_2}|u_r|\le C\omega(r)\to0$, and since the annuli $\{2r\le|x|\le3r\}$ cover a neighborhood of infinity as $r$ ranges over $[8R_0,\infty)$, the first limit in \eqref{eq_decay} follows.

For the gradient, set $M_r=\sup_{2r\le|x|\le 3r}|u|$, so that $r^{\alpha}M_r\to0$. Let $x_0$ with $|x_0|=\frac52 r$ and apply Lemma \ref{lem_grad} to $v(y)=u(x_0+\frac r4 y)$ on $B_2$, which solves $-\Delta_p v=(r/4)^p|v|^{q-1}v$. We obtain
\[
\frac r4\,|\nabla u(x_0)|\le C\left(M_r+\big((r/4)^pM_r^q\big)^{\frac1{p-1}}\right).
\]
Since $r^{\alpha}M_r\to 0$ we have
\[
r^pM_r^{q-p+1}=\big(r^{\alpha}M_r\big)^{q-p+1}\longrightarrow0.
\]
Hence, for $r$ large,
\[
\big(r^pM_r^q\big)^{1/(p-1)}
=\big(r^pM_r^{q-p+1}\big)^{1/(p-1)}M_r\le M_r.
\]
Therefore $|\nabla u(x_0)|\le CM_r/r$, and $|x_0|^{\alpha+1}|\nabla u(x_0)|\le C r^{\alpha}M_r\to0$. This proves the second limit in \eqref{eq_decay}.
\end{proof}

\section{From self-similar decay to \texorpdfstring{$p$-harmonic}{p-harmonic} decay}\label{sec_boot}

This section contains the new ingredient. Recall $\alpha=\frac{p}{q-p+1}$ and $\gamma=\frac{N-p}{p-1}$, and note that
\begin{equation}\label{eq_alphagamma}
\alpha<\gamma \iff p(p-1)<(N-p)(q-p+1) \iff q>\frac{N(p-1)}{N-p},
\end{equation}
which holds in particular when $q>p^*-1$.

\begin{lemma}\label{lem_super}
Let $0<\alpha<\gamma$ with $\gamma=\frac{N-p}{p-1}$, let $A,B>0$ and set $w(x)=A|x|^{-\alpha}+B|x|^{-\gamma}$ on $\mathbb{R}^N\setminus\{0\}$. Then $w$ is smooth there and
\begin{equation}\label{eq_superid}
-\Delta_p w(x)=\left(1+\frac{\gamma B}{\alpha A}|x|^{\alpha-\gamma}\right)^{p-2}
K\,A^{p-1}\,|x|^{-1-(\alpha+1)(p-1)},
\qquad K:=\alpha^{p-1}(p-1)(\gamma-\alpha)>0 .
\end{equation}
If moreover $p\ge2$ and $\alpha=\frac{p}{q-p+1}$, then $1+(\alpha+1)(p-1)=\alpha q$ and
\begin{equation}\label{eq_superineq}
-\Delta_p w(x)\ge K\,A^{p-1}\,|x|^{-\alpha q}\qquad\text{for } x\ne0 .
\end{equation}
\end{lemma}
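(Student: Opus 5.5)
The plan is a direct computation in the radial variable. Since $w$ is radial, $w(x)=\phi(r)$ with $r=|x|$ and $\phi(r)=Ar^{-\alpha}+Br^{-\gamma}$, and for radial functions
\[
\Delta_p w=r^{1-N}\big(r^{N-1}|\phi'|^{p-2}\phi'\big)'.
\]
The first step is to record
\[
\phi'(r)=-\alpha Ar^{-\alpha-1}\Big(1+\tfrac{\gamma B}{\alpha A}r^{\alpha-\gamma}\Big)<0 .
\]
This gives
\[
|\phi'|^{p-2}\phi'=-(\alpha A)^{p-1}r^{-(\alpha+1)(p-1)}\,g(r)^{p-1},
\qquad g(r)=1+\tfrac{\gamma B}{\alpha A}r^{\alpha-\gamma}.
\]
Smoothness away from the origin is clear, because $\phi'\neq0$ there.

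Next we differentiate $r^{N-1}|\phi'|^{p-2}\phi'$. It is convenient to write $\beta=(\alpha+1)(p-1)$ and to use $N-1-(\gamma+1)(p-1)=0$, which is just the definition of $\gamma$. Then
\[
r^{N-1}(\alpha A)^{p-1}r^{-\beta}g^{p-1}
=(\alpha A)^{p-1}\,r^{N-1-\beta}g^{p-2}\Big(1+\tfrac{\gamma B}{\alpha A}r^{\alpha-\gamma}\Big).
\]
Rather than expanding this, the cleanest route is to factor $g^{p-2}$ out after the product rule. Applying the product rule to $r^{N-1-\beta}g^{p-1}$ gives
\[
(N-1-\beta)r^{N-2-\beta}g^{p-1}+(p-1)r^{N-1-\beta}g^{p-2}g'.
\]
Here $g'=\tfrac{\gamma B}{\alpha A}(\alpha-\gamma)r^{\alpha-\gamma-1}$ and $N-1-\beta=(p-1)(\gamma-\alpha)$. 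Substituting, the terms containing $B$ combine as
\[
(p-1)\tfrac{\gamma B}{\alpha A}r^{\alpha-\gamma}\big[(\gamma-\alpha)+(\alpha-\gamma)\big]=0 ,
\]
so the result is exactly $(p-1)(\gamma-\alpha)r^{N-2-\beta}g^{p-2}$. Multiplying by $(\alpha A)^{p-1}$ and by $r^{1-N}$ (the sign is positive after the two minus signs) gives \eqref{eq_superid}, since $-1-\beta=-1-(\alpha+1)(p-1)$.

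This cancellation of the $B$-terms is the only nonroutine point. It expresses the fact that $r^{-\gamma}$ is $p$-harmonic, and I would organize the computation around it.

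For \eqref{eq_superineq} there are two facts to check. The first is the identity $1+(\alpha+1)(p-1)=\alpha q$. It is equivalent to $p+\alpha(p-1)=\alpha q$, that is $p=\alpha(q-p+1)$, which is the definition of $\alpha$. The second is the factor $g^{p-2}$. We have $g\ge1$ because $A,B,\alpha,\gamma>0$, and $p-2\ge0$, so $g^{p-2}\ge1$. Finally $K>0$ because $\alpha<\gamma$. Together these give the inequality.
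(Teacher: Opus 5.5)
Your proposal is correct and is essentially the paper's own proof: the same radial formula, the same factorization $-\phi'=\alpha A r^{-\alpha-1}(1+t)$, and the same cancellation of the $B$-terms via $N-1-(\alpha+1)(p-1)=(p-1)(\gamma-\alpha)$ (the paper names this exponent $m$ and writes $t'=-\frac{m}{p-1}\frac{t}{r}$). One cosmetic point: your $\beta=(\alpha+1)(p-1)$ clashes with the paper's later $\beta=(\alpha+1)(p-2)$ in Section \ref{sec_sing}, so a different letter would avoid confusion.
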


\begin{proof}
Write $r=|x|$. Then $w'(r)<0$ and
\[
-w'(r)=\alpha A r^{-\alpha-1}+\gamma B r^{-\gamma-1}=\alpha A r^{-\alpha-1}(1+t(r)),
\qquad t(r)=\frac{\gamma B}{\alpha A}r^{\alpha-\gamma}.
\]
For radial functions, $-\Delta_p w=r^{1-N}\frac{d}{dr}\big[r^{N-1}(-w')^{p-1}\big]$. Set
\[
m=N-1-(\alpha+1)(p-1).
\]
Since $(p-1)\gamma=N-p$, we have $m=N-1-(p-1)-\alpha(p-1)=(N-p)-\alpha(p-1)=(p-1)(\gamma-\alpha)>0$. Next,
\[
r^{N-1}(-w')^{p-1}=(\alpha A)^{p-1}r^{m}(1+t)^{p-1},
\qquad t'(r)=(\alpha-\gamma)\frac{t}{r}=-\frac{m}{p-1}\cdot\frac tr .
\]
Hence
\[
\frac{d}{dr}\Big[r^{N-1}(-w')^{p-1}\Big]
=(\alpha A)^{p-1}\left[m r^{m-1}(1+t)^{p-1}+r^{m}(p-1)(1+t)^{p-2}t'\right]
=m(\alpha A)^{p-1}r^{m-1}(1+t)^{p-2},
\]
where we used $(1+t)^{p-1}-\frac{m}{p-1}\cdot\frac{(p-1)t}{m}(1+t)^{p-2}=(1+t)^{p-2}\big[(1+t)-t\big]=(1+t)^{p-2}$. Since $m-N=-1-(\alpha+1)(p-1)$ and $m=(p-1)(\gamma-\alpha)$, this gives \eqref{eq_superid}.

Finally, $1+(\alpha+1)(p-1)=p+\alpha(p-1)$ and $\alpha q=\alpha(q-p+1)+\alpha(p-1)=p+\alpha(p-1)$, so the two exponents agree. For $p\ge2$ and $t>0$ we have $(1+t)^{p-2}\ge1$, which gives \eqref{eq_superineq}.
\end{proof}

\begin{proposition}\label{prop_boot}
Let $N>p\ge2$ and $\frac{N(p-1)}{N-p}<q$, and set
\[
\kappa=\frac{q}{p-1}>1,\qquad \theta=\frac{\log 2}{\log\kappa} .
\]
There is $\varepsilon_*=\varepsilon_*(N,p,q)>0$ with the following property. If $u\in C^1(\mathbb{R}^N)$ is a weak solution of \eqref{eq_main} with
\begin{equation}\label{eq_smalldecay}
\limsup_{|x|\to\infty}|x|^\alpha|u(x)|<\varepsilon_* ,
\end{equation}
then there are $C>0$ and $R_*>1$ such that
\begin{equation}\label{eq_pharmonic}
|u(x)|\le C\,|x|^{-\gamma}\left(\log|x|\right)^{\theta}\qquad\text{for } |x|\ge R_* .
\end{equation}
\end{proposition}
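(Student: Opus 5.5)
The plan is to compare $|u|$ on exterior domains with the supersolutions of Lemma \ref{lem_super}, and to iterate that comparison. Each step trades the amplitude of the self-similar term $|x|^{-\alpha}$ for a larger $p$-harmonic term $|x|^{-\gamma}$ carrying the boundary datum. By Lemma \ref{lem_abs}, $|u|$ is a weak subsolution of $-\Delta_p z=f$ with $f=|u|^q\in L^\infty_{\mathrm{loc}}$. If on $\{|x|>R\}$ we know $|u(x)|\le\varepsilon|x|^{-\alpha}$, then $f\le\varepsilon^q|x|^{-\alpha q}$. By \eqref{eq_superineq}, the function $w=A|x|^{-\alpha}+B|x|^{-\gamma}$ is a weak supersolution of the same problem as soon as $KA^{p-1}\ge\varepsilon^q$. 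So we take
\[
A=c_0\,\varepsilon^{\kappa},\qquad c_0=K^{-1/(p-1)},\qquad \kappa=\frac{q}{p-1}.
\]
On $\partial B_R$ we have $|u|\le\varepsilon R^{-\alpha}$, so $B=\varepsilon R^{\gamma-\alpha}$ gives $|u|\le w$ there. At infinity both $|u|$ and $w$ tend to $0$, so $\limsup(|u|-w)\le0$. Here \eqref{eq_alphagamma} is needed to have $\alpha<\gamma$ in Lemma \ref{lem_super}. Lemma \ref{lem_comp} then yields
\[
|u(x)|\le c_0\varepsilon^{\kappa}|x|^{-\alpha}+\varepsilon R^{\gamma-\alpha}|x|^{-\gamma}\qquad(|x|\ge R).
\]

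Next we iterate. Start from $\varepsilon_0<\varepsilon_*$ and $R_0$ given by \eqref{eq_smalldecay}. Given $(\varepsilon_j,R_j)$, define $R_{j+1}$ by
\[
c_0\varepsilon_j^{\kappa}R_{j+1}^{-\alpha}=\varepsilon_jR_j^{\gamma-\alpha}R_{j+1}^{-\gamma},\qquad\text{that is}\qquad R_{j+1}^{\gamma-\alpha}=R_j^{\gamma-\alpha}\,\varepsilon_j^{1-\kappa}/c_0 .
\]
For $|x|\ge R_{j+1}$ the $\gamma$-term is then at most the $\alpha$-term, so the hypothesis of the next step holds with $\varepsilon_{j+1}=2c_0\varepsilon_j^{\kappa}$. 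We choose $\varepsilon_*$ so small that $2c_0\varepsilon_*^{\kappa-1}\le\frac12$ and $\varepsilon_*^{1-\kappa}/c_0>1$. Then $\varepsilon_j$ decreases, $R_j$ increases to $\infty$, and $\log(1/\varepsilon_j)\asymp\kappa^j$, whence $\log R_j\asymp\kappa^j$ (upper and lower bounds with constants depending on $\varepsilon_0,R_0$). The key bookkeeping identity is
\[
\varepsilon_{j+1}R_{j+1}^{\gamma-\alpha}=2c_0\varepsilon_j^{\kappa}\cdot R_j^{\gamma-\alpha}\varepsilon_j^{1-\kappa}/c_0=2\,\varepsilon_jR_j^{\gamma-\alpha}.
\]
Hence $\varepsilon_jR_j^{\gamma-\alpha}=2^j\varepsilon_0R_0^{\gamma-\alpha}$.

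To conclude, take $R_j\le|x|<R_{j+1}$ and use the $j$-th comparison. On this range the $\alpha$-term is dominated by the $\gamma$-term, by the definition of $R_{j+1}$. So
\[
|u(x)|\le 2\varepsilon_jR_j^{\gamma-\alpha}|x|^{-\gamma}=C\,2^j|x|^{-\gamma}.
\]
Since $\log|x|\ge\log R_j\ge c\,\kappa^j$ for $j$ large, we get $2^j=(\kappa^j)^{\theta}\le C(\log|x|)^{\theta}$ with $\theta=\log2/\log\kappa$, which is \eqref{eq_pharmonic}. The main obstacle is the two-sided control $\log R_j\asymp\kappa^j$. One has to solve the recursion $\log(1/\varepsilon_{j+1})=\kappa\log(1/\varepsilon_j)-\log(2c_0)$ explicitly, showing it behaves like $\kappa^j$ times a positive constant when $\varepsilon_0$ is small. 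One must then sum $(\gamma-\alpha)\log R_{j+1}=(\gamma-\alpha)\log R_j+(\kappa-1)\log(1/\varepsilon_j)-\log c_0$ to get the lower bound $\log R_j\ge c\kappa^j$. A minor technical point is that $w$ is smooth on $\overline{B_R^c}$, so it is a legitimate comparison function in Lemma \ref{lem_comp}.
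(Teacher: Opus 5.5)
Your proposal is correct and follows the paper's proof essentially step for step: the same supersolution $A_n|x|^{-\alpha}+B_n|x|^{-\gamma}$ with $A_n=K^{-1/(p-1)}\varepsilon_n^{\kappa}$ and $B_n=\varepsilon_nR_n^{\gamma-\alpha}$, the same recursion $\varepsilon_{n+1}=2A_n$ with $R_{n+1}$ chosen where the two terms balance, the same identity $\varepsilon_nR_n^{\gamma-\alpha}=2^n\varepsilon_0R_0^{\gamma-\alpha}$, and the same linear recursion for $-\log\varepsilon_n$ to bound $2^n$ by $(\log|x|)^{\theta}$. Only the lower bound $\log R_j\gtrsim\kappa^j$ is actually needed, and it holds because your condition $2c_0\varepsilon_0^{\kappa-1}\le\frac12$ already makes $e_0-\bar c\ge\frac{\log 2}{\kappa-1}>0$; the paper simply normalizes this gap to be at least $1$.
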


\begin{proof}
Let $K=\alpha^{p-1}(p-1)(\gamma-\alpha)$ be the constant of Lemma \ref{lem_super}, positive by \eqref{eq_alphagamma}, and put
\[
c_1=\log\left(2K^{-\frac{1}{p-1}}\right),\qquad \bar c=\frac{c_1}{\kappa-1} .
\]
The two conditions
\begin{equation}\label{eq_eps0}
2K^{-\frac1{p-1}}\varepsilon^{\kappa-1}\le\frac12
\qquad\text{and}\qquad -\log\varepsilon\ge \bar c+1
\end{equation}
both hold for all small $\varepsilon>0$, and $K$, $\kappa$ and $\bar c$ depend only on $N$, $p$ and $q$; let $\varepsilon_*=\varepsilon_*(N,p,q)\in(0,1)$ be a value such that \eqref{eq_eps0} holds for every $\varepsilon\in(0,\varepsilon_*]$. By \eqref{eq_smalldecay} we may fix $\varepsilon_0\in(0,\varepsilon_*]$ with \eqref{eq_eps0} and then $R_0\ge3$ with
\[
|u(x)|\le\varepsilon_0|x|^{-\alpha}\qquad\text{for } |x|\ge R_0 .
\] Define recursively
\begin{equation}\label{eq_rec}
A_n=K^{-\frac{1}{p-1}}\varepsilon_n^{\kappa},\qquad
\varepsilon_{n+1}=2A_n,\qquad
R_{n+1}=\left(\frac{\varepsilon_n}{A_n}\right)^{\frac{1}{\gamma-\alpha}}R_n .
\end{equation}
By \eqref{eq_eps0} and induction, $\varepsilon_{n+1}\le\frac12\varepsilon_n$ for all $n$, so $\varepsilon_n\le 2^{-n}\varepsilon_0$ decreases to $0$. Consequently $\varepsilon_n/A_n=2\varepsilon_n/\varepsilon_{n+1}\ge4$, so $R_{n+1}\ge 4^{1/(\gamma-\alpha)}R_n$ and $R_n$ increases to $+\infty$.

\smallskip
\emph{Step 1: the inductive comparison.} We claim that
\begin{equation}\label{eq_induct}
|u(x)|\le\varepsilon_n|x|^{-\alpha}\quad\text{for }|x|\ge R_n
\end{equation}
holds for every $n\ge0$. It holds for $n=0$. Assume it for some $n$ and set
\[
B_n=\varepsilon_n R_n^{\gamma-\alpha},\qquad w_n(x)=A_n|x|^{-\alpha}+B_n|x|^{-\gamma} .
\]
By Lemma \ref{lem_super} and the definition of $A_n$,
\[
-\Delta_p w_n\ge K A_n^{p-1}|x|^{-\alpha q}=\varepsilon_n^{q}|x|^{-\alpha q}\ge|u(x)|^{q}
\qquad\text{for }|x|>R_n ,
\]
where the last inequality uses \eqref{eq_induct}. By Lemma \ref{lem_abs}, $|u|$ is a weak subsolution of $-\Delta_p z=|u|^q$ in $\{|x|>R_n\}$. On $\partial B_{R_n}$,
\[
w_n\ge B_nR_n^{-\gamma}=\varepsilon_nR_n^{-\alpha}\ge|u| ,
\]
again by \eqref{eq_induct}. Finally $|u(x)|\to0$ and $w_n(x)\to0$ as $|x|\to\infty$. Lemma \ref{lem_comp}, applied with $v=|u|$, $w=w_n$, $f=|u|^q$ and $R=R_n$, gives
\begin{equation}\label{eq_compconc}
|u(x)|\le A_n|x|^{-\alpha}+B_n|x|^{-\gamma}\qquad\text{for }|x|\ge R_n .
\end{equation}
If $|x|\ge R_{n+1}$, then $|x|^{\gamma-\alpha}\ge\frac{\varepsilon_n}{A_n}R_n^{\gamma-\alpha}$, that is $B_n|x|^{-\gamma}\le A_n|x|^{-\alpha}$, so \eqref{eq_compconc} yields $|u(x)|\le 2A_n|x|^{-\alpha}=\varepsilon_{n+1}|x|^{-\alpha}$. This is \eqref{eq_induct} for $n+1$.

\smallskip
\emph{Step 2: the bookkeeping.} Taking logarithms in \eqref{eq_rec},
\[
\log R_{n+1}-\log R_n=\frac{\log2+\log\varepsilon_n-\log\varepsilon_{n+1}}{\gamma-\alpha},
\]
and summing from $0$ to $n-1$,
\[
(\gamma-\alpha)\big(\log R_n-\log R_0\big)=n\log 2+\log\varepsilon_0-\log\varepsilon_n .
\]
Exponentiating gives the identity
\begin{equation}\label{eq_identity}
\varepsilon_nR_n^{\gamma-\alpha}=\varepsilon_0R_0^{\gamma-\alpha}\,2^{\,n}=:c_0\,2^{\,n} ,
\end{equation}
so that $B_n=c_02^n$ and, since $A_n=\frac12\varepsilon_{n+1}$,
\begin{equation}\label{eq_An}
A_nR_{n+1}^{\gamma-\alpha}=\tfrac12\varepsilon_{n+1}R_{n+1}^{\gamma-\alpha}=\tfrac12 c_02^{\,n+1}=c_02^{\,n}.
\end{equation}
Let $n\ge0$ and $R_n\le|x|\le R_{n+1}$. Using \eqref{eq_compconc}, \eqref{eq_identity} and \eqref{eq_An},
\[
|u(x)|\le|x|^{-\gamma}\left(A_n|x|^{\gamma-\alpha}+B_n\right)
\le|x|^{-\gamma}\left(A_nR_{n+1}^{\gamma-\alpha}+B_n\right)=2c_0\,2^{\,n}|x|^{-\gamma} .
\]

\smallskip
\emph{Step 3: bounding $2^n$ by a power of $\log|x|$.} Put $e_n=-\log\varepsilon_n$. From $\varepsilon_{n+1}=2K^{-1/(p-1)}\varepsilon_n^\kappa$ we get $e_{n+1}=\kappa e_n-c_1$, hence $e_{n+1}-\bar c=\kappa(e_n-\bar c)$ and
\[
e_n=\bar c+\kappa^{\,n}(e_0-\bar c)\ge \bar c+\kappa^{\,n},
\]
by the second condition in \eqref{eq_eps0}. Combining with \eqref{eq_identity} and $\log R_0\ge0$, $n\log2\ge0$,
\[
(\gamma-\alpha)\log R_n\ge e_n-e_0\ge\kappa^{\,n}+\bar c-e_0 ,
\]
so that $\kappa^{\,n}\le(\gamma-\alpha)\log R_n+e_0-\bar c$. Set $R_*=\max\{R_1,e\}$ and let $|x|\ge R_*$, so that $|x|\in[R_n,R_{n+1}]$ for some $n\ge1$ and $\log|x|\ge\log R_n\ge1$. Then
\[
\kappa^{\,n}\le(\gamma-\alpha)\log R_n+(e_0-\bar c)\le C_1\log|x| ,
\]
where we used $\log R_n\le\log|x|$, $\log|x|\ge1$ and $e_0-\bar c\ge1$, and where $C_1=\gamma-\alpha+e_0-\bar c$. Whence $n\le\frac{\log(C_1\log|x|)}{\log\kappa}$ and
\[
2^{\,n}\le\exp\left(\frac{\log 2}{\log\kappa}\log\big(C_1\log|x|\big)\right)=\big(C_1\log|x|\big)^{\theta} .
\]
Together with Step 2 this gives \eqref{eq_pharmonic} with $C=2c_0C_1^{\theta}$.
\end{proof}

\begin{corollary}\label{cor_gradboot}
Under the hypotheses of Proposition \ref{prop_boot} there are $C>0$ and $R_*>1$ with
\begin{equation}\label{eq_gradboot}
|\nabla u(x)|\le C|x|^{-\gamma-1}\left(\log|x|\right)^{\theta}\qquad\text{for } |x|\ge R_* .
\end{equation}
\end{corollary}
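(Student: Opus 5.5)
The plan is to repeat the scaling argument used for the gradient in the proof of Proposition \ref{prop_decay}, now feeding in the improved bound \eqref{eq_pharmonic} in place of the self-similar one. Fix $x_0$ with $|x_0|$ large, set $r=|x_0|$, and consider $v(y)=u(x_0+\tfrac r4 y)$ on $B_2$. This function solves $-\Delta_p v=(r/4)^p|v|^{q-1}v$ weakly in $B_2$. The ball $x_0+\tfrac r2 B_1$ lies in the annulus $\{r/2\le|x|\le 3r/2\}$. So if $r/2\ge R_*$, Proposition \ref{prop_boot} gives
\[
M:=\|v\|_{L^\infty(B_2)}\le C r^{-\gamma}(\log r)^{\theta}.
\]
Here we use $(r/2)^{-\gamma}\le 2^\gamma r^{-\gamma}$ and $\log(3r/2)\le 2\log r$ for $r$ large.

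Lemma \ref{lem_grad} then gives
\[
\frac r4|\nabla u(x_0)|\le C\Big(M+\big((r/4)^pM^q\big)^{\frac1{p-1}}\Big).
\]
The second term is handled exactly as before. Write $(r^pM^q)^{1/(p-1)}=(r^pM^{q-p+1})^{1/(p-1)}M$. Since $M\le Cr^{-\gamma}(\log r)^\theta$ and $\gamma>\alpha$,
\[
r^pM^{q-p+1}\le C\big(r^{\alpha}M\big)^{q-p+1}\le C\big(r^{\alpha-\gamma}(\log r)^{\theta}\big)^{q-p+1}\to0 .
\]
Alternatively one can simply invoke the hypothesis \eqref{eq_smalldecay}, which already gives $r^\alpha M$ bounded. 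In either case the second term is at most $CM$ for $r$ large.

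It follows that $|\nabla u(x_0)|\le C M/r\le C r^{-\gamma-1}(\log r)^\theta$. This is \eqref{eq_gradboot}, after enlarging $R_*$ so that all the "$r$ large" conditions hold.

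There is no real obstacle here; the only care needed is bookkeeping. One must check that the ball used in Lemma \ref{lem_grad} stays in the region where \eqref{eq_pharmonic} holds, and that the constant in Lemma \ref{lem_grad} is scale invariant (it depends only on $N,p$). Both are immediate from the choice of the radius $r/4$ and the statement of Lemma \ref{lem_grad}.
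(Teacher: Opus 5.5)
Your proposal is correct and is essentially the paper's proof: it uses the same rescaling $v(y)=u(x_0+\frac r4 y)$ on $B_2$, the same bound of $\|v\|_{L^\infty(B_2)}$ by \eqref{eq_pharmonic} on an annulus around $|x|=r$, the same application of Lemma \ref{lem_grad}, and the same absorption of the nonlinear term via $r^pM^{q-p+1}=(r^\alpha M)^{q-p+1}$, which is small because $\gamma>\alpha$. Your side remark is also right: boundedness of $r^\alpha M$, which already follows from \eqref{eq_smalldecay}, suffices for that absorption.
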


\begin{proof}
Let $r\ge 2R_*$ and put $M_r=\sup_{\frac12 r\le|y|\le 2r}|u(y)|$, so that $M_r\le Cr^{-\gamma}(\log r)^{\theta}$ by \eqref{eq_pharmonic}. Let $|x_0|=r$ and apply Lemma \ref{lem_grad} to $v(y)=u(x_0+\frac r4y)$ on $B_2$, which solves $-\Delta_p v=(r/4)^p|v|^{q-1}v$ weakly with $\|v\|_{L^\infty(B_2)}\le M_r$. As in the proof of Proposition \ref{prop_decay},
\[
\frac r4|\nabla u(x_0)|\le C\left(M_r+\big((r/4)^pM_r^{q}\big)^{\frac1{p-1}}\right)
=C\,M_r\left(1+\big((r/4)^pM_r^{q-p+1}\big)^{\frac1{p-1}}\right).
\]
Since $\gamma>\alpha$ we have $r^{\alpha}M_r\le Cr^{\alpha-\gamma}(\log r)^{\theta}\to0$, hence $r^pM_r^{q-p+1}=(r^{\alpha}M_r)^{q-p+1}\le1$ for $r$ large. Therefore $|\nabla u(x_0)|\le CM_r/r$, which is \eqref{eq_gradboot}.
\end{proof}

\begin{remark}
The logarithmic factor in \eqref{eq_pharmonic} is an artifact of the iteration, and its exponent can be made as small as one wishes. The factor $2$ in $\varepsilon_{n+1}=2A_n$ is the price paid for passing from the two term bound \eqref{eq_compconc} to a single term at the radius $R_{n+1}$. Fix $\tau>0$ and choose $R_{n+1}$ instead so that at that radius the second term of \eqref{eq_compconc} is $\tau$ times the first, that is
\[
\varepsilon_{n+1}=(1+\tau)A_n,\qquad
R_{n+1}=\Big(\frac{\varepsilon_n}{\tau A_n}\Big)^{\frac1{\gamma-\alpha}}R_n .
\]
The induction of Step 1 goes through verbatim once $\varepsilon_0$ is small enough, depending on $\tau$, and the bookkeeping of Step 2 now gives
\[
\varepsilon_nR_n^{\gamma-\alpha}=c_0\lambda^{\,n},\qquad \lambda=\frac{1+\tau}{\tau},
\]
in place of \eqref{eq_identity}, whence $|u(x)|\le c_0\lambda^{\,n+1}|x|^{-\gamma}$ when $R_n\le|x|\le R_{n+1}$ and, by Step 3 with $\log\lambda$ in place of $\log 2$,
\[
|u(x)|\le C_\tau\,|x|^{-\gamma}\big(\log|x|\big)^{\theta_\tau},
\qquad \theta_\tau=\frac{\log\frac{1+\tau}{\tau}}{\log\frac{q}{p-1}} .
\]
Since $\theta_\tau\to0$ as $\tau\to\infty$, the exponent $\theta$ in \eqref{eq_pharmonic} may be replaced by any positive number, at the cost of a larger constant. It cannot be replaced by $0$ this way, because $C_\tau$ grows with $\tau$, and we do not know whether $u(x)=O(|x|^{-\gamma})$ holds. The question is of no consequence for Theorem \ref{thm_main}, since all the inequalities of Lemma \ref{lem_energy} are strict.
\end{remark}

\section{Finite energy, the Pohozaev identity and the proof of Theorems \ref{thm_main} and \ref{thm_decay}}\label{sec_pohozaev}

\begin{lemma}\label{lem_energy}
Let $N>p\ge2$ and $\frac{N(p-1)}{N-p}<q$, and let $u\in C^1(\mathbb{R}^N)$ satisfy \eqref{eq_pharmonic} and \eqref{eq_gradboot}, with $\theta$ as in Proposition \ref{prop_boot}. Then
\[
\int_{\mathbb{R}^N}|\nabla u|^p\,dx<+\infty,\qquad
\int_{\mathbb{R}^N}|u|^{q+1}\,dx<+\infty,\qquad
\int_{\mathbb{R}^N}|u|^{p^*}\,dx<+\infty .
\]
\end{lemma}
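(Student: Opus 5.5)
The plan is to split each integral into the part over a large ball $B_{R_*}$ and the part over its complement. On $B_{R_*}$ all three integrands are continuous because $u\in C^1(\mathbb R^N)$, so they are bounded there and the integrals over $B_{R_*}$ are finite. Outside $B_{R_*}$ we insert the pointwise bounds \eqref{eq_pharmonic} and \eqref{eq_gradboot} and integrate in polar coordinates. Each tail then reduces to a radial integral of the form
\[
\int_{R_*}^\infty r^{N-1-\beta}(\log r)^{c}\,dr .
\]
This integral converges for every fixed $c\ge 0$ as soon as $\beta>N$. The logarithm is harmless because the power inequalities below are strict.

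For the gradient, the bound \eqref{eq_gradboot} gives $|\nabla u|^p\le C|x|^{-p(\gamma+1)}(\log|x|)^{p\theta}$, so we need $p(\gamma+1)>N$. Since $\gamma+1=\frac{N-1}{p-1}$, this reads $p(N-1)>N(p-1)$, which is equivalent to $N>p$ and therefore holds.

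For the critical integrand, \eqref{eq_pharmonic} gives $|u|^{p^*}\le C|x|^{-p^*\gamma}(\log|x|)^{p^*\theta}$. The required inequality $p^*\gamma>N$ reads $\frac{Np}{N-p}\cdot\frac{N-p}{p-1}=\frac{Np}{p-1}>N$, which holds because $p>1$.

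For $|u|^{q+1}$ we need $(q+1)\gamma>N$, that is $q+1>\frac{N(p-1)}{N-p}$. This follows from the hypothesis $q>\frac{N(p-1)}{N-p}$, since then $q+1$ is larger still.

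The whole argument is routine. The only point that needs care is checking that all three exponent inequalities are strict, since strictness is what absorbs the factor $(\log|x|)^{\theta}$. No step is a real obstacle.
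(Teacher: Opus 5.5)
Your proof is correct and follows the paper's argument: on $B_{R_*}$ the integrands are continuous, and on the exterior you insert \eqref{eq_pharmonic} and \eqref{eq_gradboot} and use the strict inequalities $p(\gamma+1)>N$, $\gamma(q+1)>N$ and $\gamma p^*>N$ to absorb the logarithmic factors. The only difference is cosmetic: you check the gradient exponent through $\gamma+1=\frac{N-1}{p-1}$, whereas the paper writes $N-(\gamma+1)p=-\frac{N-p}{p-1}<0$.
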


\begin{proof}
Since $(p-1)\gamma=N-p$ we have
\[
N-(\gamma+1)p=N-p-\gamma p=(N-p)\left(1-\frac{p}{p-1}\right)=-\frac{N-p}{p-1}<0 .
\]
Also $q+1>\frac{N(p-1)}{N-p}=\frac N\gamma$, hence $\gamma(q+1)>N$, that is $N-\gamma(q+1)<0$. Write $\mu_1=(\gamma+1)p-N>0$ and $\mu_2=\gamma(q+1)-N>0$. By \eqref{eq_pharmonic} and \eqref{eq_gradboot},
\[
\int_{|x|\ge R_*}|\nabla u|^p\,dx\le C\int_{R_*}^{\infty}r^{-(\gamma+1)p}(\log r)^{\theta p}r^{N-1}\,dr
=C\int_{R_*}^{\infty}r^{-1-\mu_1}(\log r)^{\theta p}\,dr<+\infty,
\]
and similarly $\int_{|x|\ge R_*}|u|^{q+1}\,dx\le C\int_{R_*}^{\infty}r^{-1-\mu_2}(\log r)^{\theta(q+1)}\,dr<+\infty$. The same computation applies to $|u|^{p^*}$, because $\gamma p^*=\frac{Np}{p-1}>N$. On the ball $B_{R_*}$ all three integrands are continuous, hence integrable.
\end{proof}

The Pohozaev identity for the $p$-Laplacian, in the global form that finite energy makes available, is due to Damascelli, Farina, Sciunzi and Valdinoci, who give a self-contained proof for $C^{1}$ weak solutions.

\begin{lemma}[{\cite[Cor.~8.3 and Rem.~8.4]{DFSV2009}}]\label{lem_poho}
Let $u\in C^1(\mathbb{R}^N)$ be a weak solution of \eqref{eq_main} with $|\nabla u|\in L^p(\mathbb{R}^N)$ and $u\in L^{q+1}(\mathbb{R}^N)$. Then
\begin{equation}
\frac{N}{q+1}\int_{\mathbb{R}^N}|u|^{q+1}\,dx=\frac{N-p}{p}\int_{\mathbb{R}^N}|\nabla u|^p\,dx .
\end{equation}
\end{lemma}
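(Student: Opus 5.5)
The Pohozaev identity will follow from the classical Rellich--Pohozaev argument: test the equation with a truncated dilation field and pass to the limit using finite energy. The obstruction for $p>2$ is that $u$ is only $C^{1,\beta}$, so $x\cdot\nabla u$ is not a priori a legitimate test function with second derivatives available. I would therefore work with a regularized problem.

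\textbf{Regularization and a local identity.} Fix a cutoff $\psi_R(x)=\psi(x/R)$ with $\psi\in C_c^\infty(B_2)$, $\psi\equiv1$ on $B_1$. On a ball $B_{3R}$, approximate $u$ by solutions $u_\varepsilon$ of $-\operatorname{div}\big((\varepsilon+|\nabla u_\varepsilon|^2)^{\frac{p-2}{2}}\nabla u_\varepsilon\big)=|u|^{q-1}u$ with boundary datum $u$. These are smooth in the interior, and $u_\varepsilon\to u$ in $C^1_{\mathrm{loc}}$ by uniform $C^{1,\beta}$ estimates. Alternatively, one can use the $W^{2,2}_{\mathrm{loc}}$ regularity of $|\nabla u|^{\frac{p-2}{2}}\nabla u$, which is known for $p\ge2$. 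For the smooth approximants, multiply by $\psi_R\,x\cdot\nabla u_\varepsilon$ and integrate by parts. The standard identity
\[
\operatorname{div}\big(a(\nabla v)(x\cdot\nabla v)-x\,\Phi(\nabla v)\big)=(x\cdot\nabla v)\operatorname{div}a(\nabla v)+a\cdot\nabla v-N\Phi,
\]
with $\Phi$ the integrand and $a=\nabla\Phi$, produces the term $(x\cdot\nabla u_\varepsilon)|u|^{q-1}u$ on the right. Letting $\varepsilon\to0$, and noting that all terms involve at most first derivatives times $\nabla\psi_R$ or $\psi_R$, gives
\[
\int\psi_R\Big(\tfrac{N-p}{p}|\nabla u|^p-\tfrac{N}{q+1}|u|^{q+1}\Big)=\int\Big(|\nabla u|^{p-2}(\nabla u\cdot\nabla\psi_R)(x\cdot\nabla u)-\tfrac1p|\nabla u|^p\,x\cdot\nabla\psi_R+\tfrac1{q+1}|u|^{q+1}x\cdot\nabla\psi_R\Big).
\]
Here I rewrote $(x\cdot\nabla u)|u|^{q-1}u=x\cdot\nabla\frac{|u|^{q+1}}{q+1}$ and integrated by parts once more.

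\textbf{Passing $R\to\infty$.} Since $|x||\nabla\psi_R|\le C$ is supported in $B_{2R}\setminus B_R$, the right-hand side is bounded by $C\int_{B_{2R}\setminus B_R}(|\nabla u|^p+|u|^{q+1})$. This tends to $0$ by the integrability hypotheses and dominated convergence, and the left-hand side converges to the claimed expression. This yields the identity.

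\textbf{Main obstacle.} The main obstacle is the justification of the local identity at the degenerate set $\{\nabla u=0\}$ when $p>2$, that is, the limit in the regularized problem. I would first try the approximation route above, since it needs only $C^1_{\mathrm{loc}}$ convergence of $u_\varepsilon$, which follows from DiBenedetto--Tolksdorf estimates uniform in $\varepsilon$ with the right-hand side fixed and bounded. Convergence of the boundary terms is not required, because the cutoff $\psi_R$ is supported strictly inside the ball.
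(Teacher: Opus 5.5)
Your proof is correct. It supplies an argument where the paper gives none: the paper imports the identity from \cite[Cor.~8.3 and Rem.~8.4]{DFSV2009}, noting only that a self-contained proof for $C^1$ weak solutions is given there. Your outer structure is the standard one: a Rellich--Pohozaev identity localized by $\psi_R$, then $R\to\infty$ with the remainder bounded by $C\int_{B_{2R}\setminus B_R}(|\nabla u|^p+|u|^{q+1})$. Your displayed local identity is correct with the signs as written. What makes the justification at $\{\nabla u=0\}$ work is that you freeze the right-hand side at $f=|u|^{q-1}u$. After integrating the divergence identity against $\psi_R$, the approximate identity involves only $\nabla u_\varepsilon$ on $\operatorname{supp}\psi_R\Subset B_{3R}$. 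So uniform convergence of $\nabla u_\varepsilon$ on $\overline{B_{2R}}$ is enough, and $f\,x\cdot\nabla u=x\cdot\nabla\frac{|u|^{q+1}}{q+1}$ is integrated by parts only after $\varepsilon\to0$. The citation keeps the paper short. Your route makes explicit that the only inputs are interior $C^{1,\beta}$ estimates uniform in $\varepsilon$ and uniqueness for the frozen Dirichlet problem, with finite energy used only to kill the annular terms.

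Three points should be tightened.

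First, the convergence $u_\varepsilon\to u$ does not follow from the $C^{1,\beta}$ estimates alone. You also need bounds on $u_\varepsilon$ in $W^{1,p}(B_{3R})$ and in $L^\infty$, uniform in $\varepsilon$; these come from minimality against $u$ and a comparison argument. Then you must identify the limit: any subsequential limit $\tilde u\in u+W^{1,p}_0(B_{3R})$ solves $-\Delta_p\tilde u=f$, and strict monotonicity of $\xi\mapsto|\xi|^{p-2}\xi$ forces $\tilde u=u$, so the whole family converges.

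Second, $u_\varepsilon$ need not be smooth, since $f$ has limited regularity. It is $C^{2,\beta}_{\mathrm{loc}}$ by Schauder theory, which is all the divergence identity requires.

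Third, in your alternative route the known second-order regularity is $|\nabla u|^{\frac{p-2}{2}}\nabla u\in W^{1,2}_{\mathrm{loc}}$, not $W^{2,2}_{\mathrm{loc}}$. Using it would also require a chain-rule argument for $x\cdot\nabla u$ near the critical set, which is exactly what your regularization avoids.
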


\begin{proof}[Proof of Theorem \ref{thm_decay}]
Let $\varepsilon_*$ be the constant of Proposition \ref{prop_boot}. That proposition and Corollary \ref{cor_gradboot} give \eqref{eq_pharmonic} and \eqref{eq_gradboot}, and Lemma \ref{lem_energy} then gives
\[
u\in L^{q+1}(\mathbb{R}^N),\qquad |\nabla u|\in L^{p}(\mathbb{R}^N).
\]

Let $\eta_R\in C_c^\infty(B_{2R})$ with $0\le\eta_R\le1$, $\eta_R\equiv1$ on $B_R$ and $|\nabla\eta_R|\le C/R$. Testing \eqref{eq_main} with $u\eta_R$ gives
\[
\int_{\mathbb{R}^N}|\nabla u|^p\eta_R\,dx+\int_{\mathbb{R}^N}u\,|\nabla u|^{p-2}\nabla u\cdot\nabla\eta_R\,dx
=\int_{\mathbb{R}^N}|u|^{q+1}\eta_R\,dx .
\]
By \eqref{eq_pharmonic} and \eqref{eq_gradboot}, the middle term is bounded in absolute value by
\[
\frac{C}{R}\int_{B_{2R}\setminus B_R}|u|\,|\nabla u|^{p-1}\,dx
\le \frac{C}{R}\,R^{N}\,R^{-\gamma}R^{-(\gamma+1)(p-1)}(\log R)^{\theta p}
=C\,R^{\,N-p-\gamma p}(\log R)^{\theta p},
\]
and $N-p-\gamma p=-\frac{N-p}{p-1}<0$, so this tends to $0$. Letting $R\to\infty$ and using dominated convergence on the other two terms,
\begin{equation}\label{eq_id1}
\int_{\mathbb{R}^N}|\nabla u|^p\,dx=\int_{\mathbb{R}^N}|u|^{q+1}\,dx .
\end{equation}
The first and third bounds of Lemma \ref{lem_energy} give $u\in\mathcal D^{1,p}(\mathbb R^N)$, so \eqref{eq_id1} is the energy identity of the statement.

On the other hand, Lemma \ref{lem_poho} applies and gives
\begin{equation}\label{eq_id2}
\frac{N-p}{p}\int_{\mathbb{R}^N}|\nabla u|^p\,dx=\frac{N}{q+1}\int_{\mathbb{R}^N}|u|^{q+1}\,dx .
\end{equation}
Combining \eqref{eq_id1} and \eqref{eq_id2},
\[
\left(\frac{N-p}{p}-\frac{N}{q+1}\right)\int_{\mathbb{R}^N}|u|^{q+1}\,dx=0 .
\]
The factor $\frac{N-p}{p}-\frac{N}{q+1}$ vanishes only when $q+1=\frac{Np}{N-p}=p^*$. Hence, if $q\ne p^*-1$, then $\int_{\mathbb{R}^N}|u|^{q+1}=0$ and $u\equiv0$.
\end{proof}

\begin{proof}[Proof of Theorem \ref{thm_main}]
Let $u$ be as in the statement, with $p^*-1<q<q_c(N,p)$. Proposition \ref{prop_decay} gives $|x|^\alpha|u(x)|\to0$, so the hypothesis of Theorem \ref{thm_decay} holds for any $\varepsilon_*>0$, and $q>p^*-1>\frac{N(p-1)}{N-p}$. Since $q\ne p^*-1$, Theorem \ref{thm_decay} gives $u\equiv0$.
\end{proof}

\begin{proof}[Proof of Corollary \ref{cor_full}]
For $p=2$ a weak solution $u\in C^1(\mathbb{R}^N)$ is of class $C^2$ by Schauder theory, since $t\mapsto|t|^{q-1}t$ is $C^1$ for $q>1$, so the whole statement is that of Farina \cite{Farina2007}. For $p>2$ and $p-1<q<p^*-1$ it is the Liouville theorem of \cite{DFSV2009} for solutions stable outside a compact set. For $p\ge2$ and $p^*-1<q<q_c(N,p)$ it is Theorem \ref{thm_main}. The statement about finite Morse index solutions follows from Remark \ref{rem_fmi}.
\end{proof}

\section{The singular solution and the exponent \texorpdfstring{$q_c(N,p)$}{qc(N,p)}}\label{sec_sing}

Throughout this section $N>p\ge2$ and $q>p^*-1$, and $\alpha$, $\gamma$, $c_s$ and $u_{x_0}$ are as in \eqref{eq_exponents} and \eqref{eq_cs}. We write $u_s=u_0$ and
\begin{equation}\label{eq_beta}
\beta=(\alpha+1)(p-2),\qquad \partial_r\varphi=\frac{x}{|x|}\cdot\nabla\varphi .
\end{equation}
Two elementary inequalities will be used repeatedly. First, $q>p^*-1$ is equivalent to
\begin{equation}\label{eq_alphasmall}
\alpha<\frac{N-p}{p},
\end{equation}
because $\alpha<\frac{N-p}{p}$ means $q-p+1>\frac{p^2}{N-p}$, that is $q>\frac{(p-1)N+p}{N-p}=p^*-1$. Since $\frac{N-p}{p}<\frac{N-p}{p-1}=\gamma$, we have $0<\alpha<\gamma$, so $c_s$ is well defined and positive. Second,
\begin{equation}\label{eq_Nbeta}
N-2-\beta=N-p-\alpha(p-2)>0,
\end{equation}
because $N-2-(\alpha+1)(p-2)=N-2-(p-2)-\alpha(p-2)=N-p-\alpha(p-2)$ and, by \eqref{eq_alphasmall}, $\alpha(p-2)<\frac{(N-p)(p-2)}{p}<N-p$.

\subsection{A weighted Hardy inequality and an algebraic identity}

\begin{lemma}\label{lem_hardy}
Let $N\ge2$, $\sigma\ge0$ and $\beta\in\mathbb{R}$ with $N-2-\beta\ne0$. Then
\begin{equation}\label{eq_hardy}
\int_{\mathbb{R}^N}|x|^{-\beta}\Big[|\nabla\varphi|^2+\sigma(\partial_r\varphi)^2\Big]dx
\ \ge\ (1+\sigma)\left(\frac{N-2-\beta}{2}\right)^2\int_{\mathbb{R}^N}|x|^{-\beta-2}\varphi^2\,dx
\end{equation}
for every $\varphi\in C_c^1(\mathbb{R}^N\setminus\{0\})$.
\end{lemma}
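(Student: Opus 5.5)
The inequality splits into a tangential part and a radial part. Since $|\nabla\varphi|^2\ge(\partial_r\varphi)^2$ pointwise, we have
\[
|\nabla\varphi|^2+\sigma(\partial_r\varphi)^2\ \ge\ |\nabla\varphi|^2\quad\text{and}\quad\ge\ (1+\sigma)(\partial_r\varphi)^2 .
\]
The second bound would already give \eqref{eq_hardy} if we had the one-dimensional weighted Hardy inequality
\begin{equation*}
\int_{\mathbb{R}^N}|x|^{-\beta}(\partial_r\varphi)^2\,dx\ \ge\ \left(\frac{N-2-\beta}{2}\right)^2\int_{\mathbb{R}^N}|x|^{-\beta-2}\varphi^2\,dx
\qquad\text{for }\varphi\in C_c^1(\mathbb{R}^N\setminus\{0\}).
\end{equation*}
So the plan is to prove this radial inequality and then multiply by $1+\sigma$. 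The tangential part of the gradient is simply discarded.

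To prove the radial inequality, I would pass to polar coordinates $x=r\omega$. For each fixed $\omega\in S^{N-1}$, it then suffices to show
\[
\int_0^\infty r^{N-1-\beta}\,f'(r)^2\,dr\ \ge\ \left(\frac{N-2-\beta}{2}\right)^2\int_0^\infty r^{N-3-\beta}f(r)^2\,dr
\]
for every $f\in C_c^1(0,\infty)$, and then integrate over $\omega$. Write $a=N-2-\beta\neq0$. The argument is the standard integration by parts:
\[
\int_0^\infty r^{a-1}f^2\,dr=\frac1a\int_0^\infty (r^{a})'f^2\,dr=-\frac2a\int_0^\infty r^{a}ff'\,dr .
\]
There are no boundary terms because $f$ has compact support in $(0,\infty)$; this is exactly where the hypothesis $\varphi\in C_c^1(\mathbb{R}^N\setminus\{0\})$ is used, and it is why no sign condition on $a$ is needed, only $a\ne0$.

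Next, Cauchy--Schwarz with the splitting $r^{a}ff'=\big(r^{(a-1)/2}f\big)\big(r^{(a+1)/2}f'\big)$ gives
\[
\int r^{a-1}f^2\le\frac{2}{|a|}\Big(\int r^{a-1}f^2\Big)^{1/2}\Big(\int r^{a+1}f'^2\Big)^{1/2}.
\]
Dividing by the square root, which is harmless when the integral vanishes, and squaring yields $\int r^{a+1}f'^2\ge\frac{a^2}{4}\int r^{a-1}f^2$. Since $a+1=N-1-\beta$ and $a-1=N-3-\beta$, this is exactly the one-dimensional inequality above.

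Finally, using $dx=r^{N-1}dr\,d\omega$, integrating in $\omega$ gives the radial inequality, and combining it with $(1+\sigma)(\partial_r\varphi)^2\le|\nabla\varphi|^2+\sigma(\partial_r\varphi)^2$ gives \eqref{eq_hardy}. There is no real obstacle here. The only points to check are that the weights $|x|^{-\beta}$ are smooth on the support of $\varphi$, so every integral is finite, and that the boundary terms vanish.
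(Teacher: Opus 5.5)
Your proposal is correct and follows the paper's proof essentially verbatim: you discard the angular part of the gradient to reduce to $(1+\sigma)(\partial_r\varphi)^2$, then prove the one-dimensional weighted Hardy inequality along each ray by integrating $f^2(r^{a})'$ by parts and applying Cauchy--Schwarz. Your $a=N-2-\beta$ is exactly the paper's $\nu-1$, and integrating over $\mathbb{S}^{N-1}$ gives \eqref{eq_hardy}.
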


\begin{proof}
In polar coordinates $|\nabla\varphi|^2=(\partial_r\varphi)^2+r^{-2}|\nabla_\theta\varphi|^2$, where $\nabla_\theta$ is the gradient on $\mathbb{S}^{N-1}$. Discarding the angular term,
\[
\int_{\mathbb{R}^N}|x|^{-\beta}\Big[|\nabla\varphi|^2+\sigma(\partial_r\varphi)^2\Big]dx
\ \ge\ (1+\sigma)\int_{\mathbb{S}^{N-1}}\int_0^{\infty}\big(\partial_r\varphi\big)^2 r^{\,\nu}\,dr\,d\theta,
\qquad \nu=N-1-\beta .
\]
Fix $\theta$ and write $f(r)=\varphi(r\theta)$, a function in $C_c^1(0,\infty)$. Since $\nu\ne1$,
\begin{align*}
\int_0^\infty f^2r^{\nu-2}\,dr
&=\frac{1}{\nu-1}\int_0^\infty f^2\big(r^{\nu-1}\big)'\,dr \\
&=-\frac{2}{\nu-1}\int_0^\infty ff'r^{\nu-1}\,dr \\
&\le\frac{2}{|\nu-1|}
\left(\int_0^\infty f^2r^{\nu-2}\,dr\right)^{1/2}
\left(\int_0^\infty f'^2r^{\nu}\,dr\right)^{1/2},
\end{align*}
so that $\int_0^\infty f'^2r^\nu\,dr\ge\big(\frac{\nu-1}{2}\big)^2\int_0^\infty f^2r^{\nu-2}dr$. Since $\frac{\nu-1}{2}=\frac{N-2-\beta}{2}$, integrating over $\mathbb{S}^{N-1}$ gives \eqref{eq_hardy}.
\end{proof}

\begin{lemma}\label{lem_algebra}
Let $N>p\ge2$ and $q>p^*-1$, and set $t=q-p+1$ and $M=N-p$. Then
\begin{equation}\label{eq_Hdef}
4(p-1)t^2\left[\left(\frac{N-2-\beta}{2}\right)^2-q\,\alpha(\gamma-\alpha)\right]=H(t),
\end{equation}
where
\begin{equation}\label{eq_H}
H(t)=M\big[(p-1)M-4p\big]\,t^2+2p^2(p-1)(2-M)\,t+p^4(p-1) .
\end{equation}
One has $H\big(\frac{p^2}{M}\big)=-\frac{4p^4}{M}<0$, and
\[
H(t)\ge0\iff (p-1)N>p(p+3)\ \text{ and }\ q\ge q_c(N,p).
\]
\end{lemma}

\begin{proof}
By \eqref{eq_Nbeta}, $N-2-\beta=M-\alpha(p-2)$, and $\alpha=p/t$, $\gamma=M/(p-1)$, $q=t+p-1$. Hence
\begin{align*}
4t^2\left(\frac{N-2-\beta}{2}\right)^2
&=\big(Mt-p(p-2)\big)^2,\\
4t^2q\alpha(\gamma-\alpha)
&=4t^2(t+p-1)\frac pt\left(\frac{M}{p-1}-\frac pt\right)\\
&=\frac{4p}{p-1}(t+p-1)\big(Mt-p(p-1)\big).
\end{align*}
Multiplying the difference by $p-1$ and expanding,
\[
(p-1)\big(Mt-p(p-2)\big)^2-4p(t+p-1)\big(Mt-p(p-1)\big)
=\big[(p-1)M^2-4pM\big]t^2+c_1t+c_0 ,
\]
with
\begin{align*}
c_1
&=-2(p-1)Mp(p-2)-4p(p-1)M+4p^2(p-1)\\
&=-2p^2(p-1)M+4p^2(p-1)
 =2p^2(p-1)(2-M).
\end{align*}
and $c_0=(p-1)p^2(p-2)^2+4p^2(p-1)^2=p^2(p-1)\big[(p-2)^2+4(p-1)\big]=p^4(p-1)$. This is \eqref{eq_Hdef} with $H$ as in \eqref{eq_H}.

Next, with $t=p^2/M$,
\begin{align*}
\frac{M^2}{p^4}H\!\left(\frac{p^2}{M}\right)
&=M\big[(p-1)M-4p\big]+2(p-1)(2-M)M+(p-1)M^2\\
&=\big[(p-1)-2(p-1)+(p-1)\big]M^2-4pM+4(p-1)M\\
&=-4M.
\end{align*}
so $H(p^2/M)=-4p^4/M<0$. By \eqref{eq_alphasmall} the range $q>p^*-1$ is exactly $t>p^2/M$.

Write $D=M[(p-1)M-4p]=(N-p)\big[(p-1)N-p(p+3)\big]$, using $(p-1)M-4p=(p-1)N-p(p-1)-4p=(p-1)N-p(p+3)$. If $D\le0$ then $H$ is concave, $H(0)=p^4(p-1)>0$ and $H(p^2/M)<0$, so $H$ has a zero in $(0,p^2/M)$ and, being concave with $H(0)>0$, is negative on $[p^2/M,\infty)$. If $D>0$ then $H$ is a convex parabola and $H(p^2/M)<0$, so $p^2/M$ lies strictly between its two zeros and $H(t)\ge0$ on $(p^2/M,\infty)$ if and only if $t\ge t_+$, the larger zero. The discriminant of $H$ divided by $4$ equals
\begin{align*}
p^4(p-1)^2(M-2)^2-Dp^4(p-1)
&=p^4(p-1)\Big[(p-1)(M-2)^2-M\big((p-1)M-4p\big)\Big]\\
&=4p^4(p-1)\big(M+p-1\big)\\
&=4p^4(p-1)(N-1).
\end{align*}
where we used $(p-1)(M^2-4M+4)-(p-1)M^2+4pM=4M+4(p-1)$ and $M+p-1=N-1$. Hence
\[
t_+=\frac{p^2\Big[(p-1)(M-2)+2\sqrt{(p-1)(N-1)}\Big]}{D}.
\]
It remains to check that $p-1+t_+$ is the number $q_c(N,p)$ of \eqref{eq_qc}. Put $B=(p-1)N-p$, so that $(p-1)M=B+p(2-p)$ and $(p-1)M-4p=B-p^2-2p$. Then
\begin{align*}
(p-1)D+p^2(p-1)(M-2)
&=\big(B+p(2-p)\big)\big(B-p^2-2p\big)\\
&\quad +p^2\big(B+p(2-p)\big)-2p^2(p-1)\\
&=\big(B+2p-p^2\big)\big(B-2p\big)-2p^2(p-1).
\end{align*}
which expands to $B^2-p^2B-4p^2+2p^3-2p^3+2p^2=B^2-p^2B-2p^2$. Therefore
\begin{align*}
p-1+t_+
&=\frac{(p-1)D+p^2(p-1)(M-2)+2p^2\sqrt{(p-1)(N-1)}}{D}\\
&=\frac{B^2-p^2B-2p^2+2p^2\sqrt{(p-1)(N-1)}}
{(N-p)\big[(p-1)N-p(p+3)\big]}.
\end{align*}
and substituting $-p^2B-2p^2=-p^2\big((p-1)N-p\big)-2p^2=-p^2(p-1)N+p^2(p-2)$ turns the numerator into the numerator of \eqref{eq_qc}. Since $q=p-1+t$, the equivalence follows.
\end{proof}

\subsection{Homogeneous profiles, and blow-down compactness at \texorpdfstring{$q=q_c$}{q=qc}}

\begin{proof}[Proof of Theorem \ref{thm_qc_hom}]
Since $U$ is homogeneous of degree $-\alpha$, write
\[
U(r\theta)=r^{-\alpha}\psi(\theta),\qquad r>0,\quad \theta\in\mathbb S^{N-1}.
\]
Set
\[
A(\theta)=\big(\alpha^2\psi^2+|\nabla_\theta\psi|^2\big)^{\frac{p-2}{2}},
\qquad m=(p-1)(\gamma-\alpha).
\]
The identities
\[
\nabla U=r^{-\alpha-1}\big(-\alpha\psi\,e_r+\nabla_\theta\psi\big),
\qquad
\alpha q=1+(\alpha+1)(p-1)
\]
show, by testing the weak equation with functions of the form $\eta(r)\zeta(\theta)$ and integrating the radial derivative by parts, that $\psi$ solves on $\mathbb S^{N-1}$
\begin{equation}\label{eq_sphere_qc}
-\operatorname{div}_\theta(A\nabla_\theta\psi)+\alpha m A\psi
=|\psi|^{q-1}\psi
\end{equation}
weakly. Testing \eqref{eq_sphere_qc} with $\psi$ gives
\begin{equation}\label{eq_sphere_energy}
I_1+\alpha m I_0=I_q,
\end{equation}
where
\[
I_1=\int_{\mathbb S^{N-1}}A|\nabla_\theta\psi|^2,\qquad
I_0=\int_{\mathbb S^{N-1}}A\psi^2,\qquad
I_q=\int_{\mathbb S^{N-1}}|\psi|^{q+1}.
\]
All these quantities are finite. Indeed, the homogeneity of $U$ and the assumptions on any fixed annulus imply
$\psi\in W^{1,p}(\mathbb S^{N-1})\cap L^{q+1}(\mathbb S^{N-1})$. In particular,
$A\in L^{p/(p-2)}(\mathbb S^{N-1})$ when $p>2$ (with the evident interpretation when $p=2$), and
$A\nabla_\theta\psi\in L^{p/(p-1)}(\mathbb S^{N-1})$. Smooth functions are dense in
$W^{1,p}(\mathbb S^{N-1})\cap L^{q+1}(\mathbb S^{N-1})$ for the sum norm. Hence one may first test with smooth approximations of $\psi$ and then pass to the limit by H\"older's inequality; this justifies both testing \eqref{eq_sphere_qc} with $\psi$ and the nonsmooth angular factor used in the stability test below.

We next note that the assumed stability outside $B_R$ upgrades, by homogeneity, to stability on the whole punctured space. Indeed, if $Q_U\ge0$ for test functions supported outside $B_R$ and $\varphi\in C_c^1(\mathbb R^N\setminus\{0\})$, then for $t$ sufficiently large the function $\varphi_t(x)=\varphi(x/t)$ is supported outside $B_R$. Since $\alpha(q-1)=2+(\alpha+1)(p-2)$, a change of variables gives
\[
Q_U(\varphi_t)=t^{N-2-(\alpha+1)(p-2)}Q_U(\varphi),
\]
so $Q_U(\varphi)\ge0$.

Put
\[
\beta=(\alpha+1)(p-2),\qquad
\mu=\frac{N-2-\beta}{2}.
\]
Choose $\eta\in C_c^1(\mathbb R)$ and use in the stability inequality the test function
\[
\varphi(r,\theta)=r^{-\mu}\eta(\log r)\psi(\theta).
\]
For $p\ge2$,
\[
|\nabla U|^{p-2}|\nabla\varphi|^2
 +(p-2)|\nabla U|^{p-4}(\nabla U\cdot\nabla\varphi)^2
\le (p-1)|\nabla U|^{p-2}|\nabla\varphi|^2.
\]
Using $\alpha(q-1)=\beta+2$ and $2\mu=N-2-\beta$, the radial powers cancel after the change of variables $s=\log r$, and stability yields
\[
q I_q\int_{\mathbb R}\eta^2\,ds
\le(p-1)\left[
I_0\int_{\mathbb R}(\eta'-\mu\eta)^2\,ds
+I_1\int_{\mathbb R}\eta^2\,ds
\right].
\]
Take $\eta_L\in C_c^1(\mathbb R)$ with $0\le\eta_L\le1$, $\eta_L\equiv1$ on $[1,L-1]$, $\operatorname{supp}\eta_L\subset[0,L]$ and $|\eta_L'|\le2$, so that $\int\eta_L^2\ge L-2$ and $\int\eta_L'^2\le8$. Since $\int\eta_L\eta_L'=0$, dividing by $\int\eta_L^2$ and letting $L\to\infty$ give
\begin{equation}\label{eq_sphere_stab}
q I_q\le(p-1)(\mu^2 I_0+I_1).
\end{equation}
Combining \eqref{eq_sphere_energy} and \eqref{eq_sphere_stab},
\begin{equation}\label{eq_sphere_cancel}
(q-p+1)I_1
\le\big[(p-1)\mu^2-q\alpha m\big]I_0.
\end{equation}
Since $m=(p-1)(\gamma-\alpha)$, the coefficient on the right of \eqref{eq_sphere_cancel} is
\[
(p-1)\mu^2-q\alpha m=(p-1)\big[\mu^2-q\alpha(\gamma-\alpha)\big]=\frac{H(t)}{4t^2},
\qquad t=q-p+1,
\]
by the identity \eqref{eq_Hdef} of Lemma \ref{lem_algebra}. That lemma also tells us the sign of $H(t)$ in the range $q>p^*-1$: it is negative for $q<q_c(N,p)$ and zero at $q=q_c(N,p)$.

Suppose first $q<q_c(N,p)$. Then the right hand side of \eqref{eq_sphere_cancel} is a negative multiple of $I_0$, while the left hand side is nonnegative. Hence $I_0=0$. Since $A\psi^2>0$ at every $\theta$ with $\psi(\theta)\ne0$, this forces $\psi=0$ almost everywhere, that is $U\equiv0$. This is (i).

Suppose now $q=q_c(N,p)$, so that the coefficient on the right of \eqref{eq_sphere_cancel} vanishes. Because $q-p+1>0$, we obtain $I_1=0$. For $p=2$ we have $A\equiv1$ and this already gives $\nabla_\theta\psi=0$ almost everywhere. For $p>2$ the pointwise inequality
\[
A|\nabla_\theta\psi|^2=\big(\alpha^2\psi^2+|\nabla_\theta\psi|^2\big)^{\frac{p-2}{2}}|\nabla_\theta\psi|^2\ \ge\ |\nabla_\theta\psi|^{p}
\]
turns $I_1=0$ into $\int_{\mathbb S^{N-1}}|\nabla_\theta\psi|^p=0$, so again $\nabla_\theta\psi=0$ almost everywhere. In either case $\psi$ is constant. Equation \eqref{eq_sphere_qc} then shows that either $\psi=0$ or
\[
|\psi|^{q-p+1}=\alpha^{p-1}(p-1)(\gamma-\alpha)=c_s^{q-p+1}.
\]
Thus $U\equiv0$ or $U=\pm c_s|x|^{-\alpha}$, which is (ii).
\end{proof}

The endpoint estimates also give compactness of the rescaled family. This does not by itself imply homogeneity, but it isolates precisely what is missing from a complete classification at $q=q_c$.

\begin{proposition}[Blow-down compactness at $q_c$]\label{prop_qc_blowdown}
Assume $N>p\ge2$, $(p-1)N>p(p+3)$ and $q=q_c(N,p)$. Let $u\in C^1(\mathbb R^N)$ be a weak solution of \eqref{eq_main} which is stable outside a compact set, and define
\[
u_R(x)=R^\alpha u(Rx).
\]
For every sequence $R_j\to\infty$ there are a subsequence, not relabelled, and
\[
U\in W^{1,p}_{\mathrm{loc}}(\mathbb R^N\setminus\{0\})\cap L^{q+1}_{\mathrm{loc}}(\mathbb R^N\setminus\{0\})
\]
such that
\[
u_{R_j}\to U\quad\text{strongly in }W^{1,p}_{\mathrm{loc}}(\mathbb R^N\setminus\{0\})
\quad\text{and in }L^{q+1}_{\mathrm{loc}}(\mathbb R^N\setminus\{0\}).
\]
The limit $U$ is a weak solution of \eqref{eq_main} in $\mathbb R^N\setminus\{0\}$ and is stable there. In particular, if such a blow-down limit is homogeneous of degree $-\alpha$, then Theorem \ref{thm_qc_hom} gives
\[
U\equiv0\qquad\text{or}\qquad U=\pm c_s|x|^{-\alpha}.
\]
\end{proposition}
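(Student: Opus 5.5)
The plan is to obtain uniform local bounds on the rescaled family $u_R$ from Lemma \ref{lem_stab}, extract a limit by weak compactness, upgrade to strong convergence through the monotonicity of $\xi\mapsto|\xi|^{p-2}\xi$, and then pass to the limit in the equation and in the quadratic form using Lemma \ref{lem_nem}.

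\emph{Uniform bounds.} Fix a compact set $\Sigma\subset\mathbb R^N\setminus\{0\}$ and cover it by finitely many annuli $B_{4\rho}\setminus B_\rho$. For $\delta\in[1,\delta_0)$ put $m_\delta=\frac{p(q+\delta)}{q-p+1}$, so that $\alpha(q+\delta)=m_\delta$ and $\alpha(\delta-1)+(\alpha+1)p=m_\delta$. A change of variables combined with Lemma \ref{lem_stab} (with the cutoff of Corollary \ref{cor_annuli} at scale $\rho R$) gives, once $\rho R\ge8R_0$,
\[
\int_{B_{4\rho}\setminus B_\rho}\Big(|u_R|^{q+\delta}+|u_R|^{\delta-1}|\nabla u_R|^p\Big)\,dx
=R^{m_\delta-N}\int_{B_{4\rho R}\setminus B_{\rho R}}\Big(|u|^{q+\delta}+|u|^{\delta-1}|\nabla u|^p\Big)\,dx\le C\rho^{N-m_\delta}.
\]
The bound is uniform in $R$ because the cutoff has $|\nabla\eta|\le C/(\rho R)$ on a region of volume $C(\rho R)^N$. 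Taking $\delta=1$ shows that $u_R$ is bounded in $W^{1,p}(\Sigma)\cap L^{q+1}(\Sigma)$. Fixing in addition some $\bar\delta\in(1,\delta_0)$, which is possible since $\delta_0>1$, shows that $u_R$ is bounded in $L^{q+\bar\delta}(\Sigma)$.

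At $q=q_c$ we have $\delta_0=\delta_1$ (Lemma \ref{lem_qc}), so the Serrin exponent $k=\frac{q+\delta}{q-p+1}$ never exceeds $N/p$. Hence no uniform $L^\infty$ or $C^1$ bound is available, and this is the main obstacle. Compactness must therefore be obtained in integral norms only.

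\emph{Extraction and strong convergence.} By a diagonal argument over an exhaustion of $\mathbb R^N\setminus\{0\}$, and by Rellich, a subsequence satisfies $u_j:=u_{R_j}\rightharpoonup U$ weakly in $W^{1,p}_{\rm loc}$, strongly in $L^p_{\rm loc}$, and almost everywhere. Since $q+1>p^*$, Rellich does not reach $L^{q+1}$. Instead, the uniform $L^{q+\bar\delta}$ bound with $\bar\delta>1$ gives uniform integrability of $|u_j|^{q+1}$, so Vitali's theorem yields $u_j\to U$ in $L^{q+1}_{\rm loc}$. Then $|u_j|^{q-1}u_j\to|U|^{q-1}U$ in $L^{(q+1)/q}_{\rm loc}$.

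Fix a nonnegative $\phi\in C_c^1(\mathbb R^N\setminus\{0\})$ and test the equation for $u_j$ with $(u_j-U)\phi$. The right-hand side tends to $0$, since $|u_j|^q$ is bounded in $L^{(q+1)/q}$ and $u_j-U\to0$ in $L^{q+1}$ on $\operatorname{supp}\phi$. The term containing $\nabla\phi$ also tends to $0$, since $|\nabla u_j|^{p-1}$ is bounded in $L^{p'}$ and $u_j-U\to0$ in $L^p$. By weak convergence, $\int|\nabla U|^{p-2}\nabla U\cdot\nabla(u_j-U)\phi\to0$. Combining these,
\[
\int\big(|\nabla u_j|^{p-2}\nabla u_j-|\nabla U|^{p-2}\nabla U\big)\cdot(\nabla u_j-\nabla U)\,\phi\to0 .
\]
For $p\ge2$ the integrand dominates $c|\nabla u_j-\nabla U|^p$, which gives strong convergence in $W^{1,p}_{\rm loc}$. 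It follows that $|\nabla u_j|^{p-2}\nabla u_j\to|\nabla U|^{p-2}\nabla U$ in $L^{p'}_{\rm loc}$, and together with the convergence of the nonlinearity this shows that $U$ is a weak solution in $\mathbb R^N\setminus\{0\}$.

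\emph{Stability and the final claim.} The function $u_j$ is a scaled solution, and scaling of the quadratic form as in the proof of Theorem \ref{thm_qc_hom} shows that $Q_{u_j}(\varphi)\ge0$ for every $\varphi$ supported outside $B_{R_0/R_j}$. Given $\varphi\in C_c^1(\mathbb R^N\setminus\{0\})$, this holds for all large $j$. To pass to the limit in $Q_{u_j}(\varphi)$:
\begin{enumerate}[label=(\alph*)]
\item by Lemma \ref{lem_nem}(ii), $\mathcal M(\nabla u_j)\to\mathcal M(\nabla U)$ in $L^{p/(p-2)}_{\rm loc}$ when $p>2$ (the case $p=2$ is trivial), and this term is paired with $\nabla\varphi\otimes\nabla\varphi\in L^\infty$;
\item by Lemma \ref{lem_nem}(i), $|u_j|^{q-1}\to|U|^{q-1}$ in $L^{(q+1)/(q-1)}_{\rm loc}$.
\end{enumerate}
Hence $Q_U(\varphi)\ge0$, so $U$ is stable in $\mathbb R^N\setminus\{0\}$. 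If $U$ is in addition $(-\alpha)$-homogeneous, it satisfies all the hypotheses of Theorem \ref{thm_qc_hom} with any $R>0$, and part (ii) of that theorem gives $U\equiv0$ or $U=\pm c_s|x|^{-\alpha}$.
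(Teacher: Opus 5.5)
Your proposal is correct and follows essentially the same route as the paper. You take uniform integral bounds on the rescalings from the stability estimate, extract a weak limit, obtain $L^{q+1}$ convergence from the extra $L^{q+\delta}$ integrability, and upgrade to strong gradient convergence by the monotonicity of $\xi\mapsto|\xi|^{p-2}\xi$; you then pass to the limit in $Q$ via scaling and Lemma~\ref{lem_nem}. The only differences are cosmetic: you read the uniform $W^{1,p}$ bound directly off the gradient term of Lemma~\ref{lem_stab} with $\delta=1$, where the paper uses a Caccioppoli estimate, and you use Vitali's theorem where the paper interpolates.
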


\begin{proof}
At $q=q_c$ we have, in the notation of Remark \ref{rem_qcbig},
\[
\delta_0=\delta_1:=\frac{N(q-p+1)}p-q>1.
\]
Fix three annuli $\Omega\Subset\Omega'\Subset\Omega''\Subset\mathbb R^N\setminus\{0\}$. For any $\delta\in(1,\delta_0)$, Corollary \ref{cor_annuli}, after scaling and covering $\Omega''$ by finitely many annuli of fixed ratio, gives
\begin{equation}\label{eq_blow_Lqd}
\sup_j\int_{\Omega''}|u_{R_j}|^{q+\delta}\,dx<\infty.
\end{equation}
Indeed, the power of $R_j$ cancels because $\alpha(q+\delta)=p(q+\delta)/(q-p+1)$.

Let $\chi''\in C_c^1(\Omega'')$ satisfy $\chi''\equiv1$ on $\Omega'$. Testing the equation for $u_{R_j}$ with $u_{R_j}(\chi'')^p$ and using Young's inequality yields the local Caccioppoli bound
\[
\int_{\Omega'}|\nabla u_{R_j}|^p\,dx
\le C\int_{\Omega''}\big(|u_{R_j}|^{q+1}+|u_{R_j}|^p\big)\,dx\le C,
\]
where the last inequality follows from \eqref{eq_blow_Lqd} and from $q+1<q+\delta$. Hence, after passing to a subsequence,
$u_{R_j}\rightharpoonup U$ in $W^{1,p}(\Omega')$ and $u_{R_j}\to U$ strongly in $L^p(\Omega')$. Interpolating the latter convergence with the uniform $L^{q+\delta}$ bound gives
\begin{equation}\label{eq_blow_Lq1}
u_{R_j}\to U\qquad\text{strongly in }L^{q+1}(\Omega').
\end{equation}

We upgrade the gradient convergence. Let $\chi\in C_c^1(\Omega')$ satisfy $\chi\equiv1$ on $\Omega$, put $\mathcal A(\xi)=|\xi|^{p-2}\xi$ and $w_j=u_{R_j}-U$. Testing the equation of $u_{R_j}$ with $\chi^p w_j$ gives
\[
\int\chi^p\mathcal A(\nabla u_{R_j})\cdot\nabla w_j
=\int |u_{R_j}|^{q-1}u_{R_j}\chi^p w_j
-p\int\chi^{p-1}w_j\mathcal A(\nabla u_{R_j})\cdot\nabla\chi.
\]
The first term on the right tends to zero by \eqref{eq_blow_Lq1}, and the second tends to zero by the strong $L^p$ convergence of $w_j$ and the uniform $L^{p/(p-1)}$ bound of $\mathcal A(\nabla u_{R_j})$. Since $\nabla u_{R_j}\rightharpoonup\nabla U$ in $L^p$,
\[
\int\chi^p\mathcal A(\nabla U)\cdot(\nabla u_{R_j}-\nabla U)\to0.
\]
The uniform monotonicity inequality for $p\ge2$ therefore gives
\[
\int_{\Omega}|\nabla u_{R_j}-\nabla U|^p\,dx\to0.
\]
Consequently $u_{R_j}\to U$ strongly in $W^{1,p}(\Omega)$, and passing to the limit in the weak formulation shows that $U$ solves \eqref{eq_main} on $\Omega$.

So far the subsequence depends on $\Omega$. Applying what precedes to the exhaustion $\Omega_k=\{k^{-1}<|x|<k\}$, $k\ge2$, and extracting a diagonal subsequence, we obtain one subsequence along which $u_{R_j}$ converges strongly in $W^{1,p}(\Omega_k)$ and in $L^{q+1}(\Omega_k)$ for every $k$; the limits agree on the overlaps and define a single $U$ on $\mathbb R^N\setminus\{0\}$, which solves \eqref{eq_main} there.

Finally let $\varphi\in C_c^1(\mathbb R^N\setminus\{0\})$. A change of variables as in the proof of Theorem \ref{thm_qc_hom} gives $Q_{u_R}(\varphi)=R^{\,2+(\alpha+1)(p-2)-N}\,Q_u\big(\varphi(\cdot/R)\big)$, the factor being positive. For $j$ large the support of $\varphi(\cdot/R_j)$ lies outside the compact set where $u$ may be unstable, hence $Q_{u_{R_j}}(\varphi)\ge0$. Let $\Omega_k$ contain the support of $\varphi$ and let $p>2$. The strong convergence of $\nabla u_{R_j}$ in $L^p(\Omega_k)$ and Lemma \ref{lem_nem}(ii) give
$\mathcal M(\nabla u_{R_j})\to\mathcal M(\nabla U)$ in $L^{p/(p-2)}(\Omega_k)$, and the strong convergence of $u_{R_j}$ in $L^{q+1}(\Omega_k)$ with Lemma \ref{lem_nem}(i) gives
$|u_{R_j}|^{q-1}\to|U|^{q-1}$ in $L^{(q+1)/(q-1)}(\Omega_k)$. Since $Q_{u_{R_j}}(\varphi)=\int\mathcal M(\nabla u_{R_j})\nabla\varphi\cdot\nabla\varphi-q\int|u_{R_j}|^{q-1}\varphi^2$, it follows that
$Q_{u_{R_j}}(\varphi)\to Q_U(\varphi)$. For $p=2$ the matrix coefficient is constant and the same conclusion is immediate. Thus $Q_U(\varphi)\ge0$.
\end{proof}

\begin{proposition}[$\omega$-limit structure at $q_c$]\label{prop_qc_omega}
Under the hypotheses of Proposition \ref{prop_qc_blowdown}, equip
$\mathcal X=W^{1,p}_{\mathrm{loc}}(\mathbb R^N\setminus\{0\})$ with its usual Fr\'echet topology and set
\[
\Omega_\infty(u)=\left\{U\in\mathcal X:\ u_{R_j}\to U\text{ in }\mathcal X\text{ for some }R_j\to\infty\right\}.
\]
Then $\Omega_\infty(u)$ is a nonempty compact connected subset of $\mathcal X$, and it is invariant under the scaling action
\[
(\mathcal S_\lambda U)(x)=\lambda^\alpha U(\lambda x),\qquad \lambda>0.
\]
Finally, the following statements are equivalent:
\begin{enumerate}[label=\rm(\roman*)]
\item every element of $\Omega_\infty(u)$ is homogeneous of degree $-\alpha$;
\item $\Omega_\infty(u)$ consists of a single element;
\item the full family $u_R$ converges in $\mathcal X$ as $R\to\infty$.
\end{enumerate}
Whenever these conditions hold, the limit is necessarily
\[
0,\qquad c_s|x|^{-\alpha},\qquad\text{or}\qquad -c_s|x|^{-\alpha}.
\]
\end{proposition}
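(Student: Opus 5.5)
The plan is to treat $\Omega_\infty(u)$ as the $\omega$-limit set of the continuous curve $R\mapsto u_R$ in the metrizable space $\mathcal X$, with Proposition \ref{prop_qc_blowdown} supplying precompactness. Nonemptiness is immediate from that proposition applied to $R_j=j$. For compactness I will use that $\Omega_\infty(u)$ is closed: it is the intersection over $T$ of the closures of $\{u_R:R\ge T\}$, and a diagonal argument over a countable base of neighbourhoods works since $\mathcal X$ is Fréchet. I will then show that it lies in a compact set. If $U^{(k)}\in\Omega_\infty(u)$, choose $R_k$ large with $u_{R_k}$ within $1/k$ of $U^{(k)}$ in a fixed metric, and apply Proposition \ref{prop_qc_blowdown} to $(R_k)$. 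This gives sequential compactness, hence compactness.

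For connectedness I will first check that $R\mapsto u_R$ is continuous into $\mathcal X$ for $R\in[1,\infty)$; this follows because $u\in C^1$, so the rescalings vary continuously in $C^1$ on compact subsets of $\mathbb R^N\setminus\{0\}$. Then I will run the standard argument. Suppose $\Omega_\infty(u)=F_1\cup F_2$ with $F_1,F_2$ disjoint, nonempty and closed, hence compact, and take disjoint open neighbourhoods $V_1,V_2$ at positive distance. The curve enters both infinitely often, so by connectedness of intervals there are $R_j\to\infty$ with $u_{R_j}\notin V_1\cup V_2$. By Proposition \ref{prop_qc_blowdown} a subsequence converges to a point of $\Omega_\infty(u)$ outside $V_1\cup V_2$, which is a contradiction.

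Scaling invariance comes from $\mathcal S_\lambda u_R=u_{\lambda R}$. The map $\mathcal S_\lambda$ is continuous on $\mathcal X$, since it maps compact annuli to compact annuli. Hence $u_{R_j}\to U$ implies $u_{\lambda R_j}\to\mathcal S_\lambda U$, so $\mathcal S_\lambda U\in\Omega_\infty(u)$.

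For the equivalences, (iii)$\Rightarrow$(ii) is trivial. For (ii)$\Rightarrow$(iii), if the full family failed to converge to the single element $U$, there would be $R_j\to\infty$ with $u_{R_j}$ outside a fixed neighbourhood of $U$. Proposition \ref{prop_qc_blowdown} would then produce a second element of $\Omega_\infty(u)$. For (ii)$\Rightarrow$(i), scaling invariance gives $\mathcal S_\lambda U=U$ for all $\lambda$, which is homogeneity of degree $-\alpha$. The main step is (i)$\Rightarrow$(ii). Every element is a homogeneous weak solution on $\mathbb R^N\setminus\{0\}$ that is stable there, by Proposition \ref{prop_qc_blowdown}. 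Theorem \ref{thm_qc_hom}(ii) therefore forces $\Omega_\infty(u)\subset\{0,c_s|x|^{-\alpha},-c_s|x|^{-\alpha}\}$. A connected subset of a three-point set, whose points are distinct and so separated in the Hausdorff space $\mathcal X$, is a singleton.

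The final assertion follows from the same application of Theorem \ref{thm_qc_hom}. The only delicate point I anticipate is checking that the limits in Proposition \ref{prop_qc_blowdown} are taken in the Fréchet topology of $\mathcal X$. They are, because that proposition gives strong $W^{1,p}$ convergence on every $\Omega_k$.
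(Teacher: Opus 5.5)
Your proposal is correct and follows the paper's proof in all essentials. You treat $\Omega_\infty(u)$ as the $\omega$-limit set of the continuous curve $R\mapsto u_R$, get precompactness from Proposition \ref{prop_qc_blowdown}, get scaling invariance from $\mathcal S_\lambda u_R=u_{\lambda R}$, and obtain (i)$\Rightarrow$(ii) from Theorem \ref{thm_qc_hom} together with connectedness. The differences are cosmetic: you prove connectedness by a direct separation argument (producing $u_{R_j}\notin V_1\cup V_2$), whereas the paper uses that a nested intersection of the compact connected sets $K_T=\overline{\{u_{e^t}:t\ge T\}}$ is connected; and you derive homogeneity from (ii) via scaling invariance rather than from (iii).
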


\begin{proof}
Write $v(t)=u_{e^t}$ for $t\ge0$. The map $t\mapsto v(t)$ is continuous from $[0,\infty)$ into $\mathcal X$. For $T\ge0$ let
\[
K_T=\overline{\{v(t):t\ge T\}}^{\,\mathcal X}.
\]
Proposition \ref{prop_qc_blowdown} implies that $K_T$ is compact. Indeed, from any sequence $v(t_j)$ with $t_j\ge T$, either $(t_j)$ has a bounded subsequence, in which case continuity gives convergence after a further subsequence, or $t_j\to\infty$ along a subsequence, in which case Proposition \ref{prop_qc_blowdown} applies. Since $[T,\infty)$ is connected, its continuous image is connected, and therefore so is its closure $K_T$. The sets $K_T$ are nested, nonempty and compact, so their intersection is nonempty and compact, and it is connected because a nested intersection of compact connected subsets of a Hausdorff space is connected. Moreover,
\[
\Omega_\infty(u)=\bigcap_{T\ge0}K_T.
\]
The inclusion from left to right is immediate. Conversely, since the Fr\'echet space $\mathcal X$ is metrizable, if $U$ belongs to every $K_T$, then for each integer $n$ one can choose $t_n\ge n$ with $d_{\mathcal X}(v(t_n),U)<1/n$; hence $t_n\to\infty$ and $v(t_n)\to U$, so $U\in\Omega_\infty(u)$. Therefore $\Omega_\infty(u)$ is nonempty, compact and connected.

If $U\in\Omega_\infty(u)$ and $u_{R_j}\to U$, then for every $\lambda>0$,
\[
u_{\lambda R_j}=\mathcal S_\lambda u_{R_j}\longrightarrow \mathcal S_\lambda U
\quad\text{in }\mathcal X,
\]
so $\mathcal S_\lambda U\in\Omega_\infty(u)$. Thus the set is scaling-invariant.

If (iii) holds then every subsequential limit of $u_R$ equals the full limit, so $\Omega_\infty(u)$ is a singleton and (ii) holds. Conversely, if $\Omega_\infty(u)=\{U\}$ but the full family did not converge to $U$, there would be an $\mathcal X$-neighborhood $\mathcal U$ of $U$ and a sequence $R_j\to\infty$ with $u_{R_j}\notin\mathcal U$. Proposition \ref{prop_qc_blowdown} would then yield a subsequence converging in $\mathcal X$ to an element of $\Omega_\infty(u)$ distinct from $U$, a contradiction. Hence (ii) implies (iii).

If (iii) holds with limit $U$, then for every $\lambda>0$,
\[
\mathcal S_\lambda U
=\lim_{R\to\infty}\mathcal S_\lambda u_R
=\lim_{R\to\infty}u_{\lambda R}=U,
\]
so $U$ is homogeneous of degree $-\alpha$. Thus (iii) implies (i).

Finally assume (i). Every element of $\Omega_\infty(u)$ is a stable weak solution by Proposition \ref{prop_qc_blowdown}, so Theorem \ref{thm_qc_hom} gives
\[
\Omega_\infty(u)\subset\{0,\ c_s|x|^{-\alpha},\ -c_s|x|^{-\alpha}\}.
\]
The set on the right is a finite, hence discrete, subset of $\mathcal X$, whereas $\Omega_\infty(u)$ is connected. Hence $\Omega_\infty(u)$ is a singleton, proving (i) implies (ii) and the final alternative.
\end{proof}

\begin{remark}[What remains at $q=q_c$]
Proposition \ref{prop_qc_blowdown} shows that the rescaled orbit is precompact on every annulus, while Theorem \ref{thm_qc_hom} classifies all homogeneous stable limits without a sign assumption. Consequently, a complete asymptotic classification at the Joseph--Lundgren endpoint would follow once one proves that every blow-down limit is homogeneous. Proposition \ref{prop_qc_omega} shows that this is in fact equivalent to uniqueness of the blow-down: either property forces the full rescaled family to converge to one of $0$ and $\pm c_s|x|^{-\alpha}$. This is exactly the point at which the lack of a monotonicity formula for $p\ne2$ remains relevant. We do not claim such a homogeneity theorem here.
\end{remark}

\subsection{Proof of Theorem \ref{thm_sing} and of Proposition \ref{prop_radial}}

\begin{proof}[Proof of Theorem \ref{thm_sing}]
By translation we may take $x_0=0$ and write $u_s(x)=c_s|x|^{-\alpha}$.

\emph{Step 1: $u_s$ is a weak solution.} Lemma \ref{lem_super} with $B=0$ and $A=c_s$ gives, in $\mathbb{R}^N\setminus\{0\}$,
\[
-\Delta_p u_s=\alpha^{p-1}(p-1)(\gamma-\alpha)\,c_s^{p-1}|x|^{-\alpha q}=c_s^{q}|x|^{-\alpha q}=u_s^{q},
\]
by the definition \eqref{eq_cs} of $c_s$. By \eqref{eq_alphasmall}, $(\alpha+1)p<N$, so $\int_{B_1}|\nabla u_s|^p\,dx=C\int_0^1r^{N-1-(\alpha+1)p}dr<\infty$ and $u_s\in W^{1,p}_{\mathrm{loc}}(\mathbb{R}^N)$. Also $\alpha q=1+(\alpha+1)(p-1)=(\alpha+1)p-\alpha<N$, so $u_s^q\in L^1_{\mathrm{loc}}(\mathbb{R}^N)$. Let $\varphi\in C_c^1(\mathbb{R}^N)$ and let $\zeta_\varepsilon\in C^\infty(\mathbb{R}^N)$ satisfy $0\le\zeta_\varepsilon\le1$, $\zeta_\varepsilon\equiv0$ on $B_\varepsilon$, $\zeta_\varepsilon\equiv1$ outside $B_{2\varepsilon}$ and $|\nabla\zeta_\varepsilon|\le2/\varepsilon$. Testing the equation on $\mathbb{R}^N\setminus\{0\}$ with $\varphi\zeta_\varepsilon$,
\[
\int|\nabla u_s|^{p-2}\nabla u_s\cdot\nabla\varphi\,\zeta_\varepsilon
+\int|\nabla u_s|^{p-2}\varphi\,\nabla u_s\cdot\nabla\zeta_\varepsilon
=\int u_s^q\varphi\,\zeta_\varepsilon .
\]
The middle term is bounded in absolute value by
\[
\frac{2\|\varphi\|_\infty}{\varepsilon}\int_{B_{2\varepsilon}}(\alpha c_s)^{p-1}|x|^{-(\alpha+1)(p-1)}dx
\le\frac{C}{\varepsilon}\,\varepsilon^{\,N-(\alpha+1)(p-1)}=C\varepsilon^{\,(p-1)(\gamma-\alpha)}\xrightarrow[\varepsilon\to0]{}0,
\]
because $N-1-(\alpha+1)(p-1)=(p-1)(\gamma-\alpha)>0$. The two remaining terms converge by dominated convergence, so $u_s$ is a weak solution of \eqref{eq_main} in $\mathbb{R}^N$.

\emph{Step 2: the quadratic form.} On $\mathbb{R}^N\setminus\{0\}$ we have $\nabla u_s=-\alpha c_s|x|^{-\alpha-1}\frac{x}{|x|}$, hence $|\nabla u_s|=\alpha c_s|x|^{-\alpha-1}$ and $\nabla u_s\cdot\nabla\varphi=-\alpha c_s|x|^{-\alpha-1}\partial_r\varphi$. Therefore
\[
|\nabla u_s|^{p-4}\big(\nabla u_s\cdot\nabla\varphi\big)^2=(\alpha c_s)^{p-2}|x|^{-\beta}\big(\partial_r\varphi\big)^2 ,
\]
with $\beta$ as in \eqref{eq_beta}, and, since $\alpha(q-1)=\alpha q-\alpha=1+(\alpha+1)(p-1)-\alpha=\beta+2$,
\begin{equation}\label{eq_Qsing}
Q_{u_s}(\varphi)=(\alpha c_s)^{p-2}\int_{\mathbb{R}^N}|x|^{-\beta}\Big[|\nabla\varphi|^2+(p-2)\big(\partial_r\varphi\big)^2\Big]dx
-q\,c_s^{q-1}\int_{\mathbb{R}^N}|x|^{-\beta-2}\varphi^2\,dx
\end{equation}
for every $\varphi\in C_c^1(\mathbb{R}^N\setminus\{0\})$. By \eqref{eq_cs},
\begin{equation}\label{eq_Lambda}
\frac{q\,c_s^{q-1}}{(\alpha c_s)^{p-2}}=\frac{q\,c_s^{q-p+1}}{\alpha^{p-2}}=q\,\alpha(p-1)(\gamma-\alpha) .
\end{equation}

\emph{Step 3: (iii) implies (i).} Assume $q\ge q_c(N,p)$. By Lemma \ref{lem_algebra} and \eqref{eq_Hdef},
\[
\left(\frac{N-2-\beta}{2}\right)^2\ge q\,\alpha(\gamma-\alpha),
\]
so, multiplying by $p-1$ and using \eqref{eq_Lambda}, Lemma \ref{lem_hardy} with $\sigma=p-2$ gives, for every $\varphi\in C_c^1(\mathbb{R}^N\setminus\{0\})$,
\[
\int|x|^{-\beta}\Big[|\nabla\varphi|^2+(p-2)(\partial_r\varphi)^2\Big]dx
\ge(p-1)\left(\frac{N-2-\beta}{2}\right)^2\int|x|^{-\beta-2}\varphi^2dx
\ge\frac{q\,c_s^{q-1}}{(\alpha c_s)^{p-2}}\int|x|^{-\beta-2}\varphi^2dx .
\]
With \eqref{eq_Qsing} this is $Q_{u_s}(\varphi)\ge0$, which is (i). Note that \eqref{eq_Nbeta} guarantees $N-2-\beta\ne0$, so Lemma \ref{lem_hardy} applies.

\emph{Step 4: (i) implies (ii).} Immediate, with $K=\overline{B_1}$.

\emph{Step 5: (ii) implies (iii).} The function $u_s$ is $(-\alpha)$-homogeneous and not identically zero. If (ii) holds and $q<q_c(N,p)$, then part (i) of Theorem \ref{thm_qc_hom} gives $u_s\equiv0$, a contradiction. Hence $q\ge q_c(N,p)$.
\end{proof}

\begin{proof}[Proof of Proposition \ref{prop_radial}]
\emph{Step 1: the radial identity.} Write $u=u(r)$, $r=|x|$, and set
\[
G(r)=r^{N-1}|u'(r)|^{p-2}u'(r).
\]
Testing \eqref{eq_main} with radial test functions gives
\[
G'=-r^{N-1}u^q
\]
in the sense of distributions on $(0,\infty)$. The right hand side is continuous, so $G\in C^1(0,\infty)$. Since
\[
|G(r)|\le r^{N-1}\sup_{B_1}|\nabla u|^{p-1}\longrightarrow0
\qquad\text{as }r\to0,
\]
we obtain $G(r)=-\int_0^r s^{N-1}u(s)^q\,ds<0$ for $r>0$. Hence $u'<0$ on $(0,\infty)$, so $u$ is strictly decreasing and $|u'|^{p-2}u'=-(-u')^{p-1}$. Therefore
\begin{equation}\label{eq_radialint}
r^{N-1}\big(-u'(r)\big)^{p-1}=\int_0^r s^{N-1}u(s)^q\,ds\qquad(r>0).
\end{equation}
In the notation of Lemma \ref{lem_super},
\[
m=N-1-(\alpha+1)(p-1)=(p-1)(\gamma-\alpha)>0.
\]
Since $\alpha q=1+(\alpha+1)(p-1)$, this gives $\alpha q+m=N$.

\emph{Step 2: a planar autonomous system.} Put $t=\log r$ and
\[
W(t)=r^{\alpha}u(r),\qquad F(t)=\Big(r^{\,\alpha+1}\big(-u'(r)\big)\Big)^{p-1},\qquad r=e^{t}.
\]
Both are positive, by Step 1. Since $\dot W=\alpha W+r^{\alpha+1}u'$, we have $r^{\alpha+1}(-u')=\alpha W-\dot W$, which is therefore positive, and $F=(\alpha W-\dot W)^{p-1}$, that is
\[
\dot W=\alpha W-F^{\frac1{p-1}} .
\]
Substituting $s=e^{\tau}$ and $u(s)=e^{-\alpha\tau}W(\tau)$ in \eqref{eq_radialint}, and using $N-1-\alpha q=m-1$, gives
\[
e^{\,mt}F(t)=r^{N-1}(-u')^{p-1}=\int_0^{r}s^{\,m-1}\big(s^{\alpha}u(s)\big)^{q}ds
=\int_{-\infty}^{t}e^{\,m\tau}W(\tau)^{q}\,d\tau ,
\]
the integral converging at $-\infty$ because $m>0$ and $W(\tau)\to0$ as $\tau\to-\infty$. Hence
\[
F(t)=\int_{-\infty}^{t}e^{\,m(\tau-t)}W(\tau)^{q}\,d\tau,
\qquad\text{so}\qquad
\dot F=W^{q}-mF .
\]
Thus $(W,F)$ is a trajectory of the autonomous system
\begin{equation}\label{eq_phase}
\dot W=\alpha W-F^{\frac1{p-1}},\qquad \dot F=W^{q}-mF
\end{equation}
in the open quadrant $\mathcal Q=\{W>0,\ F>0\}$, where the vector field is of class $C^1$.

\emph{Step 3: the orbit is relatively compact in $\mathcal Q$.} Put $\ell=\liminf_{t\to\infty}W(t)$ and $L=\limsup_{t\to\infty}W(t)$, so that $0<\ell\le L<\infty$ by \eqref{eq_slow}. Let $\epsilon>0$ and pick $T$ with $\ell^{q}-\epsilon\le W(\tau)^{q}\le L^{q}+\epsilon$ for $\tau\ge T$. Splitting the integral for $F$ at $T$, using $\int_{-\infty}^{t}e^{m(\tau-t)}d\tau=\frac1m$ and the boundedness of $W$ on $(-\infty,T]$, the part over $(-\infty,T]$ is $O(e^{m(T-t)})$ and we obtain
\[
\frac{\ell^{q}-\epsilon}{m}\le\liminf_{t\to\infty}F(t)\le\limsup_{t\to\infty}F(t)\le\frac{L^{q}+\epsilon}{m} .
\]
As $\epsilon$ is arbitrary, $\liminf F\ge\ell^q/m>0$ and $\limsup F\le L^q/m<\infty$. Hence the forward orbit $\{(W(t),F(t)):t\ge T_0\}$ has compact closure in $\mathcal Q$ for $T_0$ large.

\emph{Step 4: the only equilibrium, and no cycles.} The equilibria of \eqref{eq_phase} in $\mathcal Q$ satisfy $F=W^{q}/m$ and $(\alpha W)^{p-1}=F$, hence $W^{q-p+1}=m\alpha^{p-1}=\alpha^{p-1}(p-1)(\gamma-\alpha)=c_s^{q-p+1}$. So $P=\big(c_s,(\alpha c_s)^{p-1}\big)$ is the unique equilibrium in $\mathcal Q$. The divergence of the vector field in \eqref{eq_phase} is the constant
\[
\alpha-m=\alpha-(p-1)(\gamma-\alpha)=p\alpha-(p-1)\gamma=p\alpha-(N-p)<0,
\]
the inequality being \eqref{eq_alphasmall}, that is $q>p^*-1$. Let $\Gamma$ be any periodic orbit of \eqref{eq_phase} and let $D$ be the region it encloses, so $D\subset\mathcal Q$ because $\mathcal Q$ is simply connected. The vector field is tangent to $\Gamma$, and Green's theorem gives $\int_D(\alpha-m)\,dWdF=0$, which is impossible since $|D|>0$. The same conclusion excludes a homoclinic loop at $P$: excise a disk $B_\rho(P)$, apply Green's theorem to the region enclosed by the loop with $B_\rho(P)$ removed, and let $\rho\downarrow0$; the flux across $\partial B_\rho(P)$ tends to zero because the vector field is $C^1$ and vanishes at $P$. Hence \eqref{eq_phase} has neither a periodic orbit nor a homoclinic loop in $\mathcal Q$.

\emph{Step 5: convergence.} By Step 3 the $\omega$-limit set of the forward orbit is a nonempty compact connected invariant subset of $\mathcal Q$. By the Poincar\'e--Bendixson theorem it is a periodic orbit, an equilibrium, or a union of equilibria and orbits connecting them. The first is excluded by Step 4. In the third case, $P$ being the only equilibrium in $\mathcal Q$, the set would contain a homoclinic loop at $P$, which Step 4 also excludes. Hence the $\omega$-limit set is an equilibrium, necessarily $P$. Therefore $W(t)\to c_s$ and $F(t)\to(\alpha c_s)^{p-1}$, which is exactly
\[
\lim_{r\to\infty}r^{\alpha}u(r)=c_s,\qquad \lim_{r\to\infty}r^{\alpha+1}\big(-u'(r)\big)=\alpha c_s ,
\]
and \eqref{eq_slowexact} follows because $|\nabla u|=-u'$.

\emph{Step 6: stability when $q>q_c(N,p)$.} Here Lemma \ref{lem_algebra} gives $H(q-p+1)>0$, that is
\[
(p-1)\left(\frac{N-2-\beta}{2}\right)^2>q\,\alpha(p-1)(\gamma-\alpha)=\frac{q\,c_s^{q-1}}{(\alpha c_s)^{p-2}} ,
\]
the last equality being \eqref{eq_Lambda}. Choose $\varepsilon\in(0,1)$ so small that
\[
(1-\varepsilon)^{p-2}(p-1)\left(\frac{N-2-\beta}{2}\right)^2\ \ge\ (1+\varepsilon)^{q-1}\,\frac{q\,c_s^{q-1}}{(\alpha c_s)^{p-2}} ,
\]
which is possible because the two sides are continuous in $\varepsilon$ and the inequality is strict at $\varepsilon=0$. By \eqref{eq_slowexact} there is $R_\varepsilon>0$ with
\[
u(x)\le(1+\varepsilon)c_s|x|^{-\alpha}
\qquad\text{and}\qquad
|\nabla u(x)|\ge(1-\varepsilon)\alpha c_s|x|^{-\alpha-1}
\qquad\text{for } |x|\ge R_\varepsilon .
\]
Let $\varphi\in C_c^1(\{|x|>R_\varepsilon\})$. Since $u$ is radial, $\nabla u=u'(|x|)\frac{x}{|x|}$ and therefore $|\nabla u|^{p-4}(\nabla u\cdot\nabla\varphi)^2=|\nabla u|^{p-2}(\partial_r\varphi)^2$, so
\[
Q_u(\varphi)=\int|\nabla u|^{p-2}\Big[|\nabla\varphi|^2+(p-2)(\partial_r\varphi)^2\Big]dx-q\int u^{q-1}\varphi^2dx .
\]
The bracket is nonnegative because $p\ge2$, so the first integral is at least
$\big((1-\varepsilon)\alpha c_s\big)^{p-2}\int|x|^{-\beta}\big[|\nabla\varphi|^2+(p-2)(\partial_r\varphi)^2\big]dx$, while the second is at most $q\big((1+\varepsilon)c_s\big)^{q-1}\int|x|^{-\beta-2}\varphi^2dx$, using $\alpha(q-1)=\beta+2$. Lemma \ref{lem_hardy} with $\sigma=p-2$ and the choice of $\varepsilon$ now give $Q_u(\varphi)\ge0$. Hence $u$ is stable outside $\overline{B_{R_\varepsilon}}$.
\end{proof}

\begin{remark}\label{rem_psmall}
Three steps use $p\ge2$. In Lemma \ref{lem_stab} the bound $|\nabla u|^{p-4}(\nabla u\cdot\nabla\varphi)^2\le|\nabla u|^{p-2}|\nabla\varphi|^2$ is applied with the factor $p-2\ge0$, and for $1<p<2$ the corresponding constant in front of the quadratic form is $1$ rather than $p-1$, which changes $\delta_0$ and hence the threshold. In Lemma \ref{lem_super} the factor $(1+t)^{p-2}$ is at least $1$ only when $p\ge2$, and for $1<p<2$ it is smaller than $1$, so the comparison function has to be corrected by an amount depending on the ratio $B/A$ at the inner radius. In Lemma \ref{lem_hardy} the parameter $\sigma=p-2$ has to be nonnegative, and for $1<p<2$ the constant in \eqref{eq_hardy} becomes $(p-1)\big(\frac{N-2-\beta}{2}\big)^2$ only if one first discards the angular term, which is wasteful. The first two points seem to be technical rather than structural. The third is not, since it would change the threshold in Theorem \ref{thm_sing} as well.
\end{remark}

\section{The Sobolev endpoint and the Morse index of the extremals}\label{sec_index}

\begin{proof}[Proof of Theorem \ref{thm_sob_endpoint}]
Let $q=p^*-1$. Proposition \ref{prop_decay} applies at the endpoint and gives $|x|^\alpha|u(x)|\to0$, and $p^*-1>\frac{N(p-1)}{N-p}$, so Theorem \ref{thm_decay} applies and yields \eqref{eq_critical_energy}. Its last assertion is vacuous here, which is why the Sobolev endpoint survives.

Assume now that $u$ has finite Morse index. Since $u\in\mathcal D^{1,p}(\mathbb R^N)$ by \eqref{eq_critical_energy}, \cite[Thm.~1.4]{FMW2019} gives $n(u)\le i(u)$ when $p>2$. We record the corresponding elementary argument for $p=2$. Let $\omega_1,\ldots,\omega_k$ be distinct nodal regions of $u$. By the nodal-domain approximation of \cite[Lem.~3.1]{FMW2019}, for each $j$ there are functions $v_{j,n}\in C_c^1(\omega_j)$ converging in $\mathcal D^{1,2}(\mathbb R^N)$ to the restriction of $u$ to $\omega_j$, extended by zero. Testing the equation on the nodal region gives
\[
\int_{\omega_j}|\nabla u|^2=\int_{\omega_j}|u|^{2^*},
\]
and therefore
\[
Q_u(v_{j,n})\longrightarrow -(2^*-2)\int_{\omega_j}|u|^{2^*}<0.
\]
For $n$ large the $k$ functions $v_{j,n}$ have pairwise disjoint supports, so the quadratic form is negative definite on their span. Thus $k\le i(u)$. Since $k$ was arbitrary, the total number $n(u)$ of nodal regions is finite and satisfies $n(u)\le i(u)$ also for $p=2$.

Suppose $i(u)\le1$. Then $n(u)\le1$, so $u$ does not change sign. It remains to treat a solution of one sign, with or without the Morse index hypothesis. Either $u\equiv0$, or the strong maximum principle makes $u$ positive after a change of sign, and the classification of positive $\mathcal D^{1,p}(\mathbb R^N)$ solutions of the critical equation gives $u=u_{\lambda,x_0}$ for some $\lambda>0$ and $x_0\in\mathbb R^N$. That classification is \cite{Sciunzi2016} for $p>2$ and \cite{CGS1989} for $p=2$.
\end{proof}

Throughout the remainder of this section $2<p<N$ and $q=p^*-1$. The quadratic form of Definition \ref{def_stable} satisfies $Q_{-u}=Q_u$, because it involves $u$ and $\nabla u$ only through $|u|$, $|\nabla u|$ and $(\nabla u\cdot\nabla\varphi)^2$, so the Morse index does not change under a change of sign. It is therefore enough to treat the positive member of the family \eqref{eq_bubble}, and for a fixed choice of $\lambda>0$ and $x_0\in\mathbb{R}^N$ we let $U$ denote it. We work in
\[
X:=\mathcal{D}^{1,p}(\mathbb{R}^N),\qquad \|u\|_X=\|\nabla u\|_{L^p},
\]
and with
\[
J(u)=\frac1p\int_{\mathbb{R}^N}|\nabla u|^p\,dx-\frac{1}{p^*}\int_{\mathbb{R}^N}|u|^{p^*}dx,
\qquad
\Phi(u)=\int_{\mathbb{R}^N}|\nabla u|^p\,dx-\int_{\mathbb{R}^N}|u|^{p^*}dx ,
\]
so that $\Phi(u)=J'(u)[u]$ and $\mathcal{N}=\{u\in X\setminus\{0\}:\Phi(u)=0\}$ is the Nehari manifold. Recall that $U\in X$, that $U$ is a critical point of $J$, and that $U$ attains the best constant
\[
S=\inf_{u\in X\setminus\{0\}}\frac{\|\nabla u\|_{L^p}^p}{\|u\|_{L^{p^*}}^p}
\]
in the Sobolev inequality, which is attained by the functions $u_{\lambda,x_0}$ \cite{Talenti1976}.

\begin{lemma}\label{lem_C2}
For $2<p<N$ the functionals $J$ and $\Phi$ are of class $C^2$ on $X$, and for every $u,v\in X$
\begin{equation}\label{eq_Jsecond}
J''(u)[v,v]=\int_{\mathbb{R}^N}|\nabla u|^{p-2}|\nabla v|^2dx+(p-2)\int_{\mathbb{R}^N}|\nabla u|^{p-4}(\nabla u\cdot\nabla v)^2dx-(p^*-1)\int_{\mathbb{R}^N}|u|^{p^*-2}v^2dx .
\end{equation}
In particular $J''(U)[v,v]=Q_U(v)$ for every $v\in X$, where $Q_U$ is the quadratic form of Section \ref{sec_intro} with $q=p^*-1$.
\end{lemma}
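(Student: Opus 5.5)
We split $J$ into the gradient part $J_1(u)=\frac1p\int|\nabla u|^p$ and the potential part $J_2(u)=\frac1{p^*}\int|u|^{p^*}$, and treat each as the composition of a bounded linear map with a Nemytskii operator on a Lebesgue space. For $J_1$, the map $u\mapsto\nabla u$ is an isometry from $X$ onto a closed subspace of $L^p(\mathbb R^N;\mathbb R^N)$. It therefore suffices to show that $g(\xi)=\frac1p\int|\xi|^p$ is $C^2$ on $L^p(\mathbb R^N;\mathbb R^N)$ when $p>2$. Its candidate derivatives are
\[
g'(\xi)[\zeta]=\int|\xi|^{p-2}\xi\cdot\zeta,\qquad g''(\xi)[\zeta,\zeta']=\int\mathcal M(\xi)\zeta\cdot\zeta'.
\]
By \eqref{eq_Mbound} and H\"older's inequality with exponents $\frac{p}{p-2},p,p$, the bilinear form $g''(\xi)$ is bounded by $(p-1)\|\xi\|_p^{p-2}\|\zeta\|_p\|\zeta'\|_p$. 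Lemma \ref{lem_nem}(ii) gives continuity of $\xi\mapsto\mathcal M(\xi)$ from $L^p$ into $L^{p/(p-2)}$, hence continuity of $\xi\mapsto g''(\xi)$ in operator norm. In the same way, the continuity of $\xi\mapsto|\xi|^{p-2}\xi$ from $L^p$ into $L^{p/(p-1)}$ follows from the observation in the proof of Lemma \ref{lem_nem}, applied with $a=p-1$ and $b=p$.

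The step that actually needs care is verifying that these candidates are Fr\'echet derivatives. Here I would use the pointwise $C^1$ property of $\xi\mapsto|\xi|^{p-2}\xi$ and the mean value formula
\[
|\xi+\zeta|^{p-2}(\xi+\zeta)-|\xi|^{p-2}\xi-\mathcal M(\xi)\zeta=\int_0^1\big(\mathcal M(\xi+s\zeta)-\mathcal M(\xi)\big)\zeta\,ds .
\]
Testing this against $\zeta'$, H\"older gives a bound by
\[
\sup_{s\in[0,1]}\|\mathcal M(\xi+s\zeta)-\mathcal M(\xi)\|_{L^{p/(p-2)}}\,\|\zeta\|_p\|\zeta'\|_p,
\]
which is $o(\|\zeta\|_p)\|\zeta'\|_p$ by the continuity from Lemma \ref{lem_nem}(ii). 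Since $\mathcal M$ is continuous into $L^{p/(p-2)}$, the supremum over the compact interval tends to zero uniformly in $s$. This shows that $g'$ is differentiable with derivative $g''$. The analogous first-order expansion of $g$ itself, using $\frac d{dt}\frac1p|\xi+t\zeta|^p=|\xi+t\zeta|^{p-2}(\xi+t\zeta)\cdot\zeta$, shows that $g$ is differentiable with derivative $g'$. Together these give $g\in C^2$.

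For $J_2$, the Sobolev inequality makes $X\hookrightarrow L^{p^*}$ continuous, so it suffices to show that $h(u)=\frac1{p^*}\int|u|^{p^*}$ is $C^2$ on $L^{p^*}$. Since $p^*>p>2$, the function $t\mapsto\frac1{p^*}|t|^{p^*}$ is $C^2$ with second derivative $(p^*-1)|t|^{p^*-2}$. Lemma \ref{lem_nem}(i), with $q+1=p^*$, gives continuity of $u\mapsto|u|^{p^*-2}$ from $L^{p^*}$ into $L^{p^*/(p^*-2)}$. The same mean value argument, with H\"older exponents $\frac{p^*}{p^*-2},p^*,p^*$, then proves $h\in C^2$ with
\[
h''(u)[v,v]=(p^*-1)\int|u|^{p^*-2}v^2 .
\]

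Combining, $J=J_1-J_2$ is $C^2$ on $X$ and \eqref{eq_Jsecond} holds. Moreover $\Phi=p\,J_1-p^*J_2$ is also $C^2$, being a linear combination of $C^2$ functionals. The terms with $|\nabla u|^{p-4}$ in \eqref{eq_Jsecond} are read as in the convention $\mathcal M(0)=0$, which matches the convention used in Definition \ref{def_stable}. Evaluating at $u=U$ gives exactly $Q_U(v)$ with $q=p^*-1$, and this identity holds for all $v\in X$, not only for $v\in C_c^1$. I expect no serious obstacle beyond the uniform-in-$s$ control in the Fr\'echet expansion, which Lemma \ref{lem_nem} is tailored to supply.
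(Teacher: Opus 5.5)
Your proposal is correct and takes essentially the same route as the paper. The shared ingredients are the pointwise $C^1$ property of $\xi\mapsto|\xi|^{p-2}\xi$ with differential $\mathcal M$, the integral mean value formula, H\"older with exponents $\frac{p}{p-2},p,p$, the continuity of $\mathcal M$ from $L^p$ into $L^{\frac{p}{p-2}}$ given by Lemma \ref{lem_nem}(ii), and the analogous treatment of the $p^*$ term through Lemma \ref{lem_nem}(i). The only difference is cosmetic: the paper proves Gateaux differentiability of $I'$ along fixed directions and then uses that a continuous Gateaux differential is a Fr\'echet differential, whereas you bound the remainder directly for a general increment $\zeta$ (uniformity in $s$ follows from continuity at $\xi$ and $\|s\zeta\|_p\le\|\zeta\|_p$) and also check the first-order differentiability that the paper takes for granted.
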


\begin{proof}
Let $\mathcal M$ be the matrix \eqref{eq_Mdef}. We argue in three steps, for the functional $u\mapsto\int|\nabla u|^p$; the functional $u\mapsto\int|u|^{p^*}$ is treated in the same way, with $p^*>p>2$ in place of $p$.

\emph{Step 1: the bilinear form.} For $u\in X$ put
\[
B_u[v,w]=\int_{\mathbb{R}^N} \mathcal M(\nabla u)\nabla v\cdot\nabla w\,dx,\qquad v,w\in X .
\]
By \eqref{eq_Mbound} and H\"older with exponents $\frac{p}{p-2}$, $p$, $p$,
\[
\big|B_u[v,w]\big|\le(p-1)\,\|\nabla u\|_{L^p}^{p-2}\,\|\nabla v\|_{L^p}\,\|\nabla w\|_{L^p},
\]
so $B_u$ is a bounded bilinear form on $X$.

\emph{Step 2: $B$ is the Gateaux differential of $I'$, in the norm of $X^*$, and it is continuous in $u$.} Write $I(u)=\frac1p\int|\nabla u|^p$, so that $I'(u)[v]=\int|\nabla u|^{p-2}\nabla u\cdot\nabla v\,dx$. By Lemma \ref{lem_nem}(ii) the map $\eta\mapsto|\eta|^{p-2}\eta$ is of class $C^1$ on all of $\mathbb{R}^N$ with differential $\mathcal M$, so for $u,w\in X$ and $0<|t|\le1$ the fundamental theorem of calculus gives, pointwise,
\[
\frac1t\Big(|\nabla u+t\nabla w|^{p-2}(\nabla u+t\nabla w)-|\nabla u|^{p-2}\nabla u\Big)
=\int_0^1 \mathcal M(\nabla u+\tau t\nabla w)\,\nabla w\,d\tau .
\]
Let $\mathcal{A}(u):=\mathcal M(\nabla u)$. Pairing the last identity with $\nabla v$, integrating, and applying H\"older as in Step 1 together with Minkowski's integral inequality in the $\tau$ variable,
\[
\Big\|\tfrac1t\big(I'(u+tw)-I'(u)\big)-B_u[w,\cdot\,]\Big\|_{X^*}
\ \le\ \Big(\int_0^1\big\|\mathcal{A}(u+\tau tw)-\mathcal{A}(u)\big\|_{L^{\frac{p}{p-2}}}d\tau\Big)\,\|\nabla w\|_{L^p} .
\]
By Lemma \ref{lem_nem}(ii), $\mathcal A$ is continuous from $X$ into $L^{\frac{p}{p-2}}(\mathbb{R}^N)$. The integrand in the bracket therefore tends to $0$ for each $\tau$ as $t\to0$, and is bounded by $2(p-1)\big\||\nabla u|+|\nabla w|\big\|_{L^p}^{p-2}$ uniformly in $\tau$ and $|t|\le1$, so the bracket tends to $0$ by dominated convergence. Hence $I'$ is Gateaux differentiable at $u$ with differential $B_u$, and $u\mapsto B_u$ is continuous from $X$ into the space of bounded bilinear forms on $X$, again by the continuity of $\mathcal A$ and the H\"older bound of Step 1.

\emph{Step 3: conclusion.} A Gateaux differentiable map between Banach spaces whose Gateaux differential is continuous is Fr\'echet differentiable, with the same differential. Applying this to $I':X\to X^*$, Steps 1 and 2 give that $I$ is of class $C^2$ on $X$ with $I''(u)=B_u$, hence that $u\mapsto\int|\nabla u|^p=pI(u)$ is of class $C^2$ with second derivative $p\,B_u$. The corresponding statement for $u\mapsto\int|u|^{p^*}$ gives the second derivative $p^*(p^*-1)\int|u|^{p^*-2}vw$, and \eqref{eq_Jsecond} follows, as does the corresponding formula for $\Phi''$. The identification with $Q_U$ is a comparison of the two expressions, using $q-1=p^*-2$.
\end{proof}

\begin{lemma}\label{lem_nehari}
$U$ minimizes $J$ on $\mathcal{N}$, and $\Phi'(U)\ne0$.
\end{lemma}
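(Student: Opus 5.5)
The plan is to derive both claims from the fact that $U$ is a Sobolev extremal, together with the identity $\Phi(U)=0$. This identity is the energy identity $\int|\nabla U|^p=\int U^{p^*}$, which holds because $U\in X$ solves the equation; it also follows from Theorem \ref{thm_decay} applied at $q=p^*-1$.

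\emph{Minimization.} For $u\in\mathcal N$ write $a=\int|\nabla u|^p=\int|u|^{p^*}>0$. Sobolev gives $a\ge S\,a^{p/p^*}$, that is $a^{1-p/p^*}=a^{p/N}\ge S$, so $a\ge S^{N/p}$. Hence
\[
J(u)=\Big(\frac1p-\frac1{p^*}\Big)a=\frac1N\,a\ge \frac1N S^{N/p}.
\]
For $U$ itself, $\Phi(U)=0$ and $U$ attains $S$, so $\int|\nabla U|^p=S\big(\int U^{p^*}\big)^{p/p^*}$. Together with equality of the two integrals this forces $a_U=S^{N/p}$, and therefore $J(U)=\frac1N S^{N/p}=\min_{\mathcal N}J$. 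Also $U\in\mathcal N$ because $U\ne0$.

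\emph{Nondegeneracy of the constraint.} By Lemma \ref{lem_C2}, $\Phi$ is $C^1$ with
\[
\Phi'(U)[v]=p\int|\nabla U|^{p-2}\nabla U\cdot\nabla v-p^*\int U^{p^*-1}v.
\]
Test with $v=U$ to get $\Phi'(U)[U]=p\int|\nabla U|^p-p^*\int U^{p^*}=(p-p^*)\int U^{p^*}<0$. Hence $\Phi'(U)\ne0$. Equivalently, $t\mapsto\Phi(tU)=t^pa-t^{p^*}a$ has a nonzero derivative at $t=1$, so $\mathcal N$ is a $C^2$ hypersurface near $U$ with $U$ transverse to it.

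I expect no real obstacle here. The only point needing care is the membership $U\in X$ and the energy identity for $U$, which are classical for the explicit profile \eqref{eq_bubble} and can also be checked directly from its decay $|x|^{-\gamma}$.
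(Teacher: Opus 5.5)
Your proof is correct and follows the paper's argument essentially step for step. The energy identity puts $U$ on $\mathcal N$; on $\mathcal N$ the Sobolev inequality gives $\int|\nabla u|^p\ge S^{N/p}$, with equality at the extremal $U$; and testing $\Phi'(U)$ with $U$ gives $(p-p^*)\int U^{p^*}<0$. Your alternative route to $\Phi(U)=0$ through Theorem \ref{thm_decay} is also valid, but it is not needed, since $U\in X$ is itself an admissible test function.
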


\begin{proof}
Since $U\in X$ and $U$ solves the equation weakly in $X$, testing with $U$ itself is admissible and gives $\int|\nabla U|^p=\int U^{p^*}$, that is $\Phi(U)=0$, so $U\in\mathcal N$. For $u\in\mathcal N$ put $Y=\|\nabla u\|_{L^p}^p=\|u\|^{p^*}_{L^{p^*}}>0$. The Sobolev inequality gives $Y\ge S\|u\|_{L^{p^*}}^p=S\,Y^{p/p^*}$, hence
\[
Y^{1-\frac{p}{p^*}}\ge S,\qquad\text{that is}\qquad Y\ge S^{\frac{p^*}{p^*-p}}=S^{\frac Np},
\]
and $Y=S^{N/p}$ for $u=U$, because $U$ attains $S$ \cite{Talenti1976} and lies on $\mathcal N$. Since $J(u)=\big(\frac1p-\frac1{p^*}\big)Y$ on $\mathcal{N}$, the function $U$ is a global minimizer of $J$ on $\mathcal N$.

Finally
\[
\Phi'(U)[U]=p\int|\nabla U|^p-p^*\int U^{p^*}=(p-p^*)\int U^{p^*}<0 ,
\]
where we used $\int|\nabla U|^p=\int U^{p^*}$, so $\Phi'(U)\ne0$.
\end{proof}

\begin{proof}[Proof of Theorem \ref{thm_index}]
\emph{Lower bound.} The function $U$ is positive, so it has exactly one nodal region, and \cite[Thm.~1.4]{FMW2019} gives $1\le i(U)$.

\emph{Upper bound.} By Lemma \ref{lem_nehari} we may fix $e\in X$ with $\Phi'(U)[e]=1$, and $T:=\ker \Phi'(U)$ is a closed subspace of $X$ of codimension one. Let $v\in T$. Since $\Phi$ is $C^2$ by Lemma \ref{lem_C2} and $\frac{\partial}{\partial s}\Phi(U+tv+se)\big|_{t=s=0}=1$, the implicit function theorem provides $\varepsilon>0$ and a function $s\in C^2((-\varepsilon,\varepsilon))$ with $s(0)=0$ and
\[
\Phi\big(\sigma(t)\big)=0\quad\text{for }|t|<\varepsilon,\qquad \sigma(t):=U+tv+s(t)e .
\]
Differentiating $\Phi(\sigma(t))=0$ at $t=0$ gives $\Phi'(U)[v]+s'(0)=0$, hence $s'(0)=0$ because $v\in T$. Shrinking $\varepsilon$ if necessary we also have $\sigma(t)\ne0$ for $|t|<\varepsilon$, since $\sigma$ is continuous and $U\ne0$. Thus $\sigma$ is a $C^2$ curve in $\mathcal N$ with $\sigma(0)=U$ and $\sigma'(0)=v$.

By Lemma \ref{lem_nehari} the function $g(t)=J(\sigma(t))$ has a minimum at $t=0$, so $g''(0)\ge0$. On the other hand $U$ is a critical point of $J$, so $J'(U)=0$ and
\[
g''(0)=J''(U)[\sigma'(0),\sigma'(0)]+J'(U)\big[\sigma''(0)\big]=J''(U)[v,v] .
\]
Therefore
\begin{equation}\label{eq_nonneg}
Q_U(v)=J''(U)[v,v]\ \ge\ 0\qquad\text{for every }v\in T .
\end{equation}

Let now $V\subset C_c^1(\mathbb{R}^N)$ be a subspace with $Q_U(\varphi)<0$ for every $\varphi\in V\setminus\{0\}$. Since $C_c^1(\mathbb{R}^N)\subset X$, \eqref{eq_nonneg} forces $V\cap T=\{0\}$. The restriction to $V$ of the quotient map $X\to X/T$ is therefore injective, and $\dim(X/T)=1$, so $\dim V\le1$. Hence $i(U)\le1$, and with the lower bound $i(U)=1$.
\end{proof}

\begin{remark}
For $p=2$ the statement and the proof are classical, the only point where $p>2$ is used being the twice differentiability of $J$ in Lemma \ref{lem_C2}. For $1<p<2$ the weight $|\nabla u|^{p-2}$ appearing in the second derivative is unbounded where $\nabla u$ vanishes, the bound \eqref{eq_Mbound} is of no use, and the argument above does not apply. It therefore does not determine $i(u_{\lambda,x_0})$ in that range; to the best of our knowledge, the exact value is not presently available.
\end{remark}

\begin{remark}
Combining Theorems \ref{thm_sob_endpoint} and \ref{thm_index} with \cite[Thm.~1.5]{FMW2019} gives, for $2<p<N$, a complete description of the low-index part of the Sobolev endpoint without assuming finite energy in advance: a $C^1$ solution of $-\Delta_pu=|u|^{p^*-2}u$ which is stable outside a compact set and satisfies $i(u)\le1$ is either $0$ or one of the functions \eqref{eq_bubble}, and in the latter case $i(u)=1$ exactly. Higher-index sign-changing solutions are not covered by such a formula classification; Theorem \ref{thm_sob_endpoint} reduces their study to the finite-energy critical problem and bounds their number of nodal regions by their Morse index.
\end{remark}

\begin{remark}[Morse index and nondegeneracy of the bubble]
Theorem \ref{thm_index} complements the nondegeneracy theorem of Pistoia and Vaira \cite{PistoiaVaira2021}. In the natural weighted space
\[
\mathcal D_*^{1,2}(\mathbb R^N)
=\overline{C_c^1(\mathbb R^N)}^{\,\left(\int |\nabla U|^{p-2}|\nabla\phi|^2\right)^{1/2}},
\]
they proved that the kernel of the linearized equation at a positive Aubin--Talenti extremal $U$ is generated exactly by the Jacobi fields coming from the symmetries,
\[
Z_0=\frac{N-p}{p}U+x\cdot\nabla U,
\qquad Z_i=\partial_{x_i}U,\quad i=1,\ldots,N.
\]
Thus, for $2<p<N$, the bubble has Morse index one in the sense of Definition \ref{def_stable}, while its linearized kernel in $\mathcal D_*^{1,2}(\mathbb R^N)$ has dimension $N+1$ and consists precisely of the scaling and translation modes.
\end{remark}

\section*{Declarations}

\smallskip
\noindent\textbf{Conflict of interest.} The author declares that he has no conflict of interest.

\smallskip
\noindent\textbf{Data availability.} Data sharing is not applicable to this article, as no datasets were generated or analyzed during the current study.

\smallskip
\noindent\textbf{Use of AI tools.} AI-assisted tools were used during the preparation of the manuscript. The author reviewed and edited all AI-assisted output and takes full responsibility for the content of the work.

\end{document}